\documentclass[twoside,10pt]{article}
\usepackage{mathrsfs}
\usepackage{amsfonts}
\usepackage{amssymb}  
\usepackage{amsmath,amscd,graphicx}
\usepackage{paralist}  
\usepackage{amsthm}  
\makeatletter

\newcommand{\Rmnum}[1]{\expandafter\@slowromancap\romannumeral #1@}
\makeatother

\def\D{\mathcal{D}}
\def\F{\mathcal{F}}
\def\R{\mathbb{R}}
\def\T{\mathbb{T}}
\def\E{\mathcal{E}}
\def\G{\mathcal{G}}
\def\H2{H^2(\R^N)}
\def\L2{L^2(\R^N)}

\def\p{\partial}
\def\n{\nabla}

\def\nn{\n_*}
\def\na{\n_{\mathcal{A}}}
\def\nna{\n_{\mathcal{A}*}}
\def\a{\mathcal{A}}

       \textheight=240truemm
%
       \textwidth=160truemm

 \hoffset=0truemm
 \voffset=0truemm

 \topmargin=-10truemm
       \oddsidemargin=0truemm
       \evensidemargin=0truemm

       \newtheorem{lemma}{\bf Lemma}[section]
       \newtheorem{theorem}{\bf Theorem}[section]
       \newtheorem{proposition}{\bf Proposition}[section]
       
       \newtheorem{definition}{\bf Definition}[section]
       \newtheorem{remark}{\bf Remark}[section]

       \numberwithin{equation}{section}

\pagestyle{myheadings} \markboth{\rm H.Li and C.Liang}
{\rm Primitive equations with free boundary}

\begin{document}

\title{{\Large \textbf{Global existence and large time behavior for primitive equations with free boundary}}
\footnotetext{\small *Corresponding author.}
 \footnotetext{\small E-mail address: liangcc@cqu.edu.cn} 
 \footnotetext{\small Accepted for publication in \textbf{SCIENCE CHINA Mathematics}.}}

\author{{Hai-Liang Li$^{1,2}$ and Chuangchuang Liang$^3$$^\ast$}\\[2mm]
\normalsize\it $^1$School of Mathematical Sciences, Capital Normal University,\\
\normalsize\it   Beijing 100048, PR China\\
\normalsize\it $^2$Academy for Multidisciplinary Studies, Capital Normal University,\\
\normalsize\it   Beijing 100048, PR China\\
\normalsize\it $^3$College of Mathematics and Statistics, Chongqing University,\\
\normalsize\it   Chongqing 400044, PR China
}

\date{}

\maketitle

\begin{quote}
\small \textbf{Abstract}: In the present paper, the primitive equations, which can be used to simulate the large scale motion of ocean and atmosphere, are considered in the three-dimensional domain bounded below by a fixed solid boundary and above by a free moving boundary. The global existence and uniqueness of strong solutions are established and the long time convergence to the equilibrium state is showed either at exponential rate for horizontal periodic domain or at algebraic rate for horizontal whole space.

\indent \textbf{Keywords}: Primitive equations, the free boundary value problem, global well-posedness, large time behavior.

\indent \textbf{AMS (2010) Subject Classification}: 35Q30, 35R35, 35A01, 76E20, 35Q35

\end{quote}



\section{Introduction}
\par The primitive equations for the large scale dynamics of ocean and atmosphere, which were introduced by Richardson \cite{Ric} in 1922 and were applied to model the atmosphere by Smagorinsky \cite{Sma} and the ocean circulation by Bryan \cite{Bryan}, can be derived from the Navier-Stokes equations under the Boussinesq and hydrostatic approximations, refer to \cite{GH, Majda, Ped, Sal, Sma, TZ, zeng} and the reference therein. Moreover, since the vertical scale motion in the ocean and atmosphere is much smaller than the horizontal one ($10–20$ km versus several thousands of kilometers), the natural simplification of the model for the motion of ocean and atmosphere leads to the primitive equations by the so-called hydrostatic approximation.
\par The primitive equations without the influences of the thermodynamics and the salinity are given by
\begin{equation}\label{GF1}
\left\{\begin{aligned}
&\p_tv+v\cdot\nn v+w\p_3v-\Delta v+\nn P+f\vec{\kappa}\times v=0,\\
&\p_3P=-g,\\
&\nn\cdot v+\p_3w=0,
\end{aligned}\right.\end{equation}
where the vector $ v=(v^1,v^2)^T$ denotes the horizontal velocity, the scalar $ w $ is the vertical velocity, $ P $ is the pressure, \begin{equation}\label{1111}
\nn \phi:=\left(\frac{\p\phi}{\p x_1},\frac{\p\phi}{\p x_2}\right)^T=(\p_1\phi,\p_2\phi)^T,\quad \p_3\phi:=\frac{\p\phi}{\p{x_3}},\quad \Delta\phi=\p_1^2\phi+\p_2^2\phi+\p_3^2\phi
\end{equation}
for any function $\phi$
and
$$ \vec{\kappa}:=(0,0,1)^T, \qquad \vec{\kappa}\times v=(-v^2,v^1).$$
The positive constants $f$ and $g$ are coefficients of the Coriolis force and the gravity, respectively. Without loss of generality, the effects of the thermodynamics and salinity are omitted in this paper for simplicity.

\par The mathematical analysis on the primitive equations goes back to Lions, Temam and Wang \cite{LTW1, LTW2, LTW3}, where the global existence and the attractors of weak solutions were established in two and three dimensional domain. And the uniqueness of the weak solutions in some suitable spaces was proved in \cite{B,PTZ,T,KPRZ,LT}. The local well-posedness of the strong solutions to the primitive equations was investigated in \cite{GG}. The global existence of strong solutions to the primitive equations with full viscosity and diffusivity was showed in \cite{BKL} for the two-dimensional case, while the global strong solutions for the three-dimensional case were established in \cite{CT} and \cite{K} by using a different approach. An important observation in \cite{CT} is noteworthy that the unkown pressure is in fact a two-dimensional function (a function of the horizontal variables and time) and then the a-priori estimates were obtained by integrating the horizontal momentum equations over the vertical direction from the bottom to the top, which will be used later to improve the regularity of the free surface in our present paper. Recently, Cao, Li and Titi \cite{CLT1,CLT2,CLT3,CLT4,CLT5,CLT6} have made some progresses on the intial boundary value problem to the primitive equations with partial viscosity or partial heat diffusion, where the local and global well-posedness of the solutions to the cases with partial viscosity or partial diffusion was established. However, for the inviscid primitive equations with or without coupling to the temperature, it is showed in \cite{CIT,W} that the smooth solutions would blow up in finite time. The rigorous mathematical justification of the small aspect ratio from the Navier-Stokes equtions to the primitive equations, i.e., hydrostatic approximation, was studied in \cite{AG}, where the weak convergences were established. The strong convergences, which are global and uniform in time, and the convergence rate were established in \cite{LT2} and \cite{F} by different ways.

\par The systems considered in all the above are assumed to hold in the fixed domain, i.e., the domain is independent of the time. Both physically and mathematically, it is also important and necessary to study the free boundary value problem to the primitive equations. Crowley \cite{Cro} firstly treated the boundary as a free surface and considered the numerical evaluation. The coupled atmosphere and ocean model with the free interface was derived in \cite{LTWF} and the local existence for the inviscid case in the real analytic space was established in \cite{IKZ}. The free boundary value problem of primitive equations with the effects of the viscosity, thermodynamics and salinity was studied in \cite{HT1, HT2}, where the local well-posedness in the Sobolev-Sobodetski\`{\i} spaces was showed. However, there are not any results on the global existence and larger time behavior of the solutions to the free boundary value problem of the primitive equations, which is our aim in this paper.\\

\par In the present paper, we consider the global well-posedness and large time behavior of the strong solutions to the free boundary value problem of primitive equations \eqref{GF1} in the horizontal periodic domain or horizontal infinite domain
\begin{equation}\label{D11}
\Omega_t:=\{(x',x_3)|x' \in \Gamma,-b<x_3<\zeta(t,x'),\ b>0\}
\end{equation}
for the horizontal spatial domain $\Gamma:=\mathbb{T}^2\text{ or }\mathbb{R}^2$.
The conditions on the free surface will be derived from the free boundary value problem to the incompressible Navier-Stokes equations by the hydrostatic approximation.

\subsection{The derivation of the free boundary value problem}
\par The free boundary value problem for \eqref{GF1} can be derivated from the free boundary value problem of the incompressible Navier-Stokes equations under the hydrodynamic approximation (refer to \cite{TZ} and the references therein for the detail). Indeed, the incompressible Navier-Stokes equations with free surface are described as
\begin{equation}\label{DD1}
\left\{\begin{aligned}
&\p_tU_{\varepsilon}+U_{\varepsilon}\cdot\n_{y} U_{\varepsilon}-\n_{y}\cdot\mathcal{T}_{\varepsilon}+f\vec{\kappa}\times U_{\varepsilon}=-\frac{g}{\varepsilon}\vec{e}_3,\\
&\n_{y}\cdot U_{\varepsilon}=0
\end{aligned}\right.
\end{equation}
in the horizontal periodic domain or horizontal infinite domain $$\{(y',y_3)|y'\in \Gamma,-\varepsilon b<y_3<\varepsilon\eta_{\varepsilon}(t,y'),\ b>0,\varepsilon>0\}$$
with the following boundary conditions
\begin{equation}\label{DD2}
\left\{\begin{aligned}
&\mathcal{T}_{\varepsilon}\vec{\nu}_{\varepsilon}=-P_0\vec{\nu}_{\varepsilon} \quad\text{and}\quad\varepsilon\p_t\eta_{\varepsilon}+\varepsilon U_{\varepsilon}^*\cdot\nabla_{y*}\eta_{\varepsilon} =U^3_{\varepsilon}\qquad &\text{for }y_3=\varepsilon\eta_{\varepsilon}(t,y'),\\
&U_{\varepsilon}=0\qquad &\text{for }y_3=-\varepsilon b,
\end{aligned}\right.
\end{equation}
where $U_{\varepsilon}:=(U_{\varepsilon}^1,U_{\varepsilon}^2,U_{\varepsilon}^3)^T$ and $P_{\varepsilon}$ denote the velocity and the pressure of the flow respectively, $U_{\varepsilon}^*:=(U_{\varepsilon}^1,U_{\varepsilon}^2)^T$ is the horizontal velocity, the spatial derivatives $\nabla_{y}$ are given by $\nabla_{y}\phi=(\p_{y_1}\phi,\p_{y_2}\phi,\p_{y_3}\phi)^T$ and $\nabla_{y*}\phi=(\p_{y_1}\phi,\p_{y_2}\phi)^T$ for any function $\phi$, the stress tensor $\mathcal{T}_{\varepsilon}$ takes the form
$$\mathcal{T}_{\varepsilon}=-P_{\varepsilon}I+\left(\begin{matrix}
&\p_{y_1}U_{\varepsilon}^1&\p_{y_2}U_{\varepsilon}^1&\varepsilon^2\p_{y_3}U_{\varepsilon}^1\\
&\p_{y_1}U_{\varepsilon}^2&\p_{y_2}U_{\varepsilon}^2&\varepsilon^2\p_{y_3}U_{\varepsilon}^2\\
&\p_{y_1}U_{\varepsilon}^3&\p_{y_2}U_{\varepsilon}^3&\varepsilon^2\p_{y_3}U_{\varepsilon}^3
\end{matrix}\right),$$
$\frac{g}{\varepsilon}$ denotes the positive coefficient of the gravity, the positive constant $P_0$ means the pressure of the atmosphere, the hrozontal spatial domain $\Gamma:=\mathbb{T}^2 \text{ or }\mathbb{R}^2$ and the unit outward normal vector $\vec{\nu}_\varepsilon$ reads
$$\vec{\nu}_{\varepsilon}:=\frac{\left(-\varepsilon\p_{y_1}\eta_{\varepsilon},-\varepsilon\p_{y_2}\eta_{\varepsilon},1\right)^T}{\sqrt{1+\varepsilon^2|\p_{y_1}\eta_{\varepsilon}|^2+\varepsilon^2|\p_{y_2}\eta_{\varepsilon}|^2}}.$$

\par Define the scaling transform $\tilde{\varPsi}_t$ by
\begin{equation}\label{GG1}
\begin{aligned}
\tilde{\varPsi}_t:\left\{(x',x_3)|x'\in \Gamma,-b<x_3<\zeta_{\varepsilon}(x',t)\right\}&\longrightarrow\{(y',y_3)|y'\in \Gamma,-\varepsilon b<y_3<\varepsilon\eta_{\varepsilon}(t,y')\}\\
(x',x_3)&\mapsto(y',y_3)=(x',\varepsilon x_3)
\end{aligned}\end{equation}
and rescale the velocity, pressure and the free surface to
\begin{equation}\label{GG2}
\left\{\begin{aligned}
&v_{\varepsilon}(t,x',x_3)=U_{\varepsilon}^*\left(t,\tilde{\varPsi}_t(x',x_3)\right),\quad w_{\varepsilon}(t,x',x_3)=\frac{U_{\varepsilon}^3\left(t,\tilde{\varPsi}_t(x',x_3)\right)}{\varepsilon},\\
&\tilde{P}_{\varepsilon}(t,x',x_3)=P_{\varepsilon}\left(t,\tilde{\varPsi}_t(x',x_3)\right)\quad\text{and}\quad  \zeta_{\varepsilon}(t,x')=\eta_\varepsilon(t,x').
\end{aligned}\right.
\end{equation}

\par By \eqref{GG1}, we obtain the equations for \eqref{GG2} from \eqref{DD1}-\eqref{DD2} as
\begin{equation}\label{KL1}
\left\{\begin{aligned}
&\p_tv_{\varepsilon}+v_{\varepsilon}\cdot\nn v_{\varepsilon}+w_{\varepsilon}\p_3v_{\varepsilon}+\nn \tilde{P}_{\varepsilon}-\Delta v_{\varepsilon}+f\vec{\kappa}\times v_{\varepsilon}=0,\\
&\varepsilon^2\left[\p_tw_{\varepsilon}+v_{\varepsilon}\cdot\nn w_{\varepsilon}+w_{\varepsilon}\cdot\p_3w_{\varepsilon}-\Delta w_{\varepsilon}\right]+\p_3\tilde{P}_{\varepsilon}=-g,\\
&\nn\cdot v_{\varepsilon}+\p_3w_{\varepsilon}=0
\end{aligned}\right.
\end{equation}
for $(x',x_3)\in\left\{(x',x_3)|x'\in \Gamma,-b<x_3<\zeta_{\varepsilon}(t,x')\right\}$ and the boundary conditions
\begin{equation}\label{KL2}
\left\{\begin{aligned}
&\tilde{P}_{\varepsilon}\nn\zeta_{\varepsilon}+\vec{n}_{\varepsilon}\cdot\n v_{\varepsilon}=P_{0}\nn\zeta_{\varepsilon} \qquad &\text{for }x_3=\zeta_{\varepsilon}(t,x'),\\
&-\tilde{P}_{\varepsilon}+\varepsilon^2\vec{n}_{\varepsilon}\cdot\n w_{\varepsilon}=-P_{0} \qquad &\text{for }x_3=\zeta_{\varepsilon}(t,x'),\\
&\p_t\zeta_{\varepsilon}+v_{\varepsilon}\cdot\nn\zeta_{\varepsilon} =w_{\varepsilon}\qquad &\text{for }x_3=\zeta_{\varepsilon}(t,x'),\\
&v_{\varepsilon}=w_{\varepsilon}=0\qquad &\text{for }x_3=-b,
\end{aligned}\right.
\end{equation}
where the outward normal vector $\vec{n}_\varepsilon$ is
$$\vec{n}_{\varepsilon}:=(-\p_1\zeta_{\varepsilon},-\p_2\zeta_{\varepsilon},1)^T$$
and $\n=(\nn,\p_3)^T$, $\nn$ and $\Delta$ are the spatial derivatives defined in \eqref{1111}.

\par Assuming that the convergences hold as $\varepsilon$ tends to zero, i.e.,
\begin{equation*}
\begin{aligned}
\left(v_{\varepsilon},w_{\varepsilon},\tilde{P}_{\varepsilon},\zeta_{\varepsilon}\right)\rightarrow (v,w,\tilde{P},\zeta),
\end{aligned}
\end{equation*}
we formally obtain the free boundary value problem for $(v,w,\tilde{P},\zeta)$ from \eqref{KL1}-\eqref{KL2}
\begin{equation}\label{p1}
\left\{\begin{aligned}
&\p_tv+v\cdot\nn v+w\p_3v-\Delta v+\nn P+f\vec{\kappa}\times v=0,\\
&\p_3P=0,\\
&\nn\cdot v+\p_3w=0
\end{aligned}\right.\end{equation}
for $t>0$ and $x\in\Omega_t$ defined in \eqref{D11}, and the following boundary conditions
\begin{equation}\label{p2}\left\{
\begin{aligned}
&P=P_0+g\zeta\quad\text{and} \quad \vec{n}\cdot\n v=0 \qquad &\text{on }\Gamma_t,\\
&\p_t\zeta+v\cdot\nn\zeta=w\qquad&\text{on }\Gamma_t,\\
&v=w=0 \qquad&\text{on }\Sigma_b,
\end{aligned}\right.
\end{equation}
where the pressure $P$ and the outward normal vector $\vec{n}$ are defined by
$$P(t,x',x_3)=\tilde{P}(t,x',x_3)+gx_3\quad\text{and}\quad \vec{n}:=(-\p_1\zeta,-\p_2\zeta,1)^T,$$
and the free surface $\Gamma_t$ is given by
$$\Gamma_t:=\{(x',x_3)|x'\in \Gamma,x_3=\zeta(t,x')\}$$
with the horizontal spatial domain $\Gamma=\mathbb{T}^2\text{ or } \mathbb{R}^2$.
The initial data are given by
\begin{equation}\label{IIII}
v(0,x',x_3)=v_0(x',x_3) \text{ and }\zeta(0,x')=\zeta_0(x')
\end{equation}
for $x'\in \Gamma$ and $(x',x_3)\in\Omega_0$ defined by
\begin{equation}\label{ID}
\Omega_0:=\{(x',x_3)|x'\in \Gamma,-b<x_3<\zeta_0(x')\}.
\end{equation}

\par By comparing the primitive equations with free boundary, there are lots of works on the free boundary value problem to the Navier-Stokes equations. Here we mainly introduce some well-posedness results. The local well-posedness in Sobolev space to the viscous incompressible flow with the free boundary was showed in \cite{Beale2,wu}, and the global existence and large time behavior of the solution, perturbed around the constant stationary state, were studied in \cite{Hataya}. The motion of a viscous compressible barotropic fluid in $\R^3$ bounded by free surface with or without surface tension was investigated in \cite{WZ1,WZ2}. Solonnikov also did many works on the free boundary value problem of the compressible or incompressible Navier-Stokes equations, refer to \cite{Sol} and the reference therein. Recently, the free boundary value problem to the incompressible Navier-Stokes equations without surface tension in the horizontal periodic or horizontal infinite domain has been studied in \cite{GuoY1,GuoY2,GuoY3}, where the global well-posedness of the solutions was established and the long time convergence to the equilibrium state was showed either at almost exponential rate for horizontal periodic domain or at algebraic rate for horizontal whole space.

\subsection{Main results}
\par Based on the incompressible condition $\eqref{p1}_3$ and the kinematic boundary conditions $\eqref{p2}$, we can have
\begin{equation*}
\frac{d}{dt}\int_{\Gamma}\zeta(t,x')dx'=\int_{\Gamma}[-v\cdot\nn\zeta+w]\left(t,x',\zeta(t,x')\right)dx'=\int_{\Omega_t}[\nn\cdot v+\p_3w](t,x',x_3)dx=0,
\end{equation*}
where $\Gamma$ denotes either the periodic domain $\T^2$ or the whole space $\R^2$. Therefore, without loss of generality, we assume the initial datum $\zeta_0$ of $\zeta$ satisfies
\begin{equation}\label{fbass}
\int_{\Gamma}\zeta_0(x')dx'=0,
\end{equation}
so as to have
\begin{equation}\label{fbass1}
\int_{\Gamma}\zeta(t,x')dx'=\int_{\Gamma}\zeta_0(x')dx'=0.
\end{equation}

\par In the present paper, we consider the global well-posedness and large time behavior of the strong solutions to the free boundary value problem \eqref{p1}-\eqref{IIII} for primitive equations near the constant steady state
\begin{equation}\label{sc}
(\bar{v},\bar{w},\bar{\zeta})=(0,0,0)\ \text{ and }\ \bar{P}=P_0.
\end{equation}

\par To overcome the difficulties which are caused by the free surface, we will use the harmonic extension introduced by Beale \cite{Beale2} to flatten the free boundary.

\par In view of \eqref{fbass1}, we introduce the following fixed domain $\Omega$ by
$$\Omega:=\{(x',x_3)|x'\in \Gamma,-b<x_3<0\}$$
and define the flatting transform $\Psi_t$ from $\Omega$ to $\Omega_t$, such that
\begin{equation}\label{P1}
\Psi_t:(x',x_3)\in \Omega\longmapsto \Omega_t\ni(x',x_3+\theta),
\end{equation}
where the function $\theta$ is $$\theta(t,x',x_3):=\chi(x_3)\tilde{\zeta}(t,x',x_3),\qquad \text{for }(x',x_3)\in\Omega.$$
Note that $\tilde{\zeta}$ is the harmonic extention of $\zeta$ given by
\begin{equation*}
\tilde{\zeta}(t,x',x_3):=\left\{
\begin{aligned}
&\sum_{n\in \mathbb{Z}^2}e^{2i\pi n\cdot x'}e^{|n|x_3}\hat{\zeta}(t,n)\qquad\text{for } x'\in \mathbb{T}^2,\\
&\int_{\R^2}e^{2i\pi\xi\cdot x'}e^{|\xi|x_3}\hat{\zeta}(t,\xi)d\xi\qquad\text{for }x' \in \mathbb{R}^2
\end{aligned}\right.
\end{equation*}
and $\chi(x_3)$ is a cutoff function, satisfying
$$\left\{\begin{aligned}
&\chi(x_3)\in C^{\infty}_0((-b,0])\quad 0\leq \chi(x_3)\leq1,\\
&\chi(x_3)=0 \qquad \text{for }x_3\in (-b,-\frac{3b}{4}),\\
&\chi(x_3)=1\qquad \text{for }x_3\in (-\frac{b}{4},0],\\
\end{aligned}\right.$$
where $\hat{\zeta}$ denotes the Fourier transform of $\zeta$. By the definition \eqref{P1}, we know that the map $\Psi_t$ transforms the upper boundary $\Gamma$ of $\Omega$ into the free surface $\Gamma_t$ and keeps the bottom $\Sigma_{b}$ of $\Omega$ invariant.
\par Then the free boundary value problem \eqref{p1}-\eqref{IIII} is reformulated into the following initial boundary value problem
\begin{equation}\label{p3}
\left\{\begin{aligned}
&\p_tv-\p_t\theta K\p_{3}v+v\cdot\nna v+wK\p_3v-\Delta_{\mathcal{A}} v+\nna P+f\vec{\kappa}\times v=0,\\
&K\p_3P=0,\\
&\nna\cdot v+K\p_3w=0
\end{aligned}\right.\end{equation}
for $t>0$ and $x\in\Omega$ with the kinematic boundary conditions
\begin{equation}\label{p4}\left\{
\begin{aligned}
&P=P_0+g\zeta\quad\text{and}\quad  \vec{n}\cdot\na v=0 \qquad &\text{on }\Gamma,\\
&\p_t\zeta+v\cdot\nn\zeta=w\qquad&\text{on }\Gamma,\\
&v=w=0 \qquad&\text{on }\Sigma_b,
\end{aligned}\right.\end{equation}
and the initial condition
\begin{equation}\label{IC}
(v(t,x',x_3),\zeta(t,x'))|_{t=0}=\left(v_0\left(\Psi_0(x',x_3)\right),\zeta_0(x')\right)
\end{equation}
for $(x',x_3)\in\Omega$ and $\Psi_0=\Psi_t|_{t=0}$,
where we have
$$\mathcal{A}:=\left(\begin{matrix}
1&0&-AK\\
0&1&-BK\\
0&0&K
\end{matrix}\right)$$
and $A:=\p_1\theta$, $B:=\p_2\theta$ and $K=J^{-1}:=(1+\p_3\theta)^{-1}$, $$\na\phi:=\a\n\phi:=(\bar{\p_1}\phi,\bar{\p_2}\phi,\bar{\p_3}\phi)^T=(\p_1\phi-AK\p_3\phi,\p_2\phi-BK\p_3\phi,K\p_3\phi)^T$$ and $\nna\phi:=(\bar{\p_1}\phi,\bar{\p_2}\phi)^T$ for any function $\phi$. Note that we use the same symbol $\Gamma$ to denote the uper boundary of $\Omega$, i.e.
$$\Gamma:=\{(x',0)|x'\in\mathbb{T}^2\text{ or }x'\in\mathbb{R}^2\}.$$

\par Based on the equations $\eqref{p3}_{2,3}$ and the kinematic boundary conditions $\eqref{p4}_{1,3}$, we get
\begin{equation}\label{p5}
	P(t,x',x_3)=P_0+g\zeta(t,x')\quad \text{and}\quad
	w(t,x',x_3)=-\int_{-b}^{x_3}(J\nna\cdot v)(t,x',\tau)d\tau.
\end{equation}
By \eqref{p5}, we can rewrite the equations \eqref{p3} and \eqref{p4} as
\begin{equation}\label{p6}
\left\{\begin{aligned}
&\p_tv-\p_t\theta K\p_{3}v+v\cdot\nna v+wK\p_3v-\Delta_{\mathcal{A}} v+g\nn\zeta +f\vec{\kappa}\times v=0,\\
&\nna\cdot v+K\p_3w=0
\end{aligned}\right.
\end{equation}
in the fixed domain $\Omega$ with the kinematic boundary conditions
\begin{equation}\label{p61}
\left\{\begin{aligned}
&\vec{n}\cdot\na v=0\qquad &\text{on } \Gamma,\\
&\p_t\zeta+v\cdot\nn\zeta=w\qquad&\text{on }\Gamma,\\
&v=w=0 \qquad&\text{on }\Sigma_b.
\end{aligned}\right.
\end{equation}

\par To obtain the strong solution and establish the regularities to the free boundary value problem \eqref{p3}-\eqref{IC}, the compatible conditions in the horizontal periodic domain or horizontal infinite domain are required, i.e.,
\begin{equation}\label{compatibility}
\left\{\begin{aligned}
&\p_t^jv(0,\cdot)=0\qquad &\text{on }\Sigma_{b},\\
&\p_t^j\left(\vec{n}\cdot\na v\right)(0,\cdot)=0\qquad &\text{on }\Gamma
\end{aligned}\right.
\end{equation}
for $j=0,1,2$.

\par For any strong solution $(v,\zeta)$ to the free boundary value problem \eqref{p3}-\eqref{IC}, we define the energy $\E(t)$ and the dissipation $\D(t)$ as
\begin{equation}\label{DED}
\begin{aligned}
\E(t):=&\sum_{i=0}^{2}\left(\|\p_t^iv(t,\cdot)\|_{4-2i}+|\p_t^i\zeta(t,\cdot)|_{4-2i}\right),\\
\D(t):=&\sum_{i=0}^{2}\|\p_t^iv(t,\cdot)\|_{5-2i}+|\nn\zeta(t,\cdot)|_{3}+\sum_{i=1}^{3}|\p_t^i\zeta(t,\cdot)|_{\frac{7}{2}-2(i-1)},\\
\F(t):=&|\nn\zeta(t,\cdot)|_{\frac{7}{2}},\\
\G(T):=&\sup_{0\leq t\leq T}(\E^2(t)+\F^2(t))+\int_{0}^{T}\D^2(t)dt,
\end{aligned}\end{equation}
where $\|\cdot\|_s$ or $|\cdot|_r$ denotes the norms of Sobolev space $H^s(\Omega)$ or $H^r(\Gamma)$, respectively and if $s=0$ or $r=0$, it means the norm of $L^2(\Omega)$ or $L^2(\Gamma)$.

\par For the free boundary value problem \eqref{p3}-\eqref{IC} in the horizontal periodic domain, we have the following results on the global existence and long time behavior of the strong solutions.

\begin{theorem}(\textbf{Horizontal periodic domain.})\label{th1}
Assume that the initial data $(v_0,\zeta_0)\in H^4(\Omega_0)\times H^{\frac{9}{2}}(\mathbb{T}^2)$ and \eqref{fbass} and \eqref{compatibility} hold. There exists a small constant $\delta_0>0$, such that if the initial data satisfy
$$\E^2(0)+\F^2(0)\leq \delta_0,$$
then the free boundary value problem \eqref{p3}-\eqref{IC} has a unique strong global solution
$$(v,w,\zeta)\in L^\infty\left([0,\infty),H^4(\Omega)\times H^3(\Omega)\times H^{\frac{9}{2}}(\mathbb{T}^2)\right)\cap L^2\left([0,\infty),H^5(\Omega)\times H^4(\Omega)\times H^{\frac{9}{2}}(\mathbb{T}^2)\right),$$
satisfying
$$\G(t)\leq C_1(\E^2(0)+\F^2(0)),\qquad  t>0$$
for some positive constant $C_1$ independent of $\delta_0$ and the time $t$.\\
Furthermore, the solution converges exponentially to the constant steady state $(\bar{v},\bar{w},\bar{\zeta},\bar{P})=(0,0,0,P_0)$, i.e., there exists a positive constant $\gamma_0$, such that
$$\E(t)\leq \E(0) e^{-\gamma_0t},\qquad  t>0.$$
\end{theorem}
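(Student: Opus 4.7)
The overall plan follows the standard scheme of local existence, uniform a priori estimates, a continuation argument, and a coercive differential inequality for the decay.

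First, I would construct a local-in-time solution on some $[0,T_0]$ with $T_0$ depending on $\E(0)+\F(0)$. Freezing the coefficient matrix $\a$ and the flattening function $\theta$ at a previous iterate linearizes the reformulated system \eqref{p6}--\eqref{p61} into a Stokes-type problem with smooth time-dependent coefficients for $v$, coupled to the kinematic equation $\p_t\zeta+v\cdot\nn\zeta = w|_\Gamma$ for the free surface. The compatibility conditions \eqref{compatibility} ensure that the initial data for the time-differentiated system are consistent, so a Picard iteration (or Galerkin / analytic-semigroup argument) produces the local solution.

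Second, and most important, I would derive the closed a priori bound
\begin{equation*}
\G(T) \leq C\bigl(\E^2(0)+\F^2(0)\bigr) + C\,\G(T)^{3/2}
\end{equation*}
on any interval of existence. The main tool is to apply $\p_t^i$ for $i=0,1,2$ to the momentum equation in \eqref{p6} and test against $J\,\p_t^i v$. The geometric identities $K=J^{-1}$ and $\nna\cdot v + K\p_3 w = 0$ combined with the boundary relation $P = P_0 + g\zeta$ produce, after careful integration by parts,
\begin{equation*}
\frac{d}{dt}\Bigl(\tfrac12\|\sqrt{J}\,\p_t^i v\|_0^2 + \tfrac{g}{2}|\p_t^i\zeta|_0^2\Bigr) + \|\na \p_t^i v\|_0^2 = \mathcal{R}_i,
\end{equation*}
where the boundary term $g|\p_t^i\zeta|_0^2$ is the only source of $\zeta$ at the energy level and $\mathcal{R}_i$ is at least cubic in the solution. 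To recover the full $\E$ and $\D$ norms I would then upgrade these $L^2$-level bounds by Stokes-type elliptic regularity, exploiting the observation from \cite{CT} that $P$ is independent of $x_3$: integrating the horizontal momentum along $x_3$ reduces \eqref{p6} to a family of 2D Stokes systems whose forcing is already controlled by $\p_t v$ and transport terms. Finally $\F(t)=|\nn\zeta|_{7/2}$ is propagated by testing the kinematic equation at that regularity and trading pressure regularity for surface regularity via $\nn P = g\nn\zeta$ on $\Gamma$.

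Third, a bootstrap argument closes everything. For $\delta_0$ sufficiently small, the cubic remainder on the right of the a priori inequality is absorbed into the left, yielding $\G(T)\leq 2C(\E^2(0)+\F^2(0))$, which prevents blow-up and extends the solution to $[0,\infty)$. For the exponential decay, the zero-mean property \eqref{fbass1} together with the Poincar\'{e} inequality on $\T^2$ gives $|\zeta|_0\leq C|\nn\zeta|_0$, and Korn/Poincar\'{e} on $\Omega$ using $v|_{\Sigma_b}=0$ gives $\|v\|_0\leq C\|\na v\|_0$. Combining these with the energy identity at every time-derivative level converts the a priori estimate into a coercive inequality $\frac{d}{dt}\E^2(t)+\gamma\,\E^2(t)\leq 0$ in the small-data regime, and Gr\"{o}nwall yields $\E(t)\leq \E(0)e^{-\gamma_0 t}$.

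The main obstacle I expect is the top-order treatment of $\F$: the regularity $|\nn\zeta|_{7/2}$ does not appear in the parabolic dissipation $\D$, yet $\nn\zeta$ enters the $v$-equation through the gravity term $g\nn\zeta$. Controlling it requires delicate integration by parts moving the top horizontal derivatives onto factors bounded by $\D$, and careful use of the hydrostatic structure $\p_3 P=0$ to avoid losing $x_3$-regularity. The geometric errors coming from the $\a$- and $K$-transforms, while technically heavy, are of lower weight and are absorbed into the cubic remainder once $\G(T)^{1/2}$ is small.
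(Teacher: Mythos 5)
Your overall scheme (local existence, a priori energy estimates, bootstrap absorption, Poincar\'e for the decay) matches the paper's plan, and your temporal energy identity and the use of $|\zeta|_0\lesssim |\nn\zeta|_0$ via \eqref{fbass1} for the exponential rate are exactly right. However, there is a genuine gap at precisely the point you flag as the "main obstacle," and your proposed remedy for it does not work.

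You propose to control $\F(t)=|\nn\zeta|_{7/2}$ "by testing the kinematic equation at that regularity." But the kinematic equation $\p_t\zeta+v\cdot\nn\zeta=w$ controls $\p_t\zeta$, not $\zeta$ itself; moreover $w=-\int_{-b}^{x_3}(J\nna\cdot v)\,d\tau$ is one order of regularity below $v$, so the best one gets from that equation is a bound on $\p_t\zeta$ which, upon time integration, produces an estimate on $|\nn\zeta|_{7/2}$ that \emph{grows} in time (this is exactly the $(1+t)^{1/2}$ phenomenon in the Navier--Stokes free-boundary analysis of Guo--Tice, which the present paper explicitly says must be avoided). With a time-growing $\F$, your bootstrap inequality $\G(T)\le C(\E^2(0)+\F^2(0))+C\G(T)^{3/2}$ cannot be closed uniformly on $[0,\infty)$. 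The missing idea is the paper's central observation: combine the relation $P=P_0+g\zeta$ with the horizontal momentum equation, apply $J\nna\cdot$, integrate over $x_3\in(-b,0)$, and then eliminate $w|_\Gamma$ using the kinematic condition. This yields a \emph{strongly damped wave equation} for $\zeta$ on $\Gamma$,
\begin{equation*}
\p_t^2\zeta - gb\,\Delta_*\zeta - \Delta_*\p_t\zeta = \Phi,
\end{equation*}
with $\Phi$ nonlinear. Setting $\eta=(1+|\nn|)^{3/2}\nn\zeta$ and testing the resulting equation with $\p_t\eta-\Delta_*\eta$ produces a Lyapunov functional $\int_{\Gamma}\bigl[|\p_t\eta|^2-\p_t\eta\cdot\Delta_*\eta+|\Delta_*\eta|^2+gb|\nn\eta|^2\bigr]dx'$ with dissipation $\int_{\Gamma}[|\nn\p_t\eta|^2+|\Delta_*\eta|^2]dx'$, which controls $\F(t)$ uniformly in $t$. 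Without this (or an equivalent) mechanism, your argument cannot close.

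A secondary but smaller mismatch: you invoke "Stokes-type elliptic regularity" and "a family of 2D Stokes systems" to upgrade the $L^2$-level temporal bounds to full Sobolev norms. The primitive equations do not have a 3D Stokes structure (the pressure is only two-dimensional, and there is no 3D divergence-free constraint at the level of $v$ alone), and the paper does not rely on Stokes regularity. Instead the paper performs separate tangential estimates (differentiating \eqref{pl1}--\eqref{pl2} by $\p_t$, $\p_1$, $\p_2$ and testing) and normal estimates (solving for $\p_3^2v$, $\p_3^3v$, $\p_3^4v$ from the reduced equation $-\p_3^2 v_\alpha = D^\alpha G_1-\p_t v_\alpha-g\nn\zeta_\alpha+\Delta_* v_\alpha-f\vec{\kappa}\times v_\alpha$), together with the $x_3$-integrated structure to bound $w$ and $\p_t^i\zeta$. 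If you intend to fall back to Stokes regularity you should check carefully that it applies here; following the paper's direct approach is safer.
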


\begin{remark}
It should be pointed out that Guo and Tice \cite{GuoY3} considered the incompressible Navier-Stokes equations with the free surface in the horizontal periodic domain and established the global existence of the strong solution and the long time behavior to the constant equilibrium at almost exponential decay rate. However, in our case, we obtain the solution to the free boundary value problem \eqref{p3}-\eqref{IC} exponentially decays to the constant steady state. The reason is that due to the hydrostatic approximation, the vertical momentum equation is simplified to $\eqref{p3}_2$, which, by the boundary condition \eqref{p4}, immediately gives the new relation between the pressure $P$ and the free boundary $\zeta$ as
	$$P(t,x',x_3)=P_0+g\zeta(t,x').$$
	And then the regularity of $\zeta$ in the dissipation $\D(t)$ is improved half order higher than one to the case of the incompressible Navier-Stokes equations with free surface in \cite{GuoY3}.
\end{remark}

\par As for the case of horizontal infinite domain, we can establish the global existence and uniqueness of strong solution. Due to the loss of the Poincar\'e inequality on the free surface $\zeta$ in the dissipation $\D(t)$, only the algebraic decay rate is showed.

\begin{theorem}(\textbf{Horizontal whole space.})\label{th2}
Assume that the initial data $(v_0,\zeta_0)\in H^4(\Omega_0)\times H^{\frac{9}{2}}(\mathbb{R}^2)$ and \eqref{fbass} and \eqref{compatibility} hold. Then there exists a small constant $\delta_0>0$, such that if the initial data satisfy
	$$\E^2(0)+\F^2(0)\leq \delta_0,$$
then the free boundary value problem \eqref{p3}-\eqref{IC} has a unique global strong solution
$$(v,w,\nn\zeta)\in L^\infty\left([0,\infty),H^4(\Omega)\times H^3(\Omega)\times H^{\frac{7}{2}}(\mathbb{R}^2)\right)\cap L^2\left([0,\infty),H^5(\Omega)\times H^4(\Omega)\times H^{\frac{7}{2}}(\mathbb{R}^2)\right)$$
with $\zeta\in L^\infty([0,\infty),H^4(\mathbb{R}^2))$,
satisfying
	$$\G(t)\leq C_2(\E^2(0)+\F^2(0)),\qquad  t\geq0$$
for some constant $C_2>0$ independent of $\delta_0$ and the time $t$.\\
Furthermore, if the initial data also satisfy $|\nabla_*|^{-\gamma}v_0\in L^2(\Omega_0)$ and $\zeta_0\in H^{-\gamma}(\R^2)$ for some fixed constant $\gamma\in (0,1)$, then the solution converges at the algebraic rate to the steady state $(\bar{v},\bar{w},\bar{\zeta},\bar{P})=(0,0,0,P_0)$, i.e.,
	$$\E(t)\leq C_3(1+t)^{-\frac{\gamma}{2}},\qquad  t\geq0,$$
where $\Omega_0$ denotes the initial domain defined in \eqref{ID} and the constant $C_3>0$ depends only on the initial data and $\gamma$.
\end{theorem}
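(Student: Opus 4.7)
The plan is to follow the scheme used for Theorem \ref{th1} in the periodic case, with modifications to handle the loss of the Poincar\'e inequality on $\R^2$. The argument splits into three stages: local existence, a priori energy estimates closed by a continuation/bootstrap argument, and propagation of a negative horizontal Sobolev norm leading to the algebraic decay rate.

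First I would establish local existence and uniqueness of a strong solution to \eqref{p3}--\eqref{IC} through a linearized iteration scheme: at each step one freezes $\theta,A,B,K$ (determined by the previous $\zeta$-iterate via the harmonic extension), solves the resulting linear parabolic system for $v$ coupled with the transport-type boundary equation for $\zeta$, and closes by a contraction argument in the norm induced by $\E$ and $\D$. The compatibility conditions \eqref{compatibility} guarantee the required regularity at $t=0$ and identical arguments apply in $\R^2$ as in $\T^2$ at this stage.

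To extend globally, I would derive the a priori inequality
\begin{equation*}
\tfrac{d}{dt}\bigl(\E^2(t)+\F^2(t)\bigr)+c\,\D^2(t)\le C\sqrt{\E^2(t)+\F^2(t)}\,\D^2(t)
\end{equation*}
by performing tangential and time-differentiated energy estimates on \eqref{p6}--\eqref{p61}. The key structural input is the hydrostatic identity $P=P_0+g\zeta$ obtained from \eqref{p5}, which, combined with the kinematic condition $\p_t\zeta+v\cd\nn\zeta=w$ on $\Gamma$, produces the coercive boundary contribution $g\tfrac{d}{dt}|\zeta|_0^2$ when the horizontal momentum equation is tested against $v$; applying $\p_t^j$ and tangential derivatives yields the higher-order analogs that give the $\D^2$-dissipation. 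The quantity $\F(t)=|\nn\zeta|_{7/2}$ is propagated separately through the transport equation for $\zeta$ together with elliptic regularity for the harmonic extension $\tilde\zeta$ on $\Omega$, exactly as in \cite{GuoY2,GuoY3}. Smallness of $\E^2(0)+\F^2(0)$ and a standard continuation argument then yield $\G(t)\le C_2(\E^2(0)+\F^2(0))$ for all $t\ge0$.

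For the algebraic decay, the missing Poincar\'e inequality on $\R^2$ is replaced by propagating the negative horizontal norm $\mathcal{N}(t):=\||\nn|^{-\gamma}v(t)\|_{L^2(\Omega)}^2+\||\nn|^{-\gamma}\zeta(t)\|_{L^2(\R^2)}^2$. Applying $|\nn|^{-\gamma}$ to the $v$-equation in \eqref{p6} and to the kinematic boundary condition, and estimating the geometric nonlinearities by Riesz-potential product inequalities together with the already-established smallness of $\G$, one obtains $\mathcal{N}(t)\le C\mathcal{N}(0)$ uniformly in $t$. A horizontal Sobolev interpolation between $|\nn|^{-\gamma}$ and the top-order norms contained in $\D$ then gives, for some $\alpha=\alpha(\gamma)\in(0,1)$,
\begin{equation*}
\E(t)\le C\,\mathcal{N}(t)^{\alpha}\,\D(t)^{1-\alpha},
\end{equation*}
which, combined with the energy inequality $\tfrac{d}{dt}\E^2+c\D^2\le0$, reduces to an ODE inequality of the form $\tfrac{d}{dt}\E^2+c\,\E^{2/(1-\alpha)}\le0$; integration delivers $\E(t)\le C_3(1+t)^{-\gamma/2}$.

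The main obstacle will be the closure of both the energy estimate and the negative-norm bound without Poincar\'e: low horizontal frequencies of $\zeta$ (and of $\theta$ through $\tilde\zeta$) are not controlled by $\D$, so every geometric correction term such as $\int_\Omega\p_t\theta\,K\p_3v\cd(\cdots)\,dx$, or a commutator $[\a,|\nn|^{-\gamma}]$ arising in the negative-norm estimate, must be reorganized by integration by parts to expose a horizontal gradient on $\zeta$, which is then absorbed into the smallness of $\G$. Carrying this through at every regularity level, while simultaneously maintaining the uniform bound on $\mathcal{N}(t)$, is the technically delicate part of the proof.
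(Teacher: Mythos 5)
Your overall blueprint (local existence via contraction, uniform a~priori estimates closed by smallness, negative-index norm $\mathcal{N}(t)$ bounded uniformly, then interpolation to get $\E(t)\lesssim\D(t)^{\gamma/(1+\gamma)}$ and an ODE $\frac{d}{dt}\E^2+c\,\E^{2(1+\gamma)/\gamma}\le 0$) matches the paper's Section~3. However, there is a genuine gap in how you propose to control $\F(t)=|\nn\zeta|_{7/2}$. You claim to propagate $\F$ through the kinematic transport equation $\p_t\zeta+v\cd\nn\zeta=w$ together with elliptic regularity for the harmonic extension, ``exactly as in \cite{GuoY2,GuoY3}.'' This step fails for the primitive equations: because the vertical momentum equation degenerates under the hydrostatic approximation, $w$ is recovered from the incompressibility condition $w=-\int_{-b}^{x_3}(J\nna\cd v)\,d\tau$ and therefore has \emph{one horizontal derivative less} regularity than $v$, whereas in the full Navier--Stokes setting of Guo--Tice $w$ is a bona fide component of the velocity field with the same regularity as $v$. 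Thus the Guo--Tice mechanism for bounding $|\nn\zeta|_{7/2}$ from the kinematic condition is unavailable here, and your a~priori inequality does not close.

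The paper's key new idea, which your proposal misses, is to exploit the hydrostatic identity $P=P_0+g\zeta$ from \eqref{p5}: applying $J\nna\cd$ to the horizontal momentum equation, integrating in $x_3$ over $[-b,0]$, and combining with the kinematic boundary condition produces a second-order wave equation with strong damping for the free surface,
\begin{equation*}
\p_t^2\zeta-gb\,\Delta_*\zeta-\Delta_*\p_t\zeta=\Phi,
\end{equation*}
and an energy estimate on $(1+|\nn|)^{3/2}\nn\zeta$ built from this equation (Proposition~\ref{FBE}) gives a bound on $\F(t)$ that is uniform in time, which is what you need to absorb the nonlinear terms in Propositions~\ref{Tang E} and~\ref{NE}. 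Your remaining ingredients --- the negative-index energy identity for $(v_{-\gamma},\zeta_{-\gamma})$, the Gronwall argument giving $|\zeta_{-\gamma}(t)|_0\lesssim 1$, and the interpolation $|\zeta|_0\lesssim|\zeta_{-\gamma}|_0^{1/(1+\gamma)}|\nn\zeta|_0^{\gamma/(1+\gamma)}$ --- are essentially the paper's and would carry through once the $\F$-bound is repaired.
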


\par Note that the definition of $|\nn|^{-\gamma}\phi$ is given in Section 3 for any function $\phi$. Let us explain the strategies to prove the above two theorems.\\

\textbf{Difficulty and Observation.}
\emph{One of difficulties encountered in the present paper is to esitmate the norm $|\nn\zeta|_{\frac{7}{2}}$ in \eqref{DED}, which is applied to control the nonlinear terms. The same estimates are also needed under consideration of the incompressible Navier-Stokes equations with free surface in \cite{GuoY3}, which are controlled via the kinematic boundary condition
	\begin{equation}\label{DDD1}
	\p_t\zeta+v\cdot\n_*\zeta=w,
	\end{equation}
and bounded by the time increase rate $C(1+t)^{\frac{1}{2}}$ for some constant $C>0$ independent of $t$. However, in the case of the primitive equations with free surface, due to the simplified vertical momentum equation, the vertical velocity $w$ is derived by the incompressible condition $\eqref{p3}$
	$$	w(t,x',x_3)=-\int_{-b}^{x_3}(J\nna\cdot v)(t,x',\tau)d\tau,$$
which causes the regularity of the vertical velocity $w$ one order lower than the horizontal velocity $v$. So the estimate $|\nn\zeta|_{\frac{7}{2}}$ can't be obtained directly from \eqref{DDD1}.}

\par \emph{Indeed, to overcome this difficulty, we make use of the relationship $\eqref{p5}$ between the pressure $P$ and the free surface $\zeta$ and the viscosity in horizontal momentum equations, and therefore integrating over the vertical direction and combining the kinematic boundary condition \eqref{DDD1} together, we get that the free surface $\zeta$ satisfies a new wave equation with the strong dissipative term
	\begin{equation}\label{DDD2}
	\p_t^2\zeta-gb\Delta_*\zeta-\Delta_*\p_t\zeta=\text{ rest terms}.
	\end{equation}
	By the equation \eqref{DDD2}, the regularity of $\zeta$ can be improved and then the estimate $|\nn\zeta|_{\frac{7}{2}}$ is bounded by the initial data, uniformly with respect to the time $t$.}\\

\par The rest parts of the paper are arranged as follows. We first establish the uniform a-priori estimates in Section 2, and then together with the local existence in the appendix, Theorem \ref{th1} is immediately proved. In Section 3, we prove Theorem \ref{th2} and get the algebraic decay rate by the interpolation inequalities. The local existence and some useful tools are listed in the appendix.\\

\noindent\textbf{Notations.} Let $A$ and $B$ be two operators, and we denote the commutator between $A$ and $B$ by $[A,B]=AB-BA$. $a\lesssim b$ means that there exists the constant $C$ independent of the time $t$ and $\delta$, such that
$a\leq C\cdot b$. The norm of Sobolev space $H^{s}(\Omega)$ or $H^r(\Gamma)$ writes $\|\cdot\|_s$ or $|\cdot|_r$, respectively and if $s=0$ or $r=0$, it means the norm of $L^2(\Omega)$ or $L^2(\Gamma)$. $\|\cdot\|_{L^{\infty}}$ or $|\cdot|_{L^{\infty}}$ denotes the norm of $L^{\infty}(\Omega)$ or $L^{\infty}(\Gamma)$, respectively. The multi index $\alpha=(\alpha_0,\alpha_1,\alpha_2,\alpha_3)\in\mathbb{N}^4$ and $|\alpha|=2\alpha_0+\alpha_1+\alpha_2+\alpha_3$. The two multi indices $\alpha\leq\beta$ means $\alpha_i\leq\beta_i$ for $i=0,1,2,3$, where $\beta=(\beta_0,\beta_1,\beta_2,\beta_3)$. For any function $\phi$, $\phi_{\alpha}$ denotes $$\phi_{\alpha}:=D^{\alpha}\phi=\p_t^{\alpha_0}\p_1^{\alpha_1}\p_2^{\alpha_2}\p_3^{\alpha_3}\phi.$$

\section{A-priori estimates}
\par In this section, the a-priori estimates on the solution $(v,\zeta)$ to the free boundary value problem \eqref{p3}-\eqref{IC}  are obtained either in the horizontal periodic domain or in horizontal infinite domain under the assumption
\begin{equation}\label{a-priori assumption}
\sup_{0\leq t\leq T}(\E^2(t)+\F^2(t))\leq \delta
\end{equation}
for some fixed constant $T>0$ and $\delta>0$ small enough which will be determined later. And then the global existence will be obtained by combining the a-priori estimates and the local existence results together.
\begin{theorem}\label{a-priori}
Let $T>0$ and $(v,w,\zeta)$ be the strong solution to the free boundary value problem \eqref{p6}-\eqref{p61} and \eqref{IC}. Suppose the assumptions in Theorem \ref{th1} or Theorem \ref{th2} hold. Then there exists a small constant $\delta>0$, such that under the a-priori assumption \eqref{a-priori assumption} for $(v,w,\zeta)$, it holds
\begin{equation}\label{a-prior-est}
\sup_{0\leq t\leq T}(\E^2(t)+\F^2(t))+\int_{0}^{T}\D^2(t)dt\leq C(\E^2(0)+\F^2(0))
\end{equation}
and for some constant $\vartheta>0$
\begin{equation}\label{ETF}
\frac{d}{dt}\E^2(t)+\vartheta\D^2(t)\leq 0,
\end{equation}
where the positive constant $C$ and $\vartheta$ are independent of the time $T$ and the constant $\delta$.
\end{theorem}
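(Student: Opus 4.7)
The plan is a bootstrap energy argument: derive a closed differential inequality of the form $\tfrac{d}{dt}\E^2(t)+\vartheta\D^2(t)\lesssim\sqrt{\delta}\,\D^2(t)$, and invoke the a-priori smallness \eqref{a-priori assumption} to absorb the right-hand side into the dissipation, obtaining \eqref{ETF} and, by integration in time, \eqref{a-prior-est}. Throughout, all nonlinear remainders coming from the geometric coefficients $A,B,J,K$ (controlled by norms of the harmonic extension $\theta$ of $\zeta$, hence by $\E+\F$) and from the convective and commutator terms will be estimated through Sobolev embedding, trace inequalities, and product estimates in $H^s(\Omega)$ and $H^r(\Gamma)$.

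First I would perform tangential energy estimates on the $\p_t^j$-differentiated system \eqref{p6}--\eqref{p61} for $j=0,1,2$. Pairing $\p_t^j v$ (in the $J$-weighted $L^2(\Omega)$ inner product) against the momentum equation and integrating by parts, the viscous term produces the dissipation $\int_\Omega J|\na\p_t^j v|^2$, while the pressure term gives, via $P=P_0+g\zeta$ together with the kinematic boundary condition $\p_t\zeta+v\cdot\nn\zeta=w$ on $\Gamma$, an exact time derivative $\tfrac{d}{dt}\bigl(\tfrac{g}{2}|\p_t^j\zeta|_0^2\bigr)$. This yields the basic energy identity at each temporal level and recovers the lowest-order components of $\E$ and the quantities $\|\na\p_t^j v\|_0$ inside $\D$.

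To upgrade to the full spatial norms appearing in $\E$ and $\D$ (namely $\|v\|_5,\|\p_t v\|_3,\|\p_t^2 v\|_1$ and $|\nn\zeta|_3$), I would view the horizontal momentum equation as a stationary Stokes problem
\begin{equation*}
-\Delta_\a v+\nna P=-\p_t v+\p_t\theta\,K\p_3v-v\cdot\nna v-wK\p_3v-f\vec\kappa\times v,\qquad \nna\cdot v+K\p_3w=0,
\end{equation*}
with boundary data $\vec n\cdot\na v=0$, $P=P_0+g\zeta$ on $\Gamma$ and $v=0$ on $\Sigma_b$, and apply elliptic regularity iteratively. The right-hand side is bounded by previously controlled energies and by nonlinear remainders of size $\sqrt{\delta}\,\D$. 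The vertical velocity is then reconstructed by $w(t,x',x_3)=-\int_{-b}^{x_3}(J\nna\cdot v)\,d\tau$ from \eqref{p5}, which is why $w$ naturally sits one spatial order below $v$ in the statements of Theorems \ref{th1}--\ref{th2}.

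The principal obstacle is the top-order surface norm $\F=|\nn\zeta|_{7/2}$: the kinematic condition $\p_t\zeta=w-v\cdot\nn\zeta$ alone loses one derivative on $\zeta$ because of the above formula for $w$, which in the Navier--Stokes case \cite{GuoY3} only yields the growing bound $(1+t)^{1/2}$. Following the observation in the introduction, I would integrate the horizontal momentum equation in $x_3$ from $-b$ to $\zeta$, use the no-slip bottom condition, the hydrostatic relation $P=P_0+g\zeta$ and the kinematic boundary identity, to derive the strongly damped wave equation
\begin{equation*}
\p_t^2\zeta-gb\Delta_*\zeta-\Delta_*\p_t\zeta=\mathcal N,
\end{equation*}
where $\mathcal N$ is a quadratic remainder controllable by $\sqrt{\delta}\,\D$ plus lower-order surface norms. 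Testing against a tangential fractional operator of order $7$ (e.g.\ $|\nn|^{7}\p_t\zeta$) produces a coercive quadratic form in $|\nn\p_t\zeta|_{5/2}^2+gb|\Delta_*\zeta|_{5/2}^2$ whose positivity bounds $\F(t)$ uniformly in $t$ by the initial data. This is the mechanism that promotes almost-exponential decay in the Navier--Stokes case to genuine exponential decay here, and is the crucial step I expect to be the hardest part because it requires careful tracking of commutators between $\nn$, the geometric derivatives $\nna$, and the inhomogeneous coefficient $J$ at the highest regularity level.

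Combining the temporal energy estimates, the Stokes elliptic gains, and the wave estimate for $\F$, and choosing $\delta$ small enough that $\sqrt{\delta}$ is dominated by half the coercivity constant, one obtains \eqref{ETF} and, integrating on $[0,T]$, \eqref{a-prior-est}. The constants $C$ and $\vartheta$ depend only on the domain and on fixed Sobolev embedding constants, and are therefore independent of $T$ and of $\delta$.
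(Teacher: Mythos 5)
Your proposal captures the paper's central insight exactly: integrating the $x_3$--averaged momentum equation against the hydrostatic relation $P=P_0+g\zeta$ and the kinematic boundary condition to produce the strongly damped wave equation $\p_t^2\zeta-gb\Delta_*\zeta-\Delta_*\p_t\zeta=\Phi$, and then running a weighted energy estimate on it to bound $\F=|\nn\zeta|_{7/2}$ uniformly in time. Your temporal energy scheme ($\p_t^j$--differentiated system paired against $J\p_t^j v$, $j=0,1,2$) also matches the paper's Proposition~2.1. Where you diverge is in the intermediate step: you propose gaining full spatial regularity by viewing the momentum equation as a stationary Stokes problem and invoking elliptic regularity, whereas the paper works by hand, first doing \emph{tangential} estimates (differentiating in $t$ and $x_1,x_2$ only, pairing against $v_\alpha$) to control $\sum_{|\alpha|\le 4}\|D^\alpha v\|_1$, and then \emph{normal} estimates (iterating $-\p_3^2 v_\alpha=D^\alpha G_1-\p_tv_\alpha-g\nn\zeta_\alpha+\Delta_* v_\alpha-\dots$) to convert tangential control into full $H^s$ control. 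Two caveats about your route: first, this is not literally a Stokes system --- the pressure $P=P_0+g\zeta$ is known and depends only on $x'$, and $w$ is slaved to $v$ by $w=-\int_{-b}^{x_3}J\nna\cdot v$, so what you really have is a variable-coefficient elliptic problem for $v$ alone with an oblique Neumann condition on $\Gamma$ and Dirichlet on $\Sigma_b$; invoking it as ``Stokes regularity'' is likely to mislead. Second, the paper's explicit tangential/normal scheme does double duty: by multiplying the $D^\alpha$--differentiated equation against $\phi(x_3)\nn\zeta_\alpha$ (with a suitably normalized cutoff $\phi$) it extracts $g|\nn\zeta_\alpha|_0^2$, which is how the dissipation term $|\nn\zeta|_3$ in $\D$ is obtained; a black-box elliptic estimate for $v$ would not directly produce this, so you would still need an additional argument for that piece. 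Finally, on the wave estimate itself: the paper tests against \emph{both} $\p_t\eta$ and $-\Delta_*\eta$ (with $\eta=(1+|\nn|)^{3/2}\nn\zeta$) to obtain a coercive energy functional and simultaneous dissipation for $|\Delta_*\eta|^2$; your single test against a fractional power of $|\nn|$ applied to $\p_t\zeta$ gives $\F^2$ in the energy but not in the dissipation, which in fact suffices for the $\sup_t\F^2$ bound once one integrates and invokes $\int_0^T\D^2\lesssim\E^2(0)$ --- but note that the nonlinear remainder $\Phi$ contains contributions (e.g. $\p_3(J\nna\cdot v)|_{x_3=0}$, the Coriolis integral) that are merely $O(\D^2)$ rather than $O(\sqrt\delta\,\D^2)$, so they cannot be absorbed pointwise in time and must be handled by time integration; your phrase ``bounds $\F(t)$ uniformly in $t$ by the initial data'' should be read with this in mind.
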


\par In the horizontal periodic case, Theorem \ref{th1} can be immediately obtained as follows, if we first admit Theorem \ref{a-priori} holds.
\begin{proof}[\textbf{Proof of Theorem \ref{th1}.}]
Due to the mean zero conditon \eqref{fbass} on the initial surface $\zeta_0$, the mean of the free surface $\zeta$ also equals zero via the incompressible condition, seeing \eqref{fbass1}. And then Poincar\'e inequality holds in the horizontal periodic domain, which implies
$$|\zeta|_0\leq C|\nn \zeta|_0.$$
Together with the definition of $\E(t)$ and $\D(t)$, it holds
$$\E(t)\leq C\D(t)$$
with the constant $C>0$ independent of the time $t$ and $\delta$.
Combining the a-priori estimate in Theorem \ref{a-priori} and the local existence Proposition \ref{local existence} in the appendix together, the global existence is obtained and the exponential decay rate is directly got by \eqref{ETF}, which completes the proof of Theorem \ref{th1}.
\end{proof}

\par In the rest of this section, the proof of Theorem \ref{a-priori} is decomposed into four parts: temporal estimates, tangential estimates, normal estimates and estimates of the free surface.

\subsection{Temporal estimates}
\par In this subsection, we prove the temporal estimates of the horizontal velocity and the free boundary via the equations \eqref{p8} in the following.

\par We first derive a lemma on any strong solution $(U,W,\eta)$, satisfying
\begin{equation}\label{p7}
\left\{\begin{aligned}
&\p_tU-\p_t\theta K\p_{3}U+v\cdot\nna U+wK\p_3U-\Delta_{\mathcal{A}} U+g\nn\eta +f\vec{\kappa}\times U=F_1,\\
&\nna\cdot U+K\p_3W=F_2
\end{aligned}\right.
\end{equation}
in $\Omega$ with the boundary conditions
\begin{equation}\label{p71}
\left\{\begin{aligned}
&\vec{n}\cdot\na U=F_3\qquad &\text{on } \Gamma,\\
&\p_t\eta+U\cdot\nn\zeta=W+F_4\qquad&\text{on }\Gamma,\\
&U=W=0 \qquad&\text{on }\Sigma_b,
\end{aligned}\right.
\end{equation}
where $U:=(U^1,U^2)$ and $v,w$ and $\zeta$ are the solution to \eqref{p6}-\eqref{p61}.
\begin{lemma}\label{prop_t}
	Let $T>0$ and $ (U,W,\eta) $ be the regular solution to equations \eqref{p7}-\eqref{p71} for $(x,t)\in\Omega\times(0,T]$. Then, it holds
	\begin{equation*}
	\begin{aligned}
	&\frac{1}{2}\frac{d}{dt}\left[\int_{\Omega}J|U|^2dx+
	\int_{\Gamma}g|\eta|^2dx'\right]+\int_{\Omega}J|\na U|^2dx
	=\int_{\Omega}J[F_1\cdot U+ F_2\cdot g\eta]dx
	+\int_{\Gamma}[F_3\cdot U+F_4\cdot g\eta]dx'.
	\end{aligned}
	\end{equation*}	
\end{lemma}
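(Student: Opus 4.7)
The plan is to take the $L^2(\Omega)$ inner product of the first equation in \eqref{p7} with $JU$ and process each term using the Piola identity $\sum_j\partial_j(J\mathcal{A}_{ij})=0$, which yields the key integration-by-parts formula
\begin{equation*}
\int_\Omega J\vec{F}\cdot\nabla_{\mathcal{A}}g\,dx=-\int_\Omega gJ\nabla_{\mathcal{A}}\cdot\vec{F}\,dx+\int_{\partial\Omega}gJF_i\mathcal{A}_{ij}\nu_j\,dS.
\end{equation*}
A direct check using $J=1+\partial_3\theta$, $\mathcal{A}_{13}=-AK$, $\mathcal{A}_{23}=-BK$, $\mathcal{A}_{33}=K$ confirms Piola, so this formula is available throughout.

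First I would handle the \emph{material-derivative block}. Writing $\vec{V}=(v,w-\partial_t\theta)$, one has $\partial_tU-\partial_t\theta K\partial_3U+v\cdot\nabla_{\mathcal{A}*}U+wK\partial_3U=\partial_tU+\vec{V}\cdot\nabla_{\mathcal{A}}U$, so that testing with $JU$ gives $\int_\Omega JU\cdot\partial_tU\,dx+\int_\Omega J\vec{V}\cdot\nabla_{\mathcal{A}}(|U|^2/2)\,dx$. Applying the Piola formula to the second integral, the boundary term on $\Sigma_b$ vanishes because $U=0$, while on $\Gamma$ one has $\vec{\nu}=(0,0,1)$ and $\mathcal{A}^T\vec{V}\cdot\vec{\nu}=K(w-v\cdot\nabla_*\zeta-\partial_t\zeta)$, which is zero by the \textbf{kinematic boundary condition of the reference solution} $(v,w,\zeta)$ satisfying \eqref{p6}--\eqref{p61}. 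Similarly, the volume integrand reduces to $(|U|^2/2)\partial_tJ$ using $J\nabla_{\mathcal{A}*}\cdot v+\partial_3w=0$ (from $\nabla_{\mathcal{A}*}\cdot v+K\partial_3w=0$) and $\partial_3\partial_t\theta=\partial_tJ$. Adding to $\int JU\cdot\partial_tU$, this produces exactly $\frac{1}{2}\frac{d}{dt}\int_\Omega J|U|^2\,dx$.

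Next I would treat the \emph{pressure term} $g\nabla_*\eta$: since $\eta$ is independent of $x_3$, one has $\nabla_*\eta=\nabla_{\mathcal{A}*}\eta$, and Piola-integration-by-parts with $\vec{F}=(U,0)$ gives
\begin{equation*}
\int_\Omega gJ\nabla_{\mathcal{A}*}\eta\cdot U\,dx=-\int_\Omega g\eta J\nabla_{\mathcal{A}*}\cdot U\,dx+\int_\Gamma g\eta\,JU_i\mathcal{A}_{i3}\,dx'.
\end{equation*}
Use $\nabla_{\mathcal{A}*}\cdot U=F_2-K\partial_3W$ from $\eqref{p7}_2$ to extract the $\int J F_2 g\eta$ term, integrate $K\partial_3W$ vertically using $JK=1$ and $W|_{\Sigma_b}=0$ to produce $\int_\Gamma g\eta W\,dx'$, and rewrite $JU_i\mathcal{A}_{i3}=-U\cdot\nabla_*\zeta$ on $\Gamma$ (since $A=\partial_1\zeta$, $B=\partial_2\zeta$ there). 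Combining these boundary contributions and applying $\eqref{p71}_2$ in the form $W-U\cdot\nabla_*\zeta=\partial_t\eta-F_4$ yields $\frac{1}{2}\frac{d}{dt}\int_\Gamma g\eta^2\,dx'-\int_\Gamma g\eta F_4\,dx'$, which is precisely the contribution needed.

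Finally, the \emph{viscous term} $-\Delta_{\mathcal{A}}U$, applied componentwise with the Piola formula, yields the dissipation $\int_\Omega J|\nabla_{\mathcal{A}}U|^2\,dx$ together with the boundary integral $\int_\Gamma U\cdot(\vec{n}\cdot\nabla_{\mathcal{A}}U)\,dx'$ (using $\mathcal{A}_{ij}\nu_j=K\vec{n}_i$ on $\Gamma$ and $JK=1$), which equals $\int_\Gamma U\cdot F_3\,dx'$ by $\eqref{p71}_1$. The Coriolis term vanishes pointwise because $U\cdot(\vec{\kappa}\times U)=0$ by antisymmetry. Collecting all contributions gives the stated identity. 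The main subtlety to watch is keeping the boundary contributions of the reference fields $(v,w,\zeta)$ (which provide the geometry and satisfy homogeneous kinematic conditions) cleanly separate from those of $(U,W,\eta)$ (which carry the sources $F_3,F_4$); once the Piola identity is in hand, everything else is bookkeeping.
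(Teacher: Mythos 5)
Your proof is correct and fills in precisely the calculation the paper gestures at: test $\eqref{p7}_1$ against $JU$, integrate by parts via the geometric identity $\sum_j\partial_j(J\mathcal{A}_{ij})=0$, and invoke $\eqref{p7}_2$ together with the boundary conditions $\eqref{p71}$. The paper's proof is a one-line sketch that omits all details, and your decomposition into material-derivative, pressure, viscous, and Coriolis blocks—with the boundary terms on $\Gamma$ split between the reference kinematic condition for $(v,w,\zeta)$ and the perturbed condition $\eqref{p71}_2$—is the intended argument carried out in full.
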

\begin{proof}
The equality is directly obtained via multiplying $\eqref{p7}_1$ by $JU$, integrating over the domain $\Omega$ by parts, and then combining $\eqref{p7}_2$ and the boundary condition \eqref{p71} together. The details are omitted.
\end{proof}

\par By Lemma \ref{prop_t}, we have the following temporal estimates.
\begin{proposition}\label{TE}
Let $T>0$ and $(v,w,\zeta)$ be the strong solution to equations \eqref{p6}-\eqref{p61}. Suppose the assumptions in Theorem \ref{th1} or Theorem \ref{th2} hold. Then, under the a-priori assumption \eqref{a-priori assumption}, we have
\begin{equation}\label{p19}
\frac{1}{2}\frac{d}{dt}\sum_{i=0}^{2}\left[\int_{\Omega}J|\p_t^iv|^2dx+
\int_{\Gamma}g|\p_t^i\zeta|^2dx'\right]+\sum_{i=0}^{2}\int_{\Omega}J|\na \p_t^iv|^2dx\lesssim \E(t)\D(t)^2,
\end{equation}
where the energy $\E(t)$ and dissipation $\D(t)$ are defined by \eqref{DED}.
\end{proposition}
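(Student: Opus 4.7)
The strategy is to invoke Lemma \ref{prop_t} three times, with $(U,W,\eta)$ taken successively as $(\p_t^iv,\p_t^iw,\p_t^i\zeta)$ for $i=0,1,2$, and then add the three identities. For $i=0$ the triple $(v,w,\zeta)$ itself solves the unforced system \eqref{p6}-\eqref{p61}, so $F_1=F_2=F_3=F_4=0$ and the lemma returns the clean $L^2$ energy identity with no right-hand side. For $i=1,2$ I would apply $\p_t^i$ to \eqref{p6}-\eqref{p61} and match term by term against \eqref{p7}-\eqref{p71}, reading off the forcing quadruple $(F_1^{(i)},F_2^{(i)},F_3^{(i)},F_4^{(i)})$ as finite sums of Leibniz remainders produced when $\p_t^i$ passes through the geometric coefficients $A,B,K,\mathcal{A}$, through the convective operators $v\cdot\nna$, $wK\p_3$, $v\cdot\nn$, through $\p_t\theta\,K\p_3$ and $\Delta_{\mathcal{A}}$ in the bulk, and through $\vec{n}\cdot\na$ on the boundary. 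The statement \eqref{p19} then reduces to showing
\[
\int_{\Omega}J\bigl[F_1^{(i)}\cdot\p_t^iv+F_2^{(i)}\,g\p_t^i\zeta\bigr]\,dx+\int_{\Gamma}\bigl[F_3^{(i)}\cdot\p_t^iv+F_4^{(i)}\,g\p_t^i\zeta\bigr]\,dx'\lesssim \E(t)\D(t)^2
\]
for $i=1,2$.

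Each $F_j^{(i)}$ unfolds into a sum of products of time and spatial derivatives of $v,w,\zeta,\theta$. Two standard devices convert these products into estimable terms: first, because $\theta=\chi\tilde\zeta$ is a cutoff Poisson extension of $\zeta$, the harmonic-extension bound $\|\p_t^j\theta\|_{s+1/2}\lesssim|\p_t^j\zeta|_s$ allows any $\theta$-factor to be replaced by the corresponding $\zeta$-factor on $\Gamma$; second, the representation \eqref{p5} expressing $w$ as a vertical integral of $J\nna\cdot v$ exchanges a $w$-factor for a $v$-factor at the cost of one spatial order. With these in hand, the bookkeeping is routine: in every product I place the factor of maximal differential order into a $\D(t)$-norm and bound the remaining factors in $L^\infty$ through the Sobolev embeddings $H^2(\Omega)\hookrightarrow L^\infty$ and $H^{3/2}(\Gamma)\hookrightarrow L^\infty$, which places them in $\E(t)$. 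Boundary integrals on $\Gamma$ are handled by the same mechanism combined with trace theory, the $H^{1/2}(\Gamma)$ trace of $\p_t^iv$ being dominated by $\|\p_t^iv\|_{5-2i}$, hence by $\D(t)$.

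The main technical obstacle is the case $i=2$, where $\p_t^2v$ lives only at level $\|\cdot\|_0$ in $\E$ and $\|\cdot\|_1$ in $\D$, and products such as $\p_tv\cdot\nna\p_tv$ or $\p_t^2\theta\,K\p_3v$ leave essentially no slack in the derivative count. Such products must be allocated one term at a time: only the unique highest-order factor can be placed in $\D$, and when $\p_t^2\theta$ multiplies another top-order factor, the kinematic identity $\p_t\zeta=w-v\cdot\nn\zeta$ has to be differentiated to trade $\p_t^2\zeta$ for $\p_tw-\p_t(v\cdot\nn\zeta)$ before the norms are applied, so that $\p_t^2\zeta$ is measured through the component $|\p_t^2\zeta|_{3/2}$ of $\D(t)$ rather than in a space not controlled by the dissipation. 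Once each term is allocated in this manner, the right-hand side is bounded by $\E(t)\D(t)^2$ for each $i$, and summing over $i=0,1,2$ produces \eqref{p19}.
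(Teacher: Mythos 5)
Your proposal follows the paper's route exactly: differentiate \eqref{p6}--\eqref{p61} in time, apply Lemma \ref{prop_t} with $(U,W,\eta)=(\p_t^i v,\p_t^i w,\p_t^i\zeta)$ for $i=0,1,2$, observe that the $i=0$ forcing vanishes, and control the $i=1,2$ Leibniz remainders via the harmonic-extension bound for $\theta$, the vertical-integral representation \eqref{p5} for $w$, Sobolev embeddings, and trace estimates so that each product yields one factor in $\E(t)$ and two in $\D(t)$. One small point: the final maneuver you describe for the $i=2$ case --- differentiating the kinematic identity to trade $\p_t^2\zeta$ for $\p_t w - \p_t(v\cdot\nn\zeta)$ --- is not needed inside this proposition, since $|\p_t^2\zeta|_{3/2}$ (and even $|\p_t^3\zeta|_{-1/2}$) already appear as components of $\D(t)$ by definition \eqref{DED} and can simply be invoked; the paper does exactly that and relies on the a-priori smallness to absorb any extra $\E(t)$-factors. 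The trade you mention is what powers the separate normal estimates (Proposition \ref{NE}), not the temporal ones.
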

\begin{proof}
To obtain the temporal estimates for the strong solution to equations \eqref{p6}-\eqref{p61} for $(x,t)\in\Omega\times(0,T]$, we differentiate the equations \eqref{p6}-\eqref{p61} directly with respect to time $t$ to have the following equations
\begin{equation}\label{p8}
\left\{\begin{aligned}
&\p_tv_{\alpha_0}-\p_t\theta K\p_{3}v_{\alpha_0}+v\cdot\nna v_{\alpha_0}+wK\p_3v_{\alpha_0}-\Delta_{\mathcal{A}} v_{\alpha_0}+g\nn\zeta_{\alpha_0} +f\vec{\kappa}\times v_{\alpha_0}=F_1^{\alpha_0},\\
&\nna\cdot v_{\alpha_0}+K\p_3w_{\alpha_0}=F_2^{\alpha_0}
\end{aligned}\right.
\end{equation}
for $(x,t)\in\Omega\times(0,T]$ with the boundary conditions
\begin{equation}\label{p81}
\left\{\begin{aligned}
&\vec{n}\cdot\na v_{\alpha_0}=F_3^{\alpha_0}\qquad &\text{on } \Gamma,\\
&\p_t\zeta_{\alpha_0}+v_{\alpha_0}\cdot\nn\zeta=w_{\alpha_0}+F_4^{\alpha_0}\qquad&\text{on }\Gamma,\\
&v_{\alpha_0}=w_{\alpha_0}=0 \qquad&\text{on }\Sigma_b,
\end{aligned}\right.
\end{equation}
where $V_{\alpha_0}:=\p_t^{\alpha_0}V$ for any regular function $V$ with $\alpha_0=0,1,2$ and the nonliear terms $F_i^{\alpha_0}\ (i=1,\cdots,4)$ are defined by
\begin{equation} \label{N-al}
\begin{aligned}
F_1^{\alpha_0}=&\p_t^{\alpha_0}(\p_t\theta K\p_3v)-\p_t\theta K\p_3v_{\alpha_0}-\p_t^{\alpha_0}(v\cdot\nna v+wK\p_3 w)\\
&\ \ +v\cdot\nna v_{\alpha_0}+wK\p_3w_{\alpha_0}+\p_t^{\alpha_0}(\Delta_{\mathcal{A}}v)-\Delta_{\mathcal{A}}v_{\alpha_0},\\
F_2^{\alpha_0}=&-\p_t^{\alpha_0}(\nna\cdot v+K\p_3w)+\nna\cdot v_{\alpha_0}+K\p_3\p_t^{\alpha_0}w,\\
F_3^{\alpha_0}=&-\p_t^{\alpha_0}(\vec{n}\cdot\na v)+\vec{n}\cdot\na v_{\alpha_0},\\
F_4^{\alpha_0}=&-\p_t^{\alpha_0}(v\cdot\nn\zeta)+v_{\alpha_0}\cdot\nn\zeta.
\end{aligned}\end{equation}

\par Multiplying $\eqref{p8}_1$ with $Jv_{\alpha_0}$ and integrating the resulted equation by part over $\Omega$, we have by Lemma \ref{prop_t} that
\begin{equation}\label{p9}
\begin{aligned}
&\frac{1}{2}\frac{d}{dt}\left[\int_{\Omega}J|v_{\alpha_0}|^2dx+
\int_{\Gamma}g|\zeta_{\alpha_0}|^2dx'\right]+\int_{\Omega}J|\na v_{\alpha_0}|^2dx\\
=&\int_{\Omega}J[F_1^{\alpha_0}\cdot v_{\alpha_0}+ F_2^{\alpha_0}\cdot g\zeta_{\alpha_0}]dx
+\int_{\Gamma}[F_3^{\alpha_0}\cdot v_{\alpha_0}+F_4^{\alpha_0}\cdot g\zeta_{\alpha_0}]dx'\\
:=&I^{\alpha_0}_1+I^{\alpha_0}_2+I^{\alpha_0}_3+I^{\alpha_0}_4.
\end{aligned}
\end{equation}	
The nonlinear terms in the right hand side of \eqref{p9} can be estimated below for $\alpha_0=0,1,2$ respectively.

Indeed, if $\alpha_0=0$, the definition of $F^{\alpha_0}_i$ gives
$$F^{\alpha_0}_i=0 \qquad \text{for }1\leq i\leq4.$$
Then we have
\begin{equation}\label{p171}
\frac{1}{2}\frac{d}{dt}\left[\int_{\Omega}J|v|^2dx+
\int_{\Gamma}g|\zeta|^2dx'\right]+\int_{\Omega}J|\na v|^2dx=0,
\end{equation}
and since the following estimates on the nonlinear terms $I_j^1$, $j=1,2,3,4$ of \eqref{p9} hold for $\alpha_0=1$,
\begin{equation*}
\begin{aligned}
|I^1_1|\lesssim& (1+|\nn\zeta|_2)^2\left[(1+|\p_t\zeta|_2)|\p_t\zeta|_2\|v\|_4\|\p_tv\|_0+|\p_t^2\zeta|_0\|v\|_4\|\p_tv\|_0+\|v\|_4\|\p_tv\|^2_2\right]\\
\lesssim &\E(t)\D(t)^2,\\
|I^1_2|\lesssim&(1+|\nn\zeta|_2)|\p_t\zeta|_2^2\|v\|_4\lesssim \E(t)\D(t)^2,\\
|I^1_3|\lesssim&(1+|\nn\zeta|_2)|\p_t\zeta|_1\|\p_tv\|_2\|v\|_4\lesssim \E(t)\D(t)^2,\\
|I^1_4|\lesssim&\|v\|_4|\p_t\zeta|^2_0\lesssim \E(t)\D(t)^2,
\end{aligned}
\end{equation*}
we obtain
\begin{equation}\label{p17}
\frac{1}{2}\frac{d}{dt}\left[\int_{\Omega}J|v_{\alpha_0}|^2dx+
\int_{\Gamma}g|\zeta_{\alpha_0}|^2dx'\right]+\int_{\Omega}J|\na v_{\alpha_0}|^2dx\lesssim \E(t)\D(t)^2\qquad \text{for }\alpha_0=1.
\end{equation}

\par What left is to derive the expected estimates on $I_j^2$ ($j=1,2,3,4$) of \eqref{p9} for $\alpha_0=2$ as follows. The estimates of $I_1^2$ consists of three parts. By the definition of $F^2_1$, it holds
$$F_1^{2}=\sum_{i=0}^{1}\left(\begin{matrix}2\\i\end{matrix}\right)\left[\p_t^{2-i}(\p_t\theta K)\p_t^i\p_3v
-\p_t^{2-i}(v\cdot\nna)\p_t^iv-\p_t^{2-i}(wK)\p_t^i\p_3v
+\p_t^{2-i}(\Delta_{\mathcal{A}})\p_t^iv
\right],$$
and therefore we have for $0\leq i\leq 1$ that
\begin{equation}\label{p10}
\begin{aligned}
&\left|\int_{\Omega}J\p_t^{2-i}(\p_t\theta K)\p_t^i\p_3v\cdot\p_t^2vdx\right|\\
\lesssim& \|J\|_{L^{\infty}}\cdot \left(\sum_{j=0}^{1-i}\|\p_t^{3-i-j}\theta\|_0\cdot\|\p_t^jK\|_{L^{\infty}}+\|\p_t\theta\|_{L^{\infty}}\cdot\|\p_t^{2-i}K\|_{0}\right)\cdot\|\p_t^i\p_3v\|_{L^{\infty}}\cdot\|\p_t^2v\|_{0}\\
\lesssim &\E(t)\D(t)^2
\end{aligned}
\end{equation}
and
\begin{equation}\label{p11}
\begin{aligned}
&\left|\int_{\Omega}J\left(\p_t^{2-i}(v\cdot\nna)\p_t^iv+\p_t^{2-i}(wK)\p_t^i\p_3v\right)\cdot \p_t^2vdx\right|\\
\lesssim &\|J\|_{L^{\infty}} \left(\sum_{j=0}^{1-i}(\|\p_t^{2-i-j}v\|_0
+\|\p_t^{2-i-j}w\|_0)(1+\|\n\p_t^j\theta\|_{L^{\infty}})+(\|v\|_{L^{\infty}}+\|w\|_{L^{\infty}})\|\n\p_t^{2-i}\theta\|_0\right)\\
&\quad\times\|\p_t^i\n v\|_{L^{\infty}}\cdot\|\p_t^2v\|_0\\
\lesssim&\E(t)\D(t)^2,
\end{aligned}\end{equation}
where we have made use of the relation \eqref{p5} to control the vertical velocity $w$ in terms of the horizontal velocity $v$.  
Since the last term in $F^2_1$ can be rewritten as
$$\begin{aligned}
\p_t^{2-i}(\Delta_{\mathcal{A}})\p_t^iv
=&\sum_{l,k,m=0}^{3}\left[\p_t^{2-i}\left(\a_{lk}\p_k\a_{lm}\right)\p_t^i\p_mv+\p_t^{2-i}\left(\a_{lk}\a_{lm}\right)\p_t^i\p_k\p_mv\right]\\
=&\sum_{j=0}^{2-i}\sum_{l,k,m=0}^{3}\left(\begin{matrix}2-i\\j\end{matrix}\right)\left[\p_t^{2-i-j}\a_{lk}\p_t^j\p_k\a_{lm}\p_t^i\p_mv+\p_t^{2-i-j}\a_{lk}\p_t^j\a_{lm}\p_t^i\p_k\p_mv\right],
\end{aligned}$$
we have for $i=0$ that
\begin{equation}\label{D1}
\begin{aligned}
&\left|\int_{\Omega}J\p_t^2(\Delta_{\mathcal{A}})v\cdot\p_t^2vdx\right|\\
\lesssim& \|J\|_{L^{\infty}}\cdot\Bigg[\left(\|\p_t^2\n\theta\|_0\cdot\|\n^2\theta\|_{L^{\infty}}+\sum_{j=1}^{2}(\|\p_t^{2-j}\n\theta\|_{L^{\infty}}\cdot\|\p_t^{j}\n^2\theta\|_0)\right)\cdot\|\n v\|_{L^{\infty}}\\
&+\left(\|\p_t^2\n\theta\|_0\cdot(1+\|\n\theta\|_{L^{\infty}})+\sum_{j=1}^{2}(\|\p_t^{2-j}\n\theta\|_{L^{\infty}}\cdot\|\p_t^{j}\n\theta\|_0)\right)\cdot\|\n^2 v\|_{L^{\infty}}\Bigg]\cdot \|\p_t^2v\|_0\\
\lesssim&\E(t)\D(t)^2,
\end{aligned}
\end{equation}
and for $i=1$ that
\begin{equation}\label{D2}
\begin{aligned}
\left|\int_{\Omega}J\p_t(\Delta_{\mathcal{A}})\p_tv\cdot\p_t^2vdx\right|
\lesssim& \|J\|_{L^{\infty}}\cdot\Big[\sum_{j=0}^{1}\|\p_t^{1-j}\n\theta\|_{L^{\infty}}\cdot\|\n^2\p_t^j\theta\|_{L^{\infty}}\cdot\|\n\p_t v\|_0\\
&+\|\p_t\n\theta\|_{L^{\infty}}\cdot(1+\|\n\theta\|_{L^{\infty}})\cdot\|\n^2\p_t v\|_0\Big]\cdot \|\p_t^2v\|_0\\
\lesssim&\E(t)\D(t)^2.
\end{aligned}
\end{equation}
The summation of \eqref{D1} and \eqref{D2} leads to
\begin{equation}\label{p12}
\left|\int_{\Omega}\p_t^{2-i}(\Delta_{\mathcal{A}})\p_t^iv\cdot\p_t^2vdx\right|\lesssim \E(t)\D(t)^2\qquad\text{for }i=0,1.
\end{equation}
Combining \eqref{p10}, \eqref{p11} and \eqref{p12} together, we obtain
\begin{equation}\label{p13}
\left|I_1^2\right|\lesssim \E(t)\D(t)^2.
\end{equation}

\par By the definition of $F_2^2$ below
$$F_2^2=-\sum_{i=0}^{1}\left(\begin{matrix}2\\i\end{matrix}\right)\left[(\p_t^{2-i}\a\n)_*\cdot\p_t^iv+\p_t^{2-i}K\p_3\p_t^iw\right],$$
the term $I_2^2$ can be estimated by
\begin{equation}\label{p14}
\begin{aligned}
\left|I_2^2\right|
\lesssim& \|J\|_{L^{\infty}}\cdot \left(\|\n\p_t^2\theta\|_0\cdot (\|\n v\|_{L^{\infty}}+\|\n w\|_{L^{\infty}})+ \|\n\p_t\theta\|_{L^{\infty}}\cdot(\|\n \p_tv\|_0+\|\n\p_tw\|_0)\right)\cdot\|\p_t^2\zeta\|_0\\
\lesssim&\E(t)\D(t)^2.
\end{aligned}\end{equation}

\par By the definition of $F^2_3$ below
$$\begin{aligned}
F_3^{2}&=-\p_t^{2}(\vec{n}\cdot\na)v-2\p_t(\vec{n}\cdot\na)\p_tv\\
&=-\sum_{j=0}^{2}\left(\begin{matrix}2\\j\end{matrix}\right)\p_t^j\vec{n}\cdot(\p_t^{2-j}\a\n)v-2\left(\p_t\vec{n}\cdot\na+\vec{n}\cdot(\p_t\a\n)\right)\p_tv,
\end{aligned}$$
where $\vec{n}=(-\p_1\zeta,-\p_2\zeta,1)^T$, the term $I_3^2$ can be estimated by
\begin{equation}\label{p15}
\begin{aligned}
\left|I_3^2\right|
\lesssim&|J|_{L^{\infty}}\cdot\Bigg[\left(\sum_{j=0}^{1}(1+|\p_t^j\nn\zeta|_{L^{\infty}})\cdot|\p_t^{2-j}\n\theta|_0+|\p_t^2\nn\zeta|_0\cdot(1+|\n\theta|_{L^{\infty}})\right)\cdot|\n v|_{L^{\infty}}\\
&+\left(|\p_t\nn\zeta|_{L^{\infty}}\cdot(1+|\n\theta|_{L^{\infty}})
+(1+|\nn\zeta|_{L^{\infty}})\cdot|\p_t\n\theta|_{L^{\infty}}\right)\cdot|\n\p_tv|_0\Bigg]\cdot |\p_t^2v|_0\\
\lesssim &\E(t)\D(t)^2,
\end{aligned}\end{equation}
where the Sobolev embedding inequalities and Lemma \ref{SME} are used to bound the nonlinear terms.

\par Similarly, by the definition of $F^2_4$, we can control the term $I_4^2$ as
\begin{equation}\label{p16}
\begin{aligned}
\left|I_4^2\right|=&\left|\int_{\Gamma}J\cdot[-2\p_tv\cdot\nn\p_t\zeta-v\cdot\nn\p_t^2\zeta]\cdot g\p_t^2\zeta dx'\right|\\
\lesssim&
|J|_{L^{\infty}}\cdot\left[\sum_{i=0}^{1}|\p_t^iv|_{L^{\infty}}|\nn\p_t^{2-i}\zeta|_0\right]\cdot |\p_t^2\zeta|_0\\
\lesssim&\E(t)\D(t)^2.
\end{aligned}\end{equation}

\par In summary, combining \eqref{p9} with \eqref{p13}-\eqref{p16} together, we get for $\alpha_0=2$ that
\begin{equation}\label{p18}
\frac{1}{2}\frac{d}{dt}\left[\int_{\Omega}J|\p_t^2v|^2dx+
\int_{\Gamma}g|\p_t^2\zeta|^2dx'\right]+\int_{\Omega}J|\na \p_t^2v|^2dx\lesssim \E(t)\D(t)^2.
\end{equation}
By adding \eqref{p171}, \eqref{p17} and \eqref{p18} together, we can establish the temporal estimate \eqref{p19}.
\end{proof}

\subsection{Tangential estimates}
\par In this subsection, the tangential estimates on the horizontal velocity and the free boundary are established by the linearized system of equations \eqref{p6} and \eqref{p61}
\begin{equation}\label{pl1}
\left\{\begin{aligned}
&\p_tv+g\nn\zeta-\Delta v+f\vec{\kappa}\times v=G_1,\\
&\nn\cdot v+\p_3w=G_2
\end{aligned}\right.
\end{equation}
in $\Omega$ with the boundary condition
\begin{equation}\label{pl2}
\left\{\begin{aligned}
&\p_3v=G_3 \qquad& \text{on } \Gamma,\\
&\p_t\zeta=w+G_4 \qquad& \text{on } \Gamma,\\
&v=w=0\qquad& \text{on } \Sigma_b,
\end{aligned}\right.
\end{equation}
where the nonlinear terms $G_i$ are defined by
\begin{equation}\label{NDDD}
\begin{aligned}
&G_1:=\p_t\theta K\p_3v-v\cdot\nna v-wK\p_3v+\Delta_{\mathcal{A}}v-\Delta v,\\
&G_2:=-\nna\cdot v-K\p_3w+\nn\cdot v+\p_3w,\\
&G_3:=-\vec{n}\cdot\na v+\p_3v,\\
&G_4:=-v\cdot\nn \zeta.
\end{aligned}
\end{equation}
\par We have the tangential estimates about the solution to equations \eqref{pl1}-\eqref{NDDD}.
\begin{proposition}\label{Tang E}
	Let $T>0$ and $(v,w,\zeta)$ be the strong solution to equations \eqref{p6}-\eqref{p61}. Suppose the assumptions in Theorem \ref{th1} or Theorem \ref{th2} hold. Then, under the a-priori assumption \eqref{a-priori assumption}, we have
	\begin{equation}\label{pt}
	\frac{1}{2}\frac{d}{dt}\sum_{\substack{|\alpha|\leq 4\\0\leq\alpha_0<2}}\left[\int_{\Omega}|v_{\alpha}|^2dx+\int_{\Gamma}g|\zeta_{\alpha}|^2dx'\right]+\sum_{\substack{|\alpha|\leq 4\\0\leq\alpha_0<2}}\int_{\Omega}|\n v_{\alpha}|^2dx\lesssim \left(\E(t)+\F(t)\right)\D(t)^2,
	\end{equation}
	where $\E(t)$, $\D(t)$ and $\F(t)$ are defined by \eqref{DED} and $V_{\alpha}:=D^{\alpha}V=\p_t^{\alpha_0}\p_1^{\alpha_1}\p_2^{\alpha_2}V$ for any regular function $V$ with the index $\alpha:=(\alpha_0,\alpha_1,\alpha_2,0)$ satisfying $|\alpha|=2\alpha_0+\alpha_1+\alpha_2\leq 4$ and $0\leq\alpha_0<2$.
\end{proposition}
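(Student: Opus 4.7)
The plan is to apply the tangential operator $D^\alpha=\partial_t^{\alpha_0}\partial_1^{\alpha_1}\partial_2^{\alpha_2}$ with $|\alpha|\le 4$ and $\alpha_0<2$ to the linearized system \eqref{pl1}-\eqref{pl2}, take the $L^2(\Omega)$ inner product of the differentiated momentum equation with $v_\alpha$, and recover an energy identity of the schematic form
\[
\tfrac{1}{2}\tfrac{d}{dt}\bigl[\|v_\alpha\|_0^2+g|\zeta_\alpha|_0^2\bigr]+\|\n v_\alpha\|_0^2=R_\alpha,
\]
where $R_\alpha$ collects the contributions of $D^\alpha G_i$. This is possible because $D^\alpha$ involves only $t,x_1,x_2$ derivatives, hence commutes with the flat operators $-\Delta$, $g\nn$, $\nn\cdot$ and the Coriolis term appearing on the left of \eqref{pl1}-\eqref{pl2}.

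First I would integrate $-\Delta v_\alpha\cdot v_\alpha$ by parts to produce $\|\n v_\alpha\|_0^2$ together with the surface integral $-\int_\Gamma \p_3 v_\alpha\cdot v_\alpha\,dx'$; the boundary condition $\p_3 v=G_3$ on $\Gamma$ replaces $\p_3 v_\alpha$ with $D^\alpha G_3$, while the rigid-bottom condition kills the lower boundary, and the Coriolis contribution drops out pointwise since $\vec{\kappa}\times v_\alpha\perp v_\alpha$. For the ``pressure'' term I integrate $g\int_\Omega \nn\zeta_\alpha\cdot v_\alpha\,dx$ by parts in the horizontal variables and substitute the incompressibility $\nn\cdot v_\alpha=-\p_3w_\alpha+D^\alpha G_2$ from $\eqref{pl1}_2$; integrating the $\p_3$ vertically against $\zeta_\alpha$ (which is independent of $x_3$) and using $w|_{\Sigma_b}=0$ yields $g\int_\Gamma \zeta_\alpha w_\alpha\,dx'$ plus the interior error $-g\int_\Omega \zeta_\alpha\,D^\alpha G_2\,dx$. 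Finally, $D^\alpha$ applied to the kinematic boundary condition $\p_t\zeta=w+G_4$ converts the surface integral exactly into $\tfrac{1}{2}\tfrac{d}{dt}(g|\zeta_\alpha|_0^2)-g\int_\Gamma \zeta_\alpha\,D^\alpha G_4\,dx'$, producing the desired energy-dissipation structure.

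It then remains to bound every summand of $R_\alpha$ by $(\E(t)+\F(t))\D(t)^2$. Recalling \eqref{NDDD}, each $G_i$ is at least quadratic in the small quantities $(v,w,\zeta,\n\theta,K-1)$; expanding $D^\alpha G_i$ by Leibniz and distributing derivatives, I place one factor in an $L^\infty$-type Sobolev norm (controlled by $\E(t)$ via $H^2(\Omega)\hookrightarrow L^\infty$) and the remaining factors in $L^2$ (controlled by $\D(t)$). The relation $w=-\int_{-b}^{x_3}(J\nna\cdot v)\,d\tau$ from \eqref{p5} is used to trade one derivative of $w$ for one of $v$, and all derivatives of $\theta,A,B,K$ are converted into norms of $\zeta$ via the harmonic-extension bound $\|\theta\|_{H^s(\Omega)}\lesssim |\zeta|_{H^{s-1/2}(\Gamma)}$. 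The main obstacle is the top-order case $|\alpha|=4$: in commutators such as $D^\alpha(\Delta_{\a}v-\Delta v)$ and $D^\alpha(\p_t\theta K\p_3v)$ the four tangential derivatives may all land on $\theta$, so one needs $|\n\theta|_{H^{7/2}(\Omega)}$, which by the harmonic-extension estimate is exactly $|\nn\zeta|_{\frac{7}{2}}=\F(t)$; this is precisely why $\F(t)$, and not $\E(t)$ alone, appears on the right-hand side of \eqref{pt}. A careful bookkeeping of the one-in-$L^\infty$/others-in-$L^2$ pairings matches the exponents in \eqref{DED} and, after summation over admissible multi-indices $\alpha$, yields \eqref{pt}.
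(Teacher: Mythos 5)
Your energy identity and its mechanism (dissipation from $-\Delta$, surface term from $G_3$, pressure term handled via the divergence constraint and kinematic condition, Coriolis dropping out) match the paper, and you correctly identify why $\F(t)=|\nn\zeta|_{7/2}$ must appear. However, your plan for estimating the nonlinear remainders has a genuine gap at the top order $|\alpha|=4$, $\alpha_0=0$.

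Your scheme ``expand $D^\alpha G_i$ by Leibniz, put one factor in $L^\infty$, the rest in $L^2$, and pair with $\|v_\alpha\|_0$'' cannot close for the worst distribution of derivatives. Take $G_1\supset \Delta_{\a}v-\Delta v$, whose worst piece is of the form $A^2K^2\p_3^2v$. If all four tangential derivatives in $D^\alpha$ land on $v$, the factor you must place in $L^2$ is $D^4\p_3^2v$, which requires $\|v\|_6$ — not present in $\D(t)$ (which stops at $\|v\|_5$). No choice of which factor goes in $L^\infty$ fixes this, since $v_\alpha$ itself only supplies $\|v\|_4$. The same problem appears in $\int_\Gamma D^\alpha G_3\cdot v_\alpha\,dx'$: with $G_3\supset K\p_3\theta\,\p_3v$, the top-order trace term would require $|v|_{H^{5}(\Gamma)}\sim\|v\|_{11/2}$. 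The paper resolves both by a further integration by parts \emph{in a tangential direction}: writing $\int D^\alpha G_i\cdot D^\alpha v = -\int D^{\alpha-\beta}G_i\cdot D^{\alpha+\beta}v$ with $|\beta|=1$, $\beta\le\alpha$, so that the test function absorbs one extra derivative ($\|D^{\alpha+\beta}v\|_0\le\|v\|_5$, and for the boundary term $|v_{\alpha+\beta}|_{-1/2}\le\|v\|_5$), while $D^{\alpha-\beta}G_i$ now needs only three derivatives and closes with $\|v\|_5$, $|\nn\zeta|_{7/2}$, etc. Without this redistribution step — which your proposal never mentions — the estimate does not go through, so you need to add it before the bookkeeping in your last paragraph can succeed.
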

\begin{proof}
	To obtain the tangential estimates for the strong solution to equations \eqref{p6}-\eqref{p61} for $(x,t)\in\Omega\times (0,T]$, we differentiate the equations \eqref{pl1}-\eqref{pl2} directly with respect to the time $t$ and the horizontal direction $x'=(x_1,x_2)$ to have the following equations
	\begin{equation}\label{pl3}
	\left\{\begin{aligned}
	&\p_tv_{\alpha}+g\nn\zeta_{\alpha}-\Delta v_{\alpha}+f\vec{\kappa}\times v_{\alpha}=D^{\alpha}G_1\qquad &\text{in }\Omega,\\
	&\nn\cdot v_{\alpha}+\p_3w_{\alpha}=D^{\alpha}G_2\qquad &\text{in }\Omega
	\end{aligned}\right.
	\end{equation}
	and
	\begin{equation}\label{pl4}
	\left\{\begin{aligned}
	&\p_3v_{\alpha}=D^{\alpha}G_3 \qquad& \text{on } \Gamma,\\
	&v_{\alpha}=w_{\alpha}=0\qquad& \text{on } \Sigma_b,\\
	&\p_t\zeta_{\alpha}=w_{\alpha}+D^{\alpha}G_4 \qquad& \text{on } \Gamma,
	\end{aligned}\right.
	\end{equation}
	where $\alpha:=(\alpha_0,\alpha_1,\alpha_2,0)$ satisfies $|\alpha|\leq 4$ and $0\leq\alpha_0<2$.
	
	\par Multiplying $\eqref{pl3}_1$ with $v_{\alpha}$ and integrating the resulted equation by parts over the domain $\Omega$, we get
	\begin{equation}\label{pl5}
	\begin{aligned}
	&\frac{1}{2}\frac{d}{dt}\left[\int_{\Omega}|v_{\alpha}|^2dx+\int_{\Gamma}g|\zeta_{\alpha}|^2dx'\right]+\int_{\Omega}|\n v_{\alpha}|^2dx\\
	=&\int_{\Omega}\left[D^{\alpha}G_1\cdot v_{\alpha}+D^{\alpha}G_2\cdot g\zeta_{\alpha}\right]dx+\int_{\Gamma}\left[D^{\alpha}G_3\cdot v_{\alpha}+D^{\alpha}G_4\cdot g\zeta_{\alpha}\right]dx'\\
	:=&II^{\alpha}_1+II^{\alpha}_2+II^{\alpha}_3+II^{\alpha}_4.
	\end{aligned}\end{equation}
	The nonlinear terms in the right hand side of \eqref{pl5} can be estimated below for $\alpha_0=0$ and $\alpha_0=1$ respectively.
	
\par If $\alpha_0=0$ and $1\leq\alpha_1+\alpha_2\leq 4$, the term $II^{\alpha}_1$ is bounded by the definition of $G_1$ in \eqref{NDDD}
$$II^{\alpha}_1
=\int_{\Omega}D^{\alpha}\left[\p_t\theta K\p_3v-v\cdot\nna v-wK\p_3v\right]\cdot v_{\alpha}dx+\int_{\Omega}D^{\alpha}\left[\Delta_{\mathcal{A}}v-\Delta v\right]\cdot v_{\alpha}dx:=II_5+II_6,$$
and integrating by parts, we get for any $1\leq\alpha_1+\alpha_2\leq 4$
\begin{equation}\label{pl6}
\begin{aligned}
|II_5|&=\left|-\int_{\Omega}D^{\alpha+\beta}v\cdot D^{\alpha-\beta}\left[\p_t\theta K\p_3v-v\cdot\nna v-wK\p_3v\right]dx\right|\\
&\lesssim \|D^{\alpha+\beta}v\|_0\cdot\|\p_t\theta K\p_3v-v\cdot\nna v-wK\p_3v\|_3\\
&\lesssim \E(t)\D(t)^2,
\end{aligned}
\end{equation}
where the index $\beta=(0,\beta_1,\beta_2,0)$ satisfies
\begin{equation}\label{K1}
\beta\leq \alpha\text{ and }|\beta|=1.
\end{equation}
Note that by the relation \eqref{p5}, the vertical velocity $w$ can be bounded as
$$\|w\|_3=\left\|\int_{-b}^{x_3}(J\nna\cdot v)(t,x',s)ds\right\|_3
\lesssim\|J\nna\cdot v\|_3\lesssim (1+|\zeta|_4)^2\|v\|_4\lesssim\|v\|_4.$$

To derivate the expected estimates on $II_6$, by direct computation, we have
\begin{equation}\label{t1}
\begin{aligned}
\Delta_{\mathcal{A}}v-\Delta v=&-\p_1(AK)\p_3v-2AK\p_1\p_3v+AK\p_3(AK)\p_3v+A^2K^2\p_3^2v-\p_2(BK)\p_3v\\
&-2BK\p_2\p_3v+BK\p_3(BK)\p_3v+B^2K^2\p_3^2v-K^3\p_3^2\theta\p_3v-K^2(2\p_3\theta+|\p_3\theta|^2)\p_3^2v,
\end{aligned}\end{equation}
and therefore $II_6$ is estimated by
\begin{equation}\label{pl7}\begin{aligned}
|II_6|=&\left|-\int_{\Omega}D^{\alpha+\beta}v\cdot D^{\alpha-\beta}\left(\Delta_{\mathcal{A}}v-\Delta v\right)dx\right|
\lesssim \|D^{\alpha+\beta}v\|_0\cdot\|\Delta_{\mathcal{A}}v-\Delta v\|_3\\
\lesssim&\|v\|_5\cdot\left(\|\n\theta\|_4\cdot\|v\|_4+\|\theta\|_4\cdot\|v\|_5\right)\cdot(1+\|\theta\|_4)^3\\
\lesssim& \left(\E(t)+\F(t)\right)\D(t)^2,
\end{aligned}\end{equation}
where the multi index $\beta$ is defined in \eqref{K1}.
Adding \eqref{pl6} and \eqref{pl7} together, we have
\begin{equation}\label{pe1}
\left|II^{\alpha}_1\right|=|II_5+II_6|\lesssim \left(\E(t)+\F(t)\right)\D(t)^2\quad \text{for } \alpha_0=0 \text{ and }1\leq\alpha_1+\alpha_2\leq 4.
\end{equation}

\par By the definition of $G_2$ in \eqref{NDDD} and the relation \eqref{p5}, it holds
$$G_2=AK\p_3v^1+BK\p_3v^2-\p_3\theta\nna\cdot v,$$
and then we get that for $1\leq\alpha_1+\alpha_2\leq4$
\begin{equation}\label{pe2}
\begin{aligned}
\left|II^{\alpha}_2\right|&\lesssim \|G_2\|_4\cdot|\nn\zeta|_3\\
&\lesssim
\left(\|\n\theta\|_4\cdot\|v\|_4+\|\theta\|_4\cdot\|v\|_5\right)(1+\|\theta\|_4)\cdot|\nn\zeta|_3\\
&\lesssim \left(\E(t)+\F(t)\right)\D(t)^2.
\end{aligned}\end{equation}

\par To derive the estimate on the term $II^{\alpha}_3$, the definition of $G_3$ in \eqref{NDDD} gives
\begin{equation}\label{C1}
G_3=\nn\zeta\cdot\nn v+K(-|\nn\zeta|^2+\p_3\theta)\p_{3}v,
\end{equation}
and then integrating by parts, by Lemma \ref{SME} in the appendix, we obtain that for $1\leq\alpha_1+\alpha_2\leq4$
\begin{equation}\label{pe3}
\begin{aligned}
\left|II^{\alpha}_3\right|&=\left|-\int_{\Gamma}D^{\alpha-\beta}G_3\cdot D^{\alpha+\beta}vdx'\right|\lesssim |D^{\alpha-\beta}G_3|_{\frac{1}{2}}\cdot|v_{\alpha+\beta}|_{-\frac{1}{2}}\\
&\lesssim \left(|\nn\zeta|_{\frac{7}{2}}\cdot\|v\|_5+|\zeta|_{4}\cdot\|v\|_4\right)\cdot(1+|\zeta|_4)^2\cdot \|v\|_5\\
&\lesssim\left(\E(t)+\F(t)\right)\D(t)^2,
\end{aligned}\end{equation}
where the multi-index $\beta$ is defined in \eqref{K1}.

\par To estimate the term $II^{\alpha}_4$, by the definition of $G_4$ in \eqref{NDDD}, we have
$$D^{\alpha}G_4=-\sum_{\gamma<\alpha}C_{\alpha,\gamma}D^{\alpha-\gamma}v\cdot\nn D^{\gamma}\zeta-v\cdot\nn D^{\alpha}\zeta:=II_7+II_8,$$
where $C_{\alpha,\gamma}$ is the constant only depending on the multi indices $\alpha$ and $\gamma$. Then we get that for $1\leq\alpha_1+\alpha_2\leq4$
\begin{equation*}
\left|\int_{\Gamma}II_7\cdot g\zeta_{\alpha}dx' \right|\lesssim \|v\|_5\cdot|\nn\zeta|_3\cdot|\zeta_{\alpha}|_0\lesssim \E(t)\D(t)^2
\end{equation*}
and via integrating by parts
$$\left|\int_{\Gamma}II_8\cdot D^{\alpha}\zeta dx'\right|=\left|-\frac{1}{2}\int_{\Gamma}\nn\cdot v |D^{\alpha}\zeta|^2dx'\right|\lesssim \|v\|_4\cdot|\nn\zeta|_3^2\lesssim \E(t)\D(t)^2,$$
so adding these two inequalities together, we get
\begin{equation}\label{pe4}
\left|II^{\alpha}_4\right|\lesssim \E(t)\D(t)^2.
\end{equation}

\par Combining \eqref{pe1}, \eqref{pe2}, \eqref{pe3} and \eqref{pe4} together, we obtain
\begin{equation}\label{pt0}
\frac{1}{2}\frac{d}{dt}\left[\int_{\Omega}|v_{\alpha}|^2dx+\int_{\Gamma}g|\zeta_{\alpha}|^2dx'\right]+\int_{\Omega}|\n v_{\alpha}|^2dx\lesssim \left(\E(t)+\F(t)\right)\D(t)^2
\end{equation}
for $\alpha=(0,\alpha_1,\alpha_2,0)$ and $1\leq\alpha_1+\alpha_2\leq 4$.

\par If $\alpha_0=1$ and $1\leq\alpha_1+\alpha_2\leq 2$, by the definition of $G_1$ in \eqref{NDDD}, we have
$$II^{\alpha}_1=\int_{\Omega}D^{\alpha}\left[\p_t\theta K\p_3v-v\cdot\nna v-wK\p_3v\right]\cdot v_{\alpha}dx+\int_{\Omega}D^{\alpha}\left[\Delta_{\mathcal{A}}v-\Delta v\right]\cdot v_{\alpha}dx:=II_9+II_{10}.$$
Then integrating by parts, we get
\begin{equation}\label{t11}
\begin{aligned}
|II_9|&\lesssim \|D^{\alpha+\beta}v\|_0\cdot\|\p_t[\p_t\theta K\p_3v-v\cdot\nna v-wK\p_3v]\|_1\\
&\lesssim \E(t)\D(t)^2
\end{aligned}
\end{equation}
and
\begin{equation}\label{t12}
\begin{aligned}
|II_{10}|\lesssim& \|D^{\alpha+\beta}v\|_0\cdot\|\p_t(\Delta_{\mathcal{A}}v-\Delta v)\|_1\\
\lesssim&\|\p_tv\|_3\cdot\left(\|\p_t\theta\|_4\cdot\|v\|_4+\|\theta\|_4\cdot\|\p_tv\|_3\right)\cdot(1+\|\theta\|_4)^3\\
\lesssim& \E(t)\D(t)^2,
\end{aligned}
\end{equation}
where the multi index $\beta$ is defined in \eqref{K1}.
So adding \eqref{t11} and \eqref{t12} together, we obtain
\begin{equation}\label{t111}
\left|II^{\alpha}_1\right|=|II_9+II_{10}|\lesssim\E(t)\D(t)^2.
\end{equation}

\par By the direct computation and the definitions of the nonlinear terms $G_i$ \eqref{NDDD}, the term $II^{\alpha}_2$, $II^{\alpha}_3$ and $II^{\alpha}_4$ are bounded respectively as
\begin{equation}\label{t112}
\begin{aligned}
\left|II^{\alpha}_2\right|&\lesssim \|\p_tG_2\|_2\cdot|\p_t\zeta|_2\\
&\lesssim
\left(\|\p_t\theta\|_3\cdot\|v\|_4+\|\theta\|_4\cdot\|\p_tv\|_3\right)(1+\|\theta\|_4)\cdot|\p_t\zeta|_2\\
&\lesssim \E(t)\D(t)^2,
\end{aligned}\end{equation}
\begin{equation}\label{t113}
\begin{aligned}
\left|II^{\alpha}_3\right|&=\left|-\int_{\Gamma}D^{\alpha-\beta}G_3\cdot D^{\alpha+\beta}vdx'\right|\lesssim |D^{\alpha-\beta}G_3|_{\frac{1}{2}}\cdot|v_{\alpha+\beta}|_{-\frac{1}{2}}\\
&\lesssim \left(|\p_t\zeta|_{\frac{5}{2}}\cdot\|v\|_4+|\nn\zeta|_{3}\cdot\|\p_tv\|_3\right)\cdot(1+|\zeta|_4)^2\cdot \|\p_tv\|_3\\
&\lesssim\E(t)\D(t)^2,
\end{aligned}\end{equation}
and
\begin{equation}\label{t114}
\begin{aligned}
\left|II^{\alpha}_4\right|&\lesssim |\p_tG_4|_2\cdot|\p_t\zeta|_2\\
&\lesssim\left(\|\p_tv\|_3\cdot|\nn\zeta|_2+\|v\|_3\cdot|\p_t\zeta|_3\right)\cdot|\p_t\zeta|_2\\
&\lesssim\E(t)\D(t)^2,
\end{aligned}
\end{equation}
where the index $\beta$ is defined in \eqref{K1}.
So adding the estimates \eqref{t111}, \eqref{t112}, \eqref{t113} and \eqref{t114} together, we obtain that
\begin{equation}\label{pt1}
\frac{1}{2}\frac{d}{dt}\left[\int_{\Omega}|v_{\alpha}|^2dx+\int_{\Gamma}g|\zeta_{\alpha}|^2dx'\right]+\int_{\Omega}|\n v_{\alpha}|^2\lesssim \E(t)\D(t)^2
\end{equation}
for $\alpha=(1,\alpha_1,\alpha_2,0)$ and $1\leq\alpha_1+\alpha_2\leq 2$.

\par In summary, the tangential estimates \eqref{pt} is obtaind by adding \eqref{pt0} and \eqref{pt1} together.
\end{proof}

\subsection{Normal estimates}
\par In this subsection, we establish the normal estimates of the solution to the linearized equations \eqref{pl1}-\eqref{pl2}.
We firstly give an useful lemma to bound the nonlinear term $G_1$ in equations \eqref{pl1}, which is achieved by a direct computation using the Sobolev embedding inequalities and Lemma \ref{SME}. The proof is omitted here.
\begin{lemma}\label{pln1}
	For any index $\alpha=(\alpha_0,\alpha_1,\alpha_2,\alpha_3)$ satisfying $|\alpha|=2\alpha_0+\alpha_1+\alpha_2+\alpha_3\leq 2$, we have
	\begin{equation*}
	\begin{aligned}
	&\|D^{\alpha}G_1\|_0\lesssim \E(t)^2,\\
	&\|D^{\alpha}G_1\|_1\lesssim \left[\E(t)+\F(t)\right]\D(t),
	\end{aligned}
	\end{equation*}
	where $\E(t)$, $\D(t)$ and $\F(t)$ are defined by \eqref{DED}.
\end{lemma}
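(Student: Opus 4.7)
The plan is to expand $G_1$ into its five constituent pieces—$\p_t\theta\,K\p_3v$, $v\cdot\nna v$, $wK\p_3v$, and the quasilinear remainder $\Delta_{\mathcal{A}}v-\Delta v$ whose explicit ten-term expansion is recorded in \eqref{t1}—and apply the operator $D^{\alpha}$ via Leibniz. Every term is at least quadratic in the unknowns $(v,w,\zeta,\theta)$, so one obtains a finite sum of binary and ternary products in the derivatives of these variables. The key ingredients I will use are (i) the harmonic-extension bound $\|\theta\|_k\lesssim|\zeta|_{k-1/2}$, (ii) the vertical-velocity recovery formula \eqref{p5}, which yields $\|w\|_k\lesssim(1+\|\theta\|_k)\|v\|_k$ just as in the proof of Proposition~\ref{Tang E}, (iii) the Sobolev embedding $H^2(\Omega)\hookrightarrow L^\infty(\Omega)$ together with its trace analogues on $\Gamma$, and (iv) the product estimate of Lemma \ref{SME}.

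For the $L^2$ estimate, the derivative budget $|\alpha|=2\alpha_0+\alpha_1+\alpha_2+\alpha_3\leq 2$ distributes over each binary product with at most two derivatives in total. I will split each factor so that the lower-order factor is placed in $L^\infty$ via $H^2\hookrightarrow L^\infty$, while the other absorbs the remaining derivatives in $L^2$. All norms that then appear—$\|v\|_k$ and $\|\p_t^jv\|_k$ with $2j+k\leq 4$, the corresponding $|\zeta|_k$ and $|\p_t^j\zeta|_k$, together with $\|w\|_k\lesssim\|v\|_k$ and $\|\theta\|_k\lesssim|\zeta|_{k-1/2}$—all sit inside $\E(t)$. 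Summing up these quadratic contributions yields $\|D^{\alpha}G_1\|_0\lesssim\E(t)^2$.

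For the $H^1$ estimate, one extra derivative must be absorbed. The idea is to place the top derivative on the unique factor whose higher norm lies in $\D(t)$—on $v$ (producing $\|v\|_5$ or $\|\p_tv\|_3$ or $\|\p_t^2v\|_1$), on $\zeta$ (producing $|\nn\zeta|_3$ or $|\p_t^j\zeta|_{7/2-2(j-1)}$), or on $\theta$ when $\|\n\theta\|_4$ is needed. In the last case the harmonic-extension identity gives $\|\n\theta\|_4\lesssim|\nn\zeta|_{7/2}=\F(t)$, which is precisely the place where $\F(t)$ enters the right-hand side; the fractional norm cannot be recovered from $\E$ alone. All remaining factors are bounded by the $L^\infty$-type norms already controlled inside $\E(t)$, so every product of the expansion contributes at most $(\E(t)+\F(t))\D(t)$.

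The main technical obstacle will be the quasilinear piece $\Delta_{\mathcal{A}}v-\Delta v$ from \eqref{t1}, whose coefficients depend nonlinearly on $\theta$, $\n\theta$, $\p_3\theta$, and the compounds $AK,BK,K^2$. When two derivatives fall on those coefficients one must carefully verify that $\|\n^2\theta\|$-type quantities (and their time-derivative analogues) are indeed covered by $\E(t)+\F(t)$, and that when the extra $H^1$ derivative lands on $v$ it creates only norms in $\D(t)$ such as $\|v\|_5$, $\|\p_tv\|_3$. This case split—on which factor receives the top derivative and whether $\theta$ must be paid for in $\F$ rather than $\E$—is the only real bookkeeping; once it is performed systematically for each of the ten summands of \eqref{t1} and each of the three lower-order pieces of $G_1$, both advertised bounds follow by Lemma \ref{SME} and the Sobolev embeddings, with no further ideas required.
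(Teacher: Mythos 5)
Your plan---expand $G_1$ termwise, apply $D^{\alpha}$ by Leibniz, and close with $H^2(\Omega)\hookrightarrow L^\infty(\Omega)$, the product estimates of Lemma~\ref{SME}, the harmonic-extension bound for $\theta$, and the vertical-velocity recovery \eqref{p5}, putting the top derivative on the factor whose higher norm sits in $\D(t)$ (or paying in $\F(t)$ when $\|\n\theta\|_4$ is forced)---is precisely the ``direct computation using the Sobolev embedding inequalities and Lemma~\ref{SME}'' that the paper invokes and then omits. Two small corrections: $G_1$ splits into four pieces, not five, and the vertical-velocity bound should read $\|w\|_k\lesssim(1+\|\theta\|_k)\|v\|_{k+1}$ (one order higher on $v$, matching the paper's $\|w\|_3\lesssim\|v\|_4$), though this does not affect the bookkeeping since in every product $w$ can be placed in the lower-order slot.
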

\par By Lemma \ref{pln1}, we have the following normal estimates.
\begin{proposition}\label{NE}
	Let $T>0$ and $(v,w,\zeta)$ be the strong solution to equations \eqref{p6}-\eqref{p61}. Suppose the assumptions in Theorem \ref{th1} or Theorem \ref{th2} hold. Then, under the a-priori assumption \eqref{a-priori assumption}, we have
	\begin{equation}\label{pnnn1}
	\sum_{i=0}^{1}\|\p_t^iv\|_{4-2i}\lesssim \E(t)^{3/2}+\sum_{|\alpha|\leq 4}\left(\|D^{\alpha}v\|_0+|D^{\alpha}\zeta|_0\right)
	\end{equation}
	and
	\begin{equation}\label{pnnn2}
	\sum_{i=0}^{2}\|\p_t^iv\|_{5-2i}+|\nn\zeta|_{3}+\sum_{i=1}^{3}|\p_t^i\zeta|_{\frac{11}{2}-2i}\lesssim \left(\E(t)^{1/2}+\F(t)^{1/2}\right)\D(t)+\sum_{|\alpha|\leq 4}\|D^{\alpha}v\|_1,
	\end{equation}
	where $\alpha=(\alpha_0,\alpha_1,\alpha_2,0)$.
\end{proposition}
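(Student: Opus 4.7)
The core idea is to treat the linearized momentum equation $\eqref{pl1}_1$ as an elliptic system for $v$ (and its time derivatives) on the fixed domain $\Omega$, with mixed boundary data $\p_3 v = G_3$ on $\Gamma$ and Dirichlet condition $v=0$ on $\Sigma_b$. Rewriting the equation in the form
\begin{equation*}
-\Delta v = G_1 - \p_t v - g\nn\zeta - f\vec{\kappa}\times v,
\end{equation*}
standard elliptic regularity for the scalar Laplacian with these mixed Neumann--Dirichlet conditions on the strip $\Omega$ gives, for $s\geq 2$,
\begin{equation*}
\|v\|_s \lesssim \|\Delta v\|_{s-2} + |G_3|_{H^{s-3/2}(\Gamma)} + \sum_{|\alpha|\leq s,\ \alpha_3=0}\|D^\alpha v\|_0,
\end{equation*}
and similarly for $\p_t v$ and $\p_t^2 v$. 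The strategy is then to use these estimates in a bootstrap, trading vertical derivatives for time derivatives and tangential derivatives via the equation, while the nonlinear source term $G_1$ is handled by Lemma \ref{pln1}.

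First I would prove $(pnnn1)$ by a two-step bootstrap. At the lowest rung, apply the elliptic estimate with $s=2$ to $\p_t v$: the $\p_t$-differentiated equation expresses $\Delta \p_t v$ in terms of $\p_t^2 v$, $g\nn\p_t\zeta$, $\p_t v$, and $\p_t G_1$, each of which is controlled by the energy $\E$ and tangential $L^2$ quantities $\|D^\alpha v\|_0$, $|D^\alpha\zeta|_0$; Lemma \ref{pln1} contributes the $\E^{3/2}$ error. At the next rung, apply the elliptic estimate with $s=4$ to $v$ and use the undifferentiated equation, substituting the bound on $\|\p_t v\|_2$ just obtained. The boundary term $|G_3|_{5/2}$ is bounded by $|\nn\zeta|_{5/2}\|v\|_{L^\infty}+\cdots$ using the explicit form \eqref{C1} of $G_3$ together with Lemma \ref{SME} and the smallness from the a-priori assumption.

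For $(pnnn2)$ the same bootstrap chain yields $\|\p_t^2 v\|_1$, $\|\p_t v\|_3$, $\|v\|_5$ at the higher regularity level, now with Lemma \ref{pln1} providing the $(\E^{1/2}+\F^{1/2})\D$ error. For the free-surface quantities $|\p_t^i\zeta|_{11/2-2i}$ with $i=1,2,3$, I would differentiate the kinematic boundary condition $\p_t\zeta=w+G_4$ in $t$ and bound $\p_t^{i-1}w$ on $\Gamma$ by using the representation $w=-\int_{-b}^{x_3}J\nna\cdot v\, d\tau$ together with the trace inequality, which loses half an order horizontally but gains a full vertical integration, giving $|\p_t^{i-1} w|_{H^{r}(\Gamma)}\lesssim \|\p_t^{i-1} v\|_{H^{r+1}(\Omega)}$ up to nonlinear corrections. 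Finally, $|\nn\zeta|_3$ is recovered by vertically averaging the momentum equation, which yields
\begin{equation*}
g\nn\zeta = \tfrac{1}{b}\int_{-b}^0\bigl[\Delta v-\p_t v-f\vec\kappa\times v+G_1\bigr]\,dx_3,
\end{equation*}
so that $|\nn\zeta|_3\lesssim \|v\|_5+\|\p_t v\|_3+\|G_1\|_3$ via the vertical trace/average estimate.

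The main obstacle, as usual in free-boundary work, is not any single step but the careful bookkeeping of which derivatives land where. The boundary term $G_3$ involves $\nn\zeta$, which carries no vertical regularity; one must verify that the highest-order piece $|\nn\zeta|_{s-3/2}$ appearing from elliptic regularity is consistent with the order of $\zeta$ on the right-hand side of the claimed inequality. A second delicate point is the interaction between the $H^3$ estimate on $|\nn\zeta|$ and the $H^5$ estimate on $v$: these must close simultaneously, and this is only possible because the dissipation $\D(t)$ contains precisely the matched norms $\|v\|_5$, $|\nn\zeta|_3$, and $|\p_t^i\zeta|_{11/2-2i}$. Once these matchings are verified, the estimates follow by combining the elliptic regularity, Lemma \ref{pln1}, the trace theorem, and Lemma \ref{SME}.
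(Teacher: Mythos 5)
Your plan is in the same spirit as the paper's: use the structure of the momentum equation as a second-order ODE in $x_3$ to recover vertical regularity from tangential data, integrate the equation over the vertical direction to recover $\nn\zeta$, and use the kinematic boundary condition plus $w=-\int_{-b}^{x_3}J\nna\cdot v$ for the time derivatives of $\zeta$. But the ordering you propose is circular, and it is precisely this circularity that the paper's proof is engineered to avoid.

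Specifically, your elliptic estimate for $\|v\|_5$ requires the source term $\|\Delta v\|_3 = \|G_1-\p_t v-g\nn\zeta-f\vec\kappa\times v\|_3$, and since $\nn\zeta$ is $x_3$-independent this requires $|\nn\zeta|_3$. You then propose to ``finally'' recover $|\nn\zeta|_3$ by a vertical average, but your average bound $|\nn\zeta|_3\lesssim\|v\|_5+\|\p_tv\|_3+\cdots$ puts $\|v\|_5$ back on the right. You acknowledge this as a ``delicate point'' that must ``close simultaneously,'' but that is not a proof: both $\|v\|_5$ and $|\nn\zeta|_3$ sit on the left of \eqref{pnnn2} and appear on each other's right-hand sides at top order, so nothing absorbs. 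The paper resolves this by performing the $\nn\zeta$ estimate \emph{first}, and in a form that does not pass through the full $H^5$ norm of $v$: it pairs the momentum equation with $\phi(x_3)\nn\zeta_\alpha$ for a cutoff $\phi\in C_0^\infty((-b,0))$ with unit mass, and after integrating by parts in $x_3$ (which the compactly supported $\phi$ makes boundary-term free) obtains \eqref{pn2}, namely $|\nn\zeta_\alpha|_0\lesssim \|\p_tv_\alpha\|_0+\|v_\alpha\|_0+\|\Delta_*v_\alpha\|_0+\|\p_3v_\alpha\|_0+(\E+\F)^{1/2}\D$ for $|\alpha|\le 3$. Crucially the right side here involves only tangential/time derivatives of $v$ in $L^2$ and a \emph{single} vertical derivative, all of which are controlled by the $\sum_{|\alpha|\le 4}\|D^\alpha v\|_1$ term that appears on the right of \eqref{pnnn2}; $\|\p_3 v_\alpha\|_0$ is then bounded separately by an energy argument with the ODE $-\p_3^2v_\alpha=\cdots$ tested against $v_\alpha$. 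Only after $\nn\zeta$ is thus pinned down does the paper iterate in $x_3$ (writing $-\p_3^2 v_\alpha=\Delta_*v_\alpha+\cdots$, then differentiating in $x_3$) to build up $\|v\|_5$. Two smaller remarks: your unweighted average introduces the bottom trace $\p_3 v|_{x_3=-b}$, which at the needed order requires the very vertical regularity you are trying to establish, whereas the paper's cutoff avoids it; and the claimed ``standard elliptic regularity'' with the lower-order term $\sum_{|\alpha|\le s,\alpha_3=0}\|D^\alpha v\|_0$ is not a citable off-the-shelf result — it is what the paper actually proves by the explicit $x_3$-iteration.
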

\begin{proof}
	To obtain the normal estimates for the strong solution to equations \eqref{p6}-\eqref{p61}, we first derive the bound of $\zeta$ in \eqref{pnnn2}.
	Denote $\phi(x_3)\in C_0^{\infty}((-b,0))$ to a cut-off function, satisfying $$0\leq|\phi(x_3)|+|\phi'(x_3)|\leq M,\qquad \forall x_3\in (-b,0)$$
	for some fixed constant $M>0$ only dependent of $b$ and
	$$\int_{-b}^{0}\phi(x_3)dx_3=1.$$
	Then multiplying the equation \eqref{pl3} by $\phi\nn\zeta_{\alpha}$ and integrating the resulted equations over the domain $\Omega$, we get
	\begin{equation}\label{pn1}
	g\int_{\Gamma}|\nn\zeta_{\alpha}|^2dx'+\int_{\Omega}\left[\p_tv_{\alpha}-\Delta v_{\alpha}+f\vec{\kappa}\times v_{\alpha}\right]\cdot\nn\zeta_{\alpha}\phi(x_3)dx=\int_{\Omega}D^{\alpha}G_1\cdot\nn\zeta_{\alpha}\phi(x_3)dx,
	\end{equation}
	where $V_{\alpha}=D^{\alpha}V=\p_t^{\alpha_0}\p_1^{\alpha_1}\p_2^{\alpha_2}V$ for any regular function $V$ with the multi index $\alpha=(\alpha_0,\alpha_1,\alpha_2,0)$ satisfying $|\alpha|\leq3$.
	By Lemma \ref{pln1}, we have	$$\left|\int_{\Omega}\left[\p_tv_{\alpha}+f\vec{\kappa}\times v_{\alpha}\right]\cdot\nn\zeta_{\alpha}\phi(x_3)dx\right|\lesssim \left[\|\p_tv_{\alpha}\|_0+\|v_{\alpha}\|_0\right]\cdot|\nn\zeta_{\alpha}|_0,$$
	\begin{align*}
	\left|-\int_{\Omega}\Delta v_{\alpha}\cdot\nn\zeta_{\alpha}\phi(x_3)dx\right|=&\left|-\int_{\Omega}\Delta_* v_{\alpha}\cdot\nn\zeta_{\alpha}\phi(x_3)dx+\int_{\Omega}\p_3v_{\alpha}\cdot\nn\zeta_{\alpha}\phi'(x_3)dx\right|\\
	\lesssim&|\nn\zeta_{\alpha}|_0\cdot\left[\|\Delta_*v_{\alpha}\|_0+\|\p_3v_{\alpha}\|_0\right]
	\end{align*}
	and
	$$\left|\int_{\Omega}D^{\alpha}G_1\cdot\nn\zeta_{\alpha}\phi(x_3)dx\right|
	\lesssim\|D^{\alpha}G_1\|_0\cdot|\nn\zeta_{\alpha}|_0
	\lesssim \left[\E(t)+\F(t)\right]\D(t)\cdot|\nn\zeta_{\alpha}|_0,$$
	and therefore combining these estimates with \eqref{pn1} together, we get
	\begin{equation}\label{pn2}
	 |\nn\zeta_{\alpha}|_0^2\lesssim\|\p_tv_{\alpha}\|_0^2+\|v_{\alpha}\|_0^2+\|\Delta_*v_{\alpha}\|_0^2+\|\p_3v_{\alpha}\|_0^2+\left[\E(t)+\F(t)\right]\D(t)^2
	\end{equation}
	for $\alpha=(\alpha_0,\alpha_1,\alpha_2,0)$ satisfying $|\alpha|\leq3$.

\par By the equation \eqref{pl3}$_1$, it holds that for any $\alpha=(\alpha_0,\alpha_1,\alpha_2,0)$
\begin{equation}\label{pn3}
\left\{\begin{aligned}
&-\p_3^2v_{\alpha}=D^{\alpha}G_1-\p_tv_{\alpha}-g\nn\zeta_{\alpha}+\Delta_*v_{\alpha}-f\vec{\kappa}\times v_{\alpha}\qquad &\text{in }\Omega,\\
&\p_{3}v_{\alpha}=D^{\alpha}G_3&\text{on }\Gamma,\\
&v_{\alpha}=0&\text{on }\Sigma_{b},
\end{aligned}\right.
\end{equation}
where the nonlinear term $G_3$ is defined by \eqref{C1}.

\par To get the estimate of $\|\p_{3}v_{\alpha}\|_0$ for $\alpha=(\alpha_0,\alpha_1,\alpha_2,0)$ satisfying $|\alpha|\leq3$, multiplying \eqref{pn3} by $v_{\alpha}$ and integrating the resulted equation by parts over $\Omega$, we have
$$\int_{\Omega}|\p_{3}v_{\alpha}|^2dx=\int_{\Gamma}D^{\alpha}G_3\cdot v_{\alpha}dx'+\int_{\Omega}\left[D^{\alpha}G_1-\p_tv_{\alpha}-g\nn\zeta_{\alpha}+\Delta_*v_{\alpha}\right]\cdot v_{\alpha}dx:=III_1+III_2.$$
Integrating by parts and combining Lemma \ref{pln1} together, we obtain
\begin{equation}\label{C2}
\begin{aligned}
|III_2|&=\int_{\Omega}\left[D^{\alpha-\beta}G_1-\p_tv_{\alpha-\beta}-g\nn\zeta_{\alpha-\beta}+\Delta_*v_{\alpha-\beta}\right]\cdot v_{\alpha+\beta}dx\\
&\lesssim \left[\|D^{\alpha-\beta}G_1\|_0+\|\p_tv_{\alpha-\beta}\|_0+|\nn\zeta_{\alpha-\beta}|_0+\|\Delta_*v_{\alpha-\beta}\|_0\right]\cdot\|v_{\alpha+\beta}\|_0\\
&\lesssim \E(t)^{3}+\sum_{|\alpha|\leq 4}\left(\|D^{\alpha}v\|_0^2+|D^{\alpha}\zeta|_0^2\right),
\end{aligned}
\end{equation}
where the multi index $\beta=(0,\beta_1,\beta_2,0)$ satisfies $\beta\leq\alpha$ and either $|\beta|=1$ if $\alpha_1+\alpha_2\geq1$ or $\beta=0$ if $\alpha_1+\alpha_2=0$.

\par The definition of $G_3$ \eqref{C1} gives
$$\begin{aligned}
D^{\alpha}G_3=&\sum_{\gamma<\alpha}C_{\alpha,\gamma}\left[D^{\alpha-\gamma}\nn\zeta \cdot \nn D^{\gamma}v-D^{\alpha-\gamma}(-|\nn\zeta|^2K+K\p_3\theta) \cdot \p_3D^{\gamma}v\right]\\&+\nabla_*\zeta\cdot\nabla_*v_{\alpha}+(-|\nabla_*\zeta|^2+\p_3\theta)K\p_{3}v_{\alpha},\end{aligned}$$
where $C_{\alpha,\gamma}>0$ is a constant only dependent of the multi indices $\alpha$ and $\gamma$. Then we get that for $|\alpha|\leq3$
\begin{equation*}
\begin{aligned}
&\left|\int_{\Gamma}\left[\nabla_*\zeta\cdot\nabla_*v_{\alpha}+(-|\nabla_*\zeta|^2+\p_3\theta)K\p_{3}v_{\alpha}\right]\cdot v_{\alpha}dx'\right|\\
\lesssim& |\nabla v_{\alpha}|_{-1/2}\cdot \left[|\nn\zeta v_{\alpha}|_{\frac{1}{2}}+\left|(-|\nn\zeta|^2+\p_3\theta)Kv_{\alpha}\right|_{\frac{1}{2}}\right]\\
\lesssim&\|\nabla v_{\alpha}\|_{0}\cdot\left[|\nn\zeta|_3+|\nn\zeta|_3^2+|\p_3\theta|_2\right]\cdot(1+\|\theta\|_4)\cdot|v_{\alpha}|_{\frac{1}{2}}\\
\lesssim&\E(t)^3
\end{aligned}
\end{equation*}
and
\begin{equation*}
\begin{aligned}
&\left|\int_{\Gamma}\sum_{\gamma<\alpha}C_{\alpha,\gamma}\left[D^{\alpha-\gamma}\nn\zeta \cdot \nn D^{\gamma}v-D^{\alpha-\gamma}(-|\nn\zeta|^2K+K\p_3\theta) \cdot \p_3D^{\gamma}v\right]\cdot v_{\alpha}dx'\right|\\
\lesssim&(|\zeta|_4+|\p_t\zeta|_2)\cdot(1+|\zeta|_4)^2\cdot(\|v\|_4^2+\|\p_tv\|_2^2)\\ \lesssim&\E(t)^3.
\end{aligned}
\end{equation*}
Adding above two estimates together, we obtain
$$|III_1|\lesssim\E(t)^{3},$$
which, combining with \eqref{C2} together, gives
\begin{equation}\label{C3}
\|\p_{3}v_{\alpha}\|_0\lesssim \E(t)^{3/2}+\sum_{|\alpha|\leq 6}\left(\|D^{\alpha}v\|_0+|D^{\alpha}\zeta|_0\right),\qquad\text{for }|\alpha|=|(\alpha_0,\alpha_1,\alpha_2,0)|\leq3.
\end{equation}

\par Next, we will use the iteration method to obtain the other estimates in \eqref{pnnn1} and \eqref{pnnn2}. By $\eqref{pn3}_1$ and Lemma \ref{pln1}, we have that for $\alpha=(\alpha_0,\alpha_1,\alpha_2,0)$ and $|\alpha|\leq 2$
\begin{equation}\label{pn4}
\begin{aligned}
\|\p_3^2v_{\alpha}\|_0&\leq \|D^{\alpha}G_1\|_0+\|\p_tv_{\alpha}\|_0+g\|\nn\zeta_{\alpha}\|_0+\|\Delta_*v_{\alpha}\|_0+f\|v_{\alpha}\|_0\\
&\lesssim \E(t)^2+\|\p_tv_{\alpha}\|_0+|\nn\zeta_{\alpha}|_0+\|\Delta_*v_{\alpha}\|_0+\|v_{\alpha}\|_0
\end{aligned}\end{equation}
and
\begin{equation}\label{pn5}
\begin{aligned}
\|\p_3^2v_{\alpha}\|_1&\leq \|D^{\alpha}G_1\|_1+\|\p_tv_{\alpha}\|_1+g\|\nn\zeta_{\alpha}\|_1+\|\Delta_*v_{\alpha}\|_1+f\|v_{\alpha}\|_1\\
&\lesssim \left[\E(t)+\F(t)\right]\D(t)+\|\p_tv_{\alpha}\|_1+|\nn\zeta_{\alpha}|_1+\|\Delta_*v_{\alpha}\|_1+\|v_{\alpha}\|_1.
\end{aligned}
\end{equation}
Differentiating $\eqref{pn3}_1$ with respect to the vertical direction $x_3$, we get that for the multi index $\alpha=(\alpha_0,\alpha_1,\alpha_2,0)$
\begin{equation}\label{pnn34}
-\p_3^3v_{\alpha}=\p_3D^{\alpha}G_1-\p_3\p_tv_{\alpha}+\Delta_*\p_{3}v_{\alpha}-f\vec{\kappa}\times \p_3v_{\alpha},
\end{equation}
and by Lemma \ref{pln1}, we have for $|\alpha|\leq 1$
\begin{equation}\label{pn6}
\begin{aligned}
\|\p_3^3v_{\alpha}\|_0&
\leq \|\p_3D^{\alpha}G_1\|_0+\|\p_3\p_tv_{\alpha}\|_0+\|\Delta_*\p_{3}v_{\alpha}\|_0+\|f\vec{\kappa}\times \p_3v_{\alpha}\|_0\\&
\lesssim \E(t)^2+\|\p_t\p_{3}v_{\alpha}\|_0+\|\Delta_*\p_{3}v_{\alpha}\|_0+\|\p_{3}v_{\alpha}\|_0
\end{aligned}\end{equation}
and
\begin{equation}\label{pn7}
\begin{aligned}
\|\p_3^3v_{\alpha}\|_1&
\leq \|\p_3D^{\alpha}G_1\|_1+\|\p_3\p_tv_{\alpha}\|_1+\|\Delta_*\p_{3}v_{\alpha}\|_1+\|f\vec{\kappa}\times \p_3v_{\alpha}\|_1\\&
\lesssim \left[\E(t)+\F(t)\right]\D(t)+\|\p_t\p_3v_{\alpha}\|_1+\|\Delta_*\p_3v_{\alpha}\|_1+\|\p_3v_{\alpha}\|_1.
\end{aligned}
\end{equation}
Note that the right side in \eqref{pn6} and \eqref{pn7} are bounded by the inequalities \eqref{pn4} and \eqref{pn5}, respectively.

\par Differentiating \eqref{pnn34} with respect to $x_3$ again, we get that
$$-\p_3^4v=\p_3^2G_1-\p_3^2\p_tv+\Delta_*\p^2_{3}v-f\vec{\kappa}\times \p_3^2v,$$
and by Lemma \ref{pln1}, we have
\begin{equation}\label{pn61}
\begin{aligned}
\|\p_3^4v\|_0&
\leq \|\p_3^2G_1\|_0+\|\p_3^2\p_tv\|_0+\|\Delta_*\p^2_{3}v\|_0+\|f\vec{\kappa}\times \p_3^2v\|_0\\&
\lesssim \E(t)^2+\|\p_3^2\p_tv\|_0+\|\Delta_*\p^2_{3}v\|_0+\|\p_3^2v\|_0
\end{aligned}\end{equation}
and
\begin{equation}\label{pn71}
\begin{aligned}
\|\p_3^4v\|_1&
\leq \|\p_3^2G_1\|_1+\|\p_3^2\p_tv\|_1+\|\Delta_*\p_{3}^2v\|_1+\|f\vec{\kappa}\times \p_3^2v\|_1\\&
\lesssim \left[\E(t)+\F(t)\right]\D(t)+\|\p_3^2\p_tv\|_1+\|\Delta_*\p_{3}^2v\|_1+\|\p_3^2v\|_1.
\end{aligned}
\end{equation}

\par Adding \eqref{pn2}, \eqref{C3}-\eqref{pn5} and \eqref{pn6}-\eqref{pn71} together, we obtain the inequality \eqref{pnnn1} and
\begin{equation}\label{pn9}
\sum_{i=0}^{1}\left(\|\p_t^iv\|_{5-2i}+|\nn\p_t^i\zeta|_{3-2i}\right)\lesssim \left(\E(t)^{1/2}+\F(t)^{1/2}\right)\D(t)+\sum_{\substack{\alpha=(\alpha_0,\alpha_1,\alpha_2,0)\\|\alpha|\leq 4}}\|D^{\alpha}v\|_1.
\end{equation}
\par To obtain the estimates of the free boundary $\zeta$, by the kinematic boundary condition \eqref{p4}$_3$, we get
\begin{equation}\label{shl10}
|\p_t\zeta|_{\frac{7}{2}}\lesssim\|w\|_{4}+|v\cdot\nn\zeta|_{\frac{7}{2}}\lesssim \|w\|_{4}+\|v\|_4\cdot|\nn\zeta|_{\frac{7}{2}},
\end{equation}
\begin{equation}\label{shl11}
|\p_t^2\zeta|_{\frac{3}{2}}\lesssim \|\p_tw\|_2+|\p_t(v\cdot\nn\zeta)|_{\frac{3}{2}}\lesssim \|\p_tw\|_2 +\|\p_tv\|_2\cdot|\nn\zeta|_{\frac{3}{2}}+\|v\|_2\cdot|\p_t\zeta|_{\frac{5}{2}}
\end{equation}
and
\begin{equation}\label{shl12}
\begin{aligned}
|\p_t^3\zeta|_{-\frac{1}{2}}&\lesssim \|\p_t^2w\|_0+|\p_t^2(v\cdot\nn\zeta)|_{-\frac{1}{2}}\\
&\lesssim\|\p_t^2w\|_0+|\p_t^2v|_{-\frac{1}{2}}\cdot|\nn\zeta|_{\frac{3}{2}}+|\p_tv|_{-\frac{1}{2}}\cdot|\nn\p_t\zeta|_{\frac{3}{2}} +|v|_{\frac{3}{2}}\cdot|\nn\p_t^2\zeta|_{-\frac{1}{2}}\\
&\lesssim\|\p_t^2w\|_0+\|\p_t^2v\|_0\cdot|\nn\zeta|_{\frac{3}{2}}+\|\p_tv\|_0\cdot|\p_t\zeta|_{\frac{5}{2}} +\|v\|_2\cdot|\p_t^2\zeta|_{\frac{1}{2}}.
\end{aligned}\end{equation}
By the relation \eqref{p5}, we have
\begin{equation*}\begin{aligned}
&\|D^{\alpha}w\|_0\leq\left\|-\int_{-b}^{x_3}D^{\alpha}(J\nna v)(t,x',s)ds\right\|_0\lesssim \sum_{i=0}^{2}\|\p_t^iv\|_{5-2i}+\left(\E(t)+\F(t)\right)\D(t),\qquad\text{for } \forall |\alpha|\leq 4,\\
&\|\p_{3}^kD^{\alpha}w\|_0\leq\left\|-\p_{3}^{k-1}D^{\alpha}(J\nna v)\right\|_0\lesssim \sum_{i=0}^{2}\|\p_t^iv\|_{4-2i}+\E(t)\D(t),\quad\text{for } \forall 1\leq k\leq 4\text{ and } |\alpha|\leq 4-k,
\end{aligned}
\end{equation*}
where the multi index $\alpha=(\alpha_0,\alpha_1,\alpha_2,0)$.
Combining the above two inequalities with \eqref{shl10}, \eqref{shl11} and \eqref{shl12} together, we obtain
\begin{equation}\label{pn10}
\sum_{i=1}^{3}|\p_t^i\zeta|_{\frac{11}{2}-2i}\lesssim\sum_{i=0}^{2}\|\p_t^{i}v\|_{5-2i}+\left(\E(t)+\F(t)\right)\D(t).
\end{equation}
Adding \eqref{pn9} and \eqref{pn10} together, we get the estimate \eqref{pnnn2}. Then the proof is completed.
\end{proof}

\subsection{Estimates of the free surface}
\par In this subsection, we will establish the estimates on the free surface $\zeta$, i.e. the norm of $|\nn\zeta|_{\frac{7}{2}}$, which is used to control the nonliear terms in the tangential and normal estimates. Unlike the incompressible Navier-Stokes equations, the kinematic boundary condition \eqref{p61} is not enough for us to obtain the bound of $|\nn\zeta|_{\frac{7}{2}}$ because the regularity of the vertical velocity $w$ is one order lower than the horizontal velocity $v$. One important observation is the relation between the free surface $\zeta$ and the pressure $P$, describing in \eqref{p5}, which inspires us to combine the kinetic boundary condition \eqref{p61} and the horizontal momentum equations together. So integrating with respect to the vertical direction from the bottom to the top, we get the new equation of the free surface $\zeta$, which is the wave equation with the strong damping term. And then the expectant estimate of $\zeta$ will be established by this new equation.

\par We first derive the new equation which the free surface $\zeta$ satisfies. Define $$\varphi(t,x'):=w(t,x',0)=-\int_{-b}^{0}(J\nna\cdot v)(t,x',x_3)dx_3.$$
Applying the operator $J\nna\cdot$ to the horizontal momentum equations \eqref{p6}$_1$ and integrating the resulted equation with respect to the vertical direction over $[-b,0]$, we have
\begin{equation}\label{pf1}
-\p_t\varphi+\int_{-b}^0(gJ\Delta_*\zeta)dx_3-\int_{-b}^0\left(\Delta_{\mathcal{A}}(J\nna\cdot v)\right)dx_3+\int_{-b}^0f\left(J(-\bar{\p_1}v^2+\bar{\p_2}v^1)\right)dx_3
=\Phi_1,
\end{equation}
where \begin{equation}\label{pfn1}
\begin{aligned}
\Phi_1:=&\int_{-b}^0\bigg[J\nna\cdot\left(\p_t\theta K\p_3v-v\cdot\nna v-wK\p_3v\right)+\p_t(J\nna\cdot v)-J\nna\cdot\p_tv\\&\qquad+J\nna\cdot\Delta_{\mathcal{A}}v-\Delta_{\mathcal{A}}(J\nna\cdot v)\bigg]dx_3.
\end{aligned}\end{equation}
Then it holds
\begin{align*}
&\int_{-b}^0(gJ\Delta_*\zeta)dx_3=g\Delta_*\zeta\cdot\int_{-b}^0(1+\p_3\theta)dx_3=g\Delta_*\zeta\cdot(b+\zeta),\\
&-\int_{-b}^0\left(\Delta_{\mathcal{A}}(J\nna\cdot v)\right)dx_3=-\int_{-b}^0\left[\Delta(J\nna\cdot v)+(\Delta_{\mathcal{A}}-\Delta)(J\nna\cdot v)\right]dx_3\\
&\qquad\qquad\qquad\qquad\qquad\quad=\Delta_*\varphi-\p_3(J\nna\cdot v)\arrowvert_{x_3=-b}^{0}-\int_{-b}^0(\Delta_{\mathcal{A}}-\Delta)(J\nna\cdot v)dx_3,
\end{align*}
and combining the above equalities with \eqref{pf1} together, we get
\begin{equation}\label{pf2}
-\p_t\varphi+gb\Delta_*\zeta+\Delta_*\varphi=\Phi_2,
\end{equation}
where \begin{equation}\label{pfn2}
\begin{aligned}
\Phi_2:=&-\int_{-b}^0f\left(J(-\bar{\p_1}v^2+\bar{\p_2}v^1)\right)dx_3+\p_3(J\nna\cdot v)\arrowvert_{x_3=-b}^{0}\\&+\int_{-b}^0(\Delta_{\mathcal{A}}-\Delta)(J\nna\cdot v)dx_3-g\zeta\Delta_*\zeta+\Phi_1.\end{aligned}\end{equation}
Combining \eqref{pf2} and the kinetic boundary condition \eqref{p61} together, we obtain
\begin{equation}\label{nfe}
\p_t^2\zeta-gb\Delta_*\zeta-\Delta_*\p_t\zeta=\Phi\qquad \text{on }\Gamma,
\end{equation}
where $\Gamma:=\T^2\text{ or }\R^2$ and
\begin{equation}\label{pfn3}
\Phi:=-\Phi_2-\p_t(v\cdot\nn\zeta)+\Delta_*(v\cdot\nn\zeta),
\end{equation}
where $\Phi_i$ ($i=1,2$) is defined by \eqref{pfn1} and \eqref{pfn2}.

\par Define $\eta:=(1+|\nn|)^{\frac{3}{2}}\nn\zeta$, satisfying the equation
\begin{equation}\label{pf3}
\p_t^2\eta-gb\Delta_*\eta-\Delta_*\p_t\eta=(1+|\nn|)^{\frac{3}{2}}\nn\Phi.
\end{equation}

\par To give the estimate of $\zeta$, the following commutator estimate is needed to bound the nonlinear terms. Define the commutator by
 $$\left[(1+|\nn|)^{\frac{3}{2}}\nn, v\cdot\nn\right]f:=(1+|\nn|)^{\frac{3}{2}}\nn\left(v\cdot\nn f\right)-v\cdot\nn\left((1+|\nn|)^{\frac{3}{2}}\nn f \right).$$
\begin{lemma}\label{CE}
	Let $v(x',x_3)\in H^3(\Omega)$ and $\nn f\in H^{\frac{3}{2}}(\Gamma)$. Then we have
	$$\left|\left[(1+|\nn|)^{\frac{3}{2}}\nn, v\cdot\nn\right]f\right|_0\lesssim \|v\|_3|\nn f|_{\frac{3}{2}}.$$
\end{lemma}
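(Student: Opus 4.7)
The plan is to exploit the fact that $(1+|\nn|)^{\frac{3}{2}}\nn$ is a Fourier multiplier commuting with every horizontal derivation $\p_j$, so the commutator reduces to a Kato--Ponce-type commutator with multiplication by the scalar $v^j$. First I would peel off $\p_j$ using
$$
\bigl[(1+|\nn|)^{\frac{3}{2}}\nn,\,v\cdot\nn\bigr]f \;=\; \sum_{j=1}^{2}\bigl[(1+|\nn|)^{\frac{3}{2}}\nn,\,v^{j}\bigr]\p_{j}f,
$$
which follows from $[(1+|\nn|)^{\frac{3}{2}}\nn,\p_j]=0$. This converts the problem to two commutators of a vector-valued Fourier multiplier of order $5/2$ with multiplication by the trace $v^j|_\Gamma$, applied to $\p_j f$.

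Next I would invoke the classical Kato--Ponce commutator estimate on $\Gamma=\T^2$ or $\R^2$: for a Fourier multiplier $T$ of order $s=5/2$,
$$
\bigl|[T,v^j]g\bigr|_{0} \;\lesssim\; |\nn v^{j}|_{L^{\infty}}\,|g|_{\frac{3}{2}} \;+\; |v^{j}|_{\frac{5}{2}}\,|g|_{L^{\infty}}.
$$
Taking $g=\p_j f$ gives $|g|_{\frac{3}{2}}\lesssim|\nn f|_{\frac{3}{2}}$, and in two horizontal dimensions the embedding $H^{\frac{3}{2}}(\Gamma)\hookrightarrow L^{\infty}(\Gamma)$ (since $\frac{3}{2}>1$) yields $|g|_{L^{\infty}}\lesssim|\nn f|_{\frac{3}{2}}$. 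For the $v$ factors, the trace theorem bounds $|v|_{\frac{5}{2}}\lesssim\|v\|_{3}$, and the $2$D embedding $H^{\frac{5}{2}}(\Gamma)\hookrightarrow W^{1,\infty}(\Gamma)$ (since $\frac{5}{2}>2$) gives $|\nn v^{j}|_{L^{\infty}}\lesssim\|v\|_{3}$. Assembling these factors and summing over $j$ delivers the desired bound $\|v\|_{3}|\nn f|_{\frac{3}{2}}$.

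The main obstacle is justifying the Kato--Ponce estimate for the specific vector-valued multiplier $(1+|\nn|)^{\frac{3}{2}}\nn$ rather than the scalar multiplier $(1-\Delta_*)^{s/2}$ for which the estimate is textbook. A clean way around this is to split $\nn=(\p_1,\p_2)$ componentwise and write $(1+|\nn|)^{\frac{3}{2}}\p_k=\p_k(1+|\nn|)^{\frac{3}{2}}$, reducing each component of $[T,v^j]\p_j f$ to $\p_k\bigl\{[(1+|\nn|)^{\frac{3}{2}},v^{j}]\p_j f\bigr\}$ plus a Leibniz remainder $(\p_k v^j)(1+|\nn|)^{\frac{3}{2}}\p_j f$, the latter being handled directly by H\"older and the fact that $(1+|\nn|)^{\frac{3}{2}}\p_j f\in L^2$ with $|\cdot|_0\lesssim|\nn f|_{\frac{3}{2}}$. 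The scalar commutator is then the standard Kato--Ponce, and the sharpness of the hypothesis $v\in H^3(\Omega)$ is dictated precisely by the borderline $2$D embedding $H^{\frac{3}{2}}\hookrightarrow L^{\infty}$ that closes the estimate.
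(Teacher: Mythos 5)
Your route is genuinely different from the paper's, and it has a gap at precisely the point you flag as the ``main obstacle.'' The paper does not reduce to Kato--Ponce at all; it computes the commutator directly on the Fourier side, obtaining
$$\hat{\Psi}_1(\xi)=-\int_{\R^2}s\cdot\hat{v}(\xi-s,0)\,\hat{f}(s)\bigl[\xi(1+|\xi|)^{\frac{3}{2}}-s(1+|s|)^{\frac{3}{2}}\bigr]\,ds,$$
proving the elementary pointwise symbol inequality
$$\bigl|\xi(1+|\xi|)^{\frac{3}{2}}-s(1+|s|)^{\frac{3}{2}}\bigr|\lesssim|\xi-s|\bigl[(1+|s|)^{\frac{3}{2}}+(1+|\xi-s|)^{\frac{3}{2}}\bigr],$$
and finishing with Young's convolution inequality and Plancherel. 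This is self-contained and bespoke to the multiplier at hand; it supplies exactly the cancellation you want from a commutator estimate without ever invoking a black-box theorem.

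Your reduction is plausible, and the numerology you check (the trace $|v|_{\frac{5}{2}}\lesssim\|v\|_3$, the $2$D embeddings $H^{\frac{3}{2}}\hookrightarrow L^\infty$ and $H^{\frac{5}{2}}\hookrightarrow W^{1,\infty}$) is right, but two issues prevent the argument from closing as written. First, the symbols $(1+|\xi|)^{\frac32}\xi_k$ and $(1+|\xi|)^{\frac32}$ have discontinuous second derivatives at $\xi=0$, so they are not classical H\"ormander symbols and the textbook Kato--Ponce estimate does not apply verbatim; the paper's symbol inequality is precisely the replacement for it. Second, your workaround does not fix this: after factoring $(1+|\nn|)^{\frac32}\p_k$ and pulling $\p_k$ outside, the term $\p_k\bigl\{[(1+|\nn|)^{\frac32},v^j]\p_j f\bigr\}$ must be bounded in $L^2$, which requires control of the scalar commutator $[(1+|\nn|)^{\frac32},v^j]\p_j f$ in $H^1(\Gamma)$, not in $L^2(\Gamma)$---and ``standard Kato--Ponce'' gives only the latter. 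Distributing the derivative via $\p_k[S,v^j]g=[S,\p_k v^j]g+[S,v^j]\p_k g$ one can try to recover this, but the term $[S,\p_k v^j]\p_j f$ then involves $\nn\p_k v^j$ and, paired naively with $L^\infty$, would demand $v\in W^{2,\infty}(\Gamma)$, which exceeds the trace of $H^3(\Omega)$; a more careful H\"older pairing (say $L^4$--$L^4$) is needed and is not carried out. So the reduction-to-Kato--Ponce strategy can likely be made to work, but it requires more bookkeeping than the proposal supplies, whereas the paper's direct Fourier computation avoids both issues entirely.
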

\begin{proof}
	Without loss of generity, we only give the proof in the horizontal infinite space. So the commutator $\Psi_1:=\left[(1+|\nn|)^{\frac{3}{2}}\nn, v\cdot\nn\right]f$ can be rewritten by the Fourier transformation
	\begin{equation*}
	\begin{aligned}
	 \hat{\Psi}_1(\xi)=&i\xi(1+|\xi|)^{\frac{3}{2}}\int_{\R^2}\hat{v}(\xi-s,0)\cdot(is\hat{f}(s))ds\\&-\int_{\R^2}\hat{v}(\xi-s,0)\cdot(is)\left((1+|s|)^{\frac{3}{2}}(is)\hat{f}(s)\right)ds\\
	=&-\int_{\R^2}s\cdot\hat{v}(\xi-s,0)\hat{f}(s)\left[\xi(1+|\xi|)^{\frac{3}{2}}-s(1+|s|)^{\frac{3}{2}}\right]ds.
	\end{aligned}\end{equation*}
	We claim that
	$$\left|\xi(1+|\xi|)^{\frac{3}{2}}-s(1+|s|)^{\frac{3}{2}}\right|\lesssim |\xi-s|\left[(1+|s|)^{\frac{3}{2}}+(1+|\xi-s|)^{\frac{3}{2}}\right].$$
	Indeed, this claim is showed by direct calculation
	\begin{align*}
	 &\xi(1+|\xi|)^{\frac{3}{2}}-s(1+|s|)^{\frac{3}{2}}\\=&(\xi-s)(1+|\xi|)^{\frac{3}{2}}+s\left[(1+|\xi|)^{\frac{3}{2}}-(1+|s|^{\frac{3}{2}})\right]\\
	 =&(\xi-s)(1+|\xi|)^{\frac{3}{2}}+s(|\xi|-|s|)\left[2+|\xi|+|s|+(1+|\xi|)^{\frac{1}{2}}(1+|s|)^{\frac{1}{2}}\right]\cdot\left[(1+|s|)^{\frac{1}{2}}+(1+|\xi|)^{\frac{1}{2}}\right]^{-1}.
	\end{align*}
	Therefore by the Cauchy inequality, we get this claim.
	
	\par By Young inequality and Planchel theorem, we obtain
	\begin{equation*}
	\begin{aligned}
	|\Psi_1|_0=|\hat{\Psi}_1|_0\lesssim& \left|-\int_{\R^2}\left(|\xi-s||\hat{v}(\xi-s,0)|\right)\cdot\left((1+|s|)^{\frac{3}{2}}|s\hat{f}|\right)ds\right|_0\\&+\left|\int_{\R^2}\left(|\xi-s|(1+|\xi-s|)^{\frac{3}{2}}|\hat{v}(\xi-s,0)|\right)\cdot\left(|s\hat{f}|\right)ds\right|_0\\
	 \lesssim&\left|(1+|\xi|)^{\frac{3}{2}}|\xi\hat{f}|\right|_0\cdot\int_{\R^2}(1+|\xi|)^{-\frac{3}{2}}\cdot\left((1+|\xi|)^{\frac{3}{2}}|\xi||\hat{v}(\xi,0)|\right)d\xi\\&+\left||\xi|(1+|\xi|)^{\frac{3}{2}}|\hat{v}(\xi,0)|\right|_0\cdot\int_{\R^2}(1+|\xi|)^{-\frac{3}{2}}\cdot|(1+|\xi|)^{\frac{3}{2}}\xi\hat{f}(\xi)|d\xi\\
	\lesssim & \|v\|_3|\nn f|_{\frac{3}{2}}.
	\end{aligned}
	\end{equation*}
	Note that H\"older inequality is used in the last inequality.
\end{proof}

By Lemma \ref{CE}, we have the following estimate on the free boundary $\zeta$.
\begin{proposition}\label{FBE}
	Let $T>0$ and $(v,w,\zeta)$ be the strong solution to equations \eqref{p6}-\eqref{p61}. Suppose the assumptions in Theorem \ref{th1} or Theorem \ref{th2} hold. Then, under the a-priori assumption \eqref{a-priori assumption}, we have
	\begin{equation}\label{pf}
	\begin{aligned}
	 &\frac{d}{dt}\int_{\Gamma}\left[|\p_t\eta|^2-\p_t\eta\cdot\Delta_*\eta+|\Delta_*\eta|^2+gb|\nn\eta|^2\right]dx'+\int_{\Gamma}\left[|\nn\p_t\eta|^2+|\Delta_*\eta|^2\right]dx'\\\lesssim & (1+\E(t)+\F(t))\D(t)^2,
	\end{aligned}
	\end{equation}
	where the functions $\E(t)$, $\D(t)$ and $\F(t)$ are defined by \eqref{DED}.
\end{proposition}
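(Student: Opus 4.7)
The strategy is to treat \eqref{pf3} as a linear damped wave equation for $\eta$ with forcing $(1+|\nn|)^{\frac{3}{2}}\nn\Phi$ and to construct a Lyapunov functional whose dissipation controls both $|\nn\p_t\eta|_0^2$ and $|\Delta_*\eta|_0^2$. Because the damping $-\Delta_*\p_t\eta$ is of strong (second-order) type, a single test function is not enough: testing against $\p_t\eta$ alone only yields $|\nn\p_t\eta|_0^2$, while testing against $-\Delta_*\eta$ is what produces $|\Delta_*\eta|_0^2$, so both are required.

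The first step is to multiply \eqref{pf3} by $\p_t\eta$ and integrate over $\Gamma$, giving after integration by parts
\begin{equation*}
\frac{1}{2}\frac{d}{dt}\int_{\Gamma}\left[|\p_t\eta|^2+gb|\nn\eta|^2\right]dx'+\int_{\Gamma}|\nn\p_t\eta|^2dx'=\int_{\Gamma}(1+|\nn|)^{\frac{3}{2}}\nn\Phi\cdot\p_t\eta\,dx'.
\end{equation*}
The second step is to multiply \eqref{pf3} by $-\Delta_*\eta$, using $\int\p_t^2\eta(-\Delta_*\eta)=\frac{d}{dt}\int(-\p_t\eta\cdot\Delta_*\eta)-\int|\nn\p_t\eta|^2$, to obtain
\begin{equation*}
\frac{d}{dt}\int_{\Gamma}\left[-\p_t\eta\cdot\Delta_*\eta+\tfrac{1}{2}|\Delta_*\eta|^2\right]dx'-\int_{\Gamma}|\nn\p_t\eta|^2dx'+gb\int_{\Gamma}|\Delta_*\eta|^2dx'=-\int_{\Gamma}(1+|\nn|)^{\frac{3}{2}}\nn\Phi\cdot\Delta_*\eta\,dx'.
\end{equation*}
A combination (first step)$+\delta\cdot$(second step) with $\delta\in(0,1)$ small produces, up to fixed positive constants absorbed into $\lesssim$, the target Lyapunov functional $|\p_t\eta|^2-\p_t\eta\cdot\Delta_*\eta+|\Delta_*\eta|^2+gb|\nn\eta|^2$ together with a strictly positive dissipation in both $|\nn\p_t\eta|_0^2$ and $|\Delta_*\eta|_0^2$.

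To close the estimate I need to bound the two forcing integrals by $(1+\E(t)+\F(t))\D(t)^2$. By Cauchy--Schwarz together with $\p_t\eta=(1+|\nn|)^{\frac{3}{2}}\nn\p_t\zeta$ and $\Delta_*\eta=(1+|\nn|)^{\frac{3}{2}}\Delta_*\nn\zeta$, the two forcing integrals are dominated respectively by $|\Phi|_{\frac{5}{2}}\cdot|\p_t\zeta|_{\frac{7}{2}}$ and $|\Phi|_{\frac{5}{2}}\cdot|\nn\zeta|_{\frac{7}{2}}$, the first being a dissipative factor in $\D(t)$ and the second equal to $\F(t)$. Expanding $\Phi=-\Phi_2-\p_t(v\cdot\nn\zeta)+\Delta_*(v\cdot\nn\zeta)$ via \eqref{pfn1}--\eqref{pfn2} and estimating each resulting piece by the trace theorem, Sobolev embedding in $\Omega$, Lemma \ref{SME}, and the relation \eqref{p5} expressing $w$ as the vertical integral of $J\nna\cdot v$ (so that $w$ is controlled by $v$), should yield $|\Phi|_{\frac{5}{2}}\lesssim(\E(t)^{\frac{1}{2}}+\F(t)^{\frac{1}{2}})\D(t)$, after which Young's inequality absorbs the remaining top-order pieces into the left-hand dissipation.

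The main obstacle is the top-order contribution from $(1+|\nn|)^{\frac{3}{2}}\nn(v\cdot\nn\zeta)$ arising when $\p_t$ or $\Delta_*$ falls on $\zeta$ in the two nonlinear pieces of $\Phi$: a naive product estimate would pair $\|v\|_{3}$ with $|\nn\zeta|_{\frac{7}{2}}=\F(t)$ but at a level where the energy $\E(t)$ does not directly control the companion $v$-factor, effectively losing a derivative on $\zeta$. This is exactly where Lemma \ref{CE} becomes indispensable: it lets me commute $(1+|\nn|)^{\frac{3}{2}}\nn$ through $v\cdot\nn$, writing $(1+|\nn|)^{\frac{3}{2}}\nn(v\cdot\nn\zeta)=v\cdot\nn\eta$ plus a remainder controlled by $\|v\|_{3}|\nn\zeta|_{\frac{3}{2}}$; a subsequent integration by parts against $\p_t\eta$ or $\Delta_*\eta$ transfers one tangential derivative onto the test function, so that the remaining $\zeta$-norm lives at the $\D(t)$-level rather than the $\F(t)$-level, yielding the desired $(1+\E(t)+\F(t))\D(t)^2$ bound.
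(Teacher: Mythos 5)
Your proposal follows essentially the same route as the paper's proof: test \eqref{pf3} against $\p_t\eta$ and $-\Delta_*\eta$, combine the two identities into a coercive Lyapunov estimate, and dispatch the loss-of-derivative term coming from $v\cdot\nn\Delta_*\zeta$ (the highest-order piece of $\Delta_*(v\cdot\nn\zeta)$) via the commutator Lemma \ref{CE} together with an integration by parts that transfers a gradient onto $\p_t\eta$ or $\Delta_*\eta$, while the remaining pieces of $\Phi$ are estimated term by term with Lemma \ref{SME}, the trace theorem, and \eqref{p5}. The only cosmetic difference is the linear combination — the paper in effect takes twice the $\p_t\eta$-identity plus the $-\Delta_*\eta$-identity, whereas you use a small $\delta$-weight on the second; both give equivalent (though not literally identical) Lyapunov functionals, and your intermediate assertion that $|\Phi|_{\frac{5}{2}}\lesssim(\E^{1/2}+\F^{1/2})\D$ holds outright is not quite accurate (that is precisely what fails for the top-order piece), but you correctly repair this in the following paragraph.
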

\begin{proof}
	To obtain this estimate on the free boundary $\zeta$,
	multiplying \eqref{pf3} by $\p_t\eta$ and integrating the resulted equation by parts over $\Gamma$, we have
	\begin{equation}\label{pf4}
	 \frac{1}{2}\frac{d}{dt}\int_{\Gamma}\left[|\p_t\eta|^2+gb|\nn\eta|^2\right]dx'+\int_{\Gamma}|\nn\p_t\eta|^2dx'=\int_{\Gamma}(1+|\nn|)^{\frac{3}{2}}\nn\Phi\cdot\p_t\eta dx'.
	\end{equation}
	And then multiplying \eqref{pf3} by $-\Delta_*\eta$ and integrateing the resulted equation by parts over $\Gamma$, we get
	\begin{equation}\label{pf5}
	\begin{aligned}
	 &\frac{d}{dt}\int_{\Gamma}\left[-\p_t\eta\cdot\Delta_*\eta+|\Delta_*\eta|^2\right]dx'-\int_{\Gamma}|\nn\p_t\eta|^2dx'+gb\int_{\Gamma}|\Delta_*\eta|^2dx'\\=&\int_{\Gamma}(1+|\nn|)^{\frac{3}{2}}\nn\Phi\cdot(-\Delta_*\eta)dx'.
	\end{aligned}
	\end{equation}
	Therefore adding \eqref{pf4} and \eqref{pf5} together, we get
	\begin{equation} \label{pf6}
	\begin{aligned}
	 &\frac{d}{dt}\int_{\Gamma}\left[|\p_t\eta|^2-\p_t\eta\cdot\Delta_*\eta+|\Delta_*\eta|^2+gb|\nn\eta|^2\right]dx'+\int_{\Gamma}\left[|\nn\p_t\eta|^2+gb|\Delta_*\eta|^2\right]dx'\\=&\int_{\Gamma}\left[\p_t\eta-\Delta_*\eta\right]\cdot(1+|\nn|)^{\frac{3}{2}}\nn\Phi dx'\\
	:=&\sum\limits_{i=1}^6\Rmnum{4}_i,
	\end{aligned}\end{equation}
where the nonlinear terms $\Rmnum{4}_i$ are defined by
\begin{equation}\label{NNNNN}
\begin{aligned}
&\Rmnum{4}_1=\int_{\Gamma}\left\{\left[\p_t\eta-\Delta_*\eta\right]\cdot(1+|\nn|)^{\frac{3}{2}}\nn(-\p_t(v\cdot\nn\zeta))\right\}dx',\\
&\Rmnum{4}_2=\int_{\Gamma}\left\{\left[\p_t\eta-\Delta_*\eta\right]\cdot(1+|\nn|)^{\frac{3}{2}}\nn(\Delta_*(v\cdot\nn\zeta))\right\}dx',\\
&\Rmnum{4}_3=\int_{\Gamma}\Bigg\{\left[\p_t\eta-\Delta_*\eta\right]\cdot(1+|\nn|)^{\frac{3}{2}}\nn\Bigg[\p_3(J\nna\cdot v)\arrowvert_{x_3=-b}^{0}-\int_{-b}^0f\left(J(-\bar{\p_1}v^2+\bar{\p_2}v^1)\right)dx_3\\
&\qquad\qquad\qquad-g\zeta\Delta_*\zeta+\int_{-b}^0\left[J\nna\cdot\left(\p_t\theta K\p_3v-v\cdot\nna v-wK\p_3v\right)\right]dx_3\Bigg]\Bigg\}dx',\\
&\Rmnum{4}_4=\int_{\Gamma}\left\{\left[\p_t\eta-\Delta_*\eta\right]\cdot(1+|\nn|)^{\frac{3}{2}}\nn\int_{-b}^0\left[(\Delta_{\mathcal{A}}-\Delta)(J\nna\cdot v)\right]dx_3\right\}dx',\\
&\Rmnum{4}_5=\int_{\Gamma}\left\{\left[\p_t\eta-\Delta_*\eta\right]\cdot(1+|\nn|)^{\frac{3}{2}}\nn\int_{-b}^0\left[\p_t(J\nna\cdot v)-J\nna\cdot\p_tv\right]dx_3\right\}dx',\\
&\Rmnum{4}_6=\int_{\Gamma}\left\{\left[\p_t\eta-\Delta_*\eta\right]\cdot(1+|\nn|)^{\frac{3}{2}}\nn\int_{-b}^0\left[J\nna\cdot\Delta_{\mathcal{A}}v-\Delta_{\mathcal{A}}(J\nna\cdot v)\right]dx_3\right\}dx'.
\end{aligned}
\end{equation}
\par These nonlinear terms can be estimated below one by one.

\par Indeed, by Lemma \ref{SME}, we have
\begin{equation}\label{pf7}
\begin{aligned}
|\Rmnum{4}_1|=&\left|\int_{\Gamma}\nn\p_t\eta\cdot(1+|\nn|)^{\frac{3}{2}}(-\p_t(v\cdot\nn\zeta))dx'+\int_{\Gamma}\Delta_*\eta\cdot(1+|\nn|)^{\frac{3}{2}}\nn(-\p_t(v\cdot\nn\zeta))dx'\right|\\
\lesssim& \left[|\p_t\nn\eta|_0+|\Delta_*\eta|_0\right]\cdot\left[|\p_tv\cdot\nn\zeta|_{\frac{5}{2}}+|v\cdot\nn\p_t\zeta|_{\frac{5}{2}}\right]\\
\lesssim& \left[|\p_t\nn\eta|_0+|\Delta_*\eta|_0\right]\cdot\left[\|\p_tv\|_3|\nn\zeta|_{\frac{5}{2}}+\|v\|_3|\p_t\zeta|_{\frac{7}{2}}\right]\\
\lesssim&\left[|\p_t\nn\eta|_0+|\Delta_*\eta|_0\right]\cdot\E(t)\D(t).
\end{aligned}
\end{equation}
To bound the nonlinear term $\Rmnum{4}_2$, by Leibniz rule, it holds
$$\Delta_*(v\cdot\nn\zeta)=v\cdot\nn\Delta_*\zeta+\left(2\sum_{i,j=1}^{2}\p_iv^j\p_j\p_i\zeta+\Delta_*v\cdot \nn\zeta\right):=\Rmnum{5}_1+\Rmnum{5}_2,$$
and then we get
\begin{equation}\label{pf8}
\begin{aligned}
&\left|\int_{\Gamma}\left[\p_t\eta-\Delta_*\eta\right]\cdot(1+|\nn|)^{\frac{3}{2}}\nn \Rmnum{5}_2dx'\right|\\
\lesssim&\left[|\p_t\nn\eta|_0+|\Delta_*\eta|_0\right]\cdot\left(|v|_{\frac{7}{2}}\cdot|\nn\zeta|_{\frac{7}{2}}+|v|_{\frac{9}{2}}\cdot|\nn\zeta|_{\frac{7}{2}}\right)\\
\lesssim&\left[|\p_t\nn\eta|_0+|\Delta_*\eta|_0\right]\cdot\left(\E(t)+\F(t)\right)\D(t).
\end{aligned}
\end{equation}
Integrating by parts, we get
\begin{equation*}
\begin{aligned}
&\int_{\Gamma}\left[\p_t\eta-\Delta_*\eta\right]\cdot(1+|\nn|)^{\frac{3}{2}}\nn(v\cdot\nn\Delta_*\zeta)dx'\\
=&\int_{\Gamma}\left[\p_t\eta-\Delta_*\eta\right]\cdot\left[v\cdot\nn(\Delta_*\eta)\right]dx'+\int_{\Gamma}\left[\p_t\eta-\Delta_*\eta\right]\cdot\left\{\left[(1+|\nn|)^{\frac{3}{2}}\nn, v\cdot\nn\right]\Delta_*\zeta\right\}dx'\\
=&\int_{\Gamma}\left[-\nn\cdot(\p_t\eta v)\Delta_*\eta+\frac{1}{2}\nn\cdot v|\Delta_*\eta|^2\right]dx'
+\int_{\Gamma}\left[\p_t\eta-\Delta_*\eta\right]\cdot\left\{\left[(1+|\nn|)^{\frac{3}{2}}\nn, v\cdot\nn\right]\Delta_*\zeta\right\}dx'\\
:=&\Rmnum{5}_3+\Rmnum{5}_4,
\end{aligned}\end{equation*}
and then
\begin{equation}\label{jj1}
|\Rmnum{5}_3|\lesssim \|v\|_4\left(|\p_t\eta|_1|\Delta_*\eta|_0+|\Delta_*\eta|_0^2\right)\lesssim \E(t)\left(|\nn\p_t\eta|_0^2+|\Delta_*\eta|_0^2\right)+\E(t)\D(t)|\Delta_*\eta|_0.
\end{equation}
By Lemma \ref{CE}, we have
\begin{equation}\label{jj2}
\begin{aligned}
|\Rmnum{5}_4|\lesssim&\left[|\p_t\eta|_0+|\Delta_*\eta|_0\right]\cdot\left|\left[(1+|\nn|)^{\frac{3}{2}}\nn, v\cdot\nn\right]\Delta_*\zeta\right|_0\\
\lesssim&\left[|\p_t\eta|_0+|\Delta_*\eta|_0\right]\cdot\|v\|_3\cdot|\nn\Delta_*\zeta|_{\frac{3}{2}}\\
\lesssim&\E(t)\D(t)|\Delta_*\eta|_0+\E(t)|\Delta_*\eta|_0^2.
\end{aligned}
\end{equation}
Adding \eqref{pf8}, \eqref{jj1} and \eqref{jj2} together, we obtain
\begin{equation}\label{pf9}
\begin{aligned}
\left|\Rmnum{4}_2\right|\lesssim \E(t)\left(|\nn\p_t\eta|_0^2+|\Delta_*\eta|_0^2\right)+\left[|\p_t\nn\eta|_0+|\Delta_*\eta|_0\right]\cdot\left(\E(t)+\F(t)\right)\D(t).
\end{aligned}
\end{equation}

\par We directly estimate the nonlinear term $\Rmnum{4}_3$ by Sobolev embedding inequalities and Lemma \ref{SME}, getting
\begin{equation}\label{pf10}
\begin{aligned}
\left|\Rmnum{4}_3\right|
\lesssim&\left[|\p_t\nn\eta|_0+|\Delta_*\eta|_0\right]\cdot\big[\|\p_3(J\nna\cdot v)\|_3+\|J(-\bar{\p_1}v^2+\bar{\p_2}v^1)\|_3+|\zeta\Delta_*\zeta|_{\frac{5}{2}}\\
&\qquad\qquad\qquad\qquad+\left\|J\nna\cdot\left(\p_t\theta K\p_3v-v\cdot\nna v-wK\p_3v\right)\right\|_3\big]\\
\lesssim&\left[|\p_t\nn\eta|_0+|\Delta_*\eta|_0\right]\cdot\left(1+\E(t)+\F(t)\right)\D(t).
\end{aligned}
\end{equation}

\par To estimate $\Rmnum{4}_4$, we calculate this term by the equality \eqref{t1}
$$\begin{aligned}
&\int_{-b}^0\left[(\Delta_{\mathcal{A}}-\Delta)(J\nna\cdot v)\right]dx_3\\=&\int_{-b}^0\Big[-\p_1(AK)\p_3\psi+AK\p_3(AK)\p_3\psi-\p_2(BK)\p_3\psi+BK\p_3(BK)\p_3\psi-K^3\p_3^2\theta\p_3\psi\\
&\quad-2AK\p_1\p_3\psi+A^2K^2\p_3^2\psi-2BK\p_2\p_3\psi+B^2K^2\p_3^2\psi-K^2(2\p_3\theta+|\p_3\theta|^2)\p_3^2\psi\Big]dx_3\\
=&\int_{-b}^0\Big[-\p_1(AK)\p_3\psi+AK\p_3(AK)\p_3\psi-\p_2(BK)\p_3\psi+BK\p_3(BK)\p_3\psi-K^3\p_3^2\theta\p_3\psi\\
&\quad+2\p_3(AK)\p_1\psi-\p_3(A^2K^2)\p_3\psi+2\p_3(BK)\p_2\psi-\p_3(B^2K^2)\p_3\psi+\p_3[K^2(2\p_3\theta+|\p_3\theta|^2)]\p_3\psi\Big]dx_3\\
&\quad+\left[-2AK\p_1\psi+A^2K^2\p_3\psi-2BK\p_2\psi+B^2K^2\p_3\psi-K^2(2\p_3\theta+|\p_3\theta|^2)\p_3\psi\right]\mid_{x_3=-b}^{0},
\end{aligned}$$
where $\psi:=J\nna\cdot v$. Then by Sobolev embedding inequalities and the generalized Minkowski inequality, we have
\begin{align*}
&\left|\int_{-b}^0\left[(\Delta_{\mathcal{A}}-\Delta)(J\nna\cdot v)\right]dx_3\right|_{\frac{5}{2}}\\
\lesssim&\Big\|-\p_1(AK)\p_3\psi+AK\p_3(AK)\p_3\psi-\p_2(BK)\p_3\psi+BK\p_3(BK)\p_3\psi-K^3\p_3^2\theta\p_3\psi\\
&+2\p_3(AK)\p_1\psi-\p_3(A^2K^2)\p_3\psi+2\p_3(BK)\p_2\psi-\p_3(B^2K^2)\p_3\psi+\p_3[K^2(2\p_3\theta+|\p_3\theta|^2)]\p_3\psi\Big\|_3\\
&+\left\|-2AK\p_1\psi+A^2K^2\p_3\psi-2BK\p_2\psi+B^2K^2\p_3\psi-K^2(2\p_3\theta+|\p_3\theta|^2)\p_3\psi\right\|_3\\
\lesssim&(\E(t)+\F(t))\D(t),
\end{align*}
and therefore we immediately get the estimate of $\Rmnum{4}_4$ by its definition in \eqref{NNNNN}
\begin{equation}\label{pf12}
\begin{aligned}
|\Rmnum{4}_4|\lesssim&\left[|\p_t\nn\eta|_0+|\Delta_*\eta|_0\right]\cdot\left|\int_{-b}^0\left[(\Delta_{\mathcal{A}}-\Delta)(J\nna\cdot v)\right]dx_3\right|_{\frac{5}{2}}\\
\lesssim&\left[|\p_t\nn\eta|_0+|\Delta_*\eta|_0\right]\cdot (\E(t)+\F(t))\D(t).
\end{aligned}
\end{equation}

\par To bound $\Rmnum{4}_5$, by direct computation, we get
$$\p_t(J\nna\cdot v)-J\nna\cdot\p_tv=\p_tJ\nna\cdot v+J(\p_t\a\n)_*\cdot v,$$
and then
\begin{equation}\label{pf14}
\begin{aligned}
\left|\Rmnum{4}_5\right|
\lesssim&\left[|\p_t\nn\eta|_0+|\Delta_*\eta|_0\right]\cdot\left[\|\p_tJ\nna\cdot v\|_3+\|J(\p_t\a\n)_*\cdot v\|_3\right]\\
\lesssim&\left[|\p_t\nn\eta|_0+|\Delta_*\eta|_0\right]\cdot (\E(t)+\F(t))\D(t).
\end{aligned}
\end{equation}

\par To estimate $\Rmnum{4}_6$, we get by the direct computation
$$J\nna\cdot\Delta_{\mathcal{A}}v-\Delta_{\mathcal{A}}(J\nna\cdot v)=-2\na J\cdot\na(\nna\cdot v)-\Delta_{\mathcal{A}}J\nna\cdot v,$$
and $$\begin{aligned}
\Delta_{\mathcal{A}}J=&\p_3\left[\p_1^2\theta-2AK\p_3\p_1\theta+\p_2^2\theta-2BK\p_3\p_2\theta-(A^2+B^2-1)K^2\p_3^2\theta\right]\\
&+\big[-\nn\cdot(\nn\theta K)\p_3J-(AK\p_3(AK)+BK\p_3(BK)-K\p_3K)\p_3J\\&\qquad+2\p_3(\nn\theta K)\nn\p_3\theta+\p_3\left((A^2+B^2-1)K^2\right)\p_3^2\theta\big]\\
:=&\Rmnum{5}_5+\Rmnum{5}_6.
\end{aligned}$$
Integrating by parts, we have
\begin{equation*}
\begin{aligned}
&\int_{-b}^0\Rmnum{5}_5\nna\cdot vdx_3\\
=&\left\{\left[\p_1^2\theta-2AK\p_3\p_1\theta+\p_2^2\theta-2BK\p_3\p_2\theta-(A^2+B^2-1)K^2\p_3^2\theta\right]\nna\cdot v\right\}\mid_{x_3=-b}^0\\
&-\int_{-b}^{0}\left\{\left[\p_1^2\theta-2AK\p_3\p_1\theta+\p_2^2\theta-2BK\p_3\p_2\theta-(A^2+B^2-1)K^2\p_3^2\theta\right]\p_3\left(\nna\cdot v\right)\right\}dx_3,
\end{aligned}\end{equation*}
which is bounded by
\begin{equation}\label{ttt1}
\begin{aligned}
&\left|\int_{\Gamma}\left\{\left[\p_t\eta-\Delta_*\eta\right]\cdot(1+|\nn|)^{\frac{3}{2}}\nn\int_{-b}^0\Rmnum{5}_5\nna\cdot vdx_3\right\}dx'\right|\\
\lesssim&\left[|\p_t\nn\eta|_0+|\Delta_*\eta|_0\right]\cdot\bigg[\left\|\left[\p_1^2\theta-2AK\p_3\p_1\theta+\p_2^2\theta-2BK\p_3\p_2\theta-(A^2+B^2-1)K^2\p_3^2\theta\right]\nna\cdot v\right\|_3\\
&+\|\left[\p_1^2\theta-2AK\p_3\p_1\theta+\p_2^2\theta-2BK\p_3\p_2\theta-(A^2+B^2-1)K^2\p_3^2\theta\right]\p_3(\nna\cdot v)\|_3\bigg]\\
\lesssim&\left[|\p_t\nn\eta|_0+|\Delta_*\eta|_0\right]\cdot (\E(t)+\F(t))\D(t).
\end{aligned}
\end{equation}
The remaining terms in $\Rmnum{4}_6$ are estimated directly by Sobolev emmbedding inequalities
\begin{equation}\label{ttt2}
\begin{aligned}
&\left|\int_{\Gamma}\left\{\left[\p_t\eta-\Delta_*\eta\right]\cdot(1+|\nn|)^{\frac{3}{2}}\nn\int_{-b}^0\left[-2\na J\cdot\na(\nna\cdot v)-\Rmnum{5}_6\nna\cdot v\right]dx_3\right\}dx'\right|\\
\lesssim&\left[|\p_t\nn\eta|_0+|\Delta_*\eta|_0\right]\cdot\left\|-2\na J\cdot\na(\nna\cdot v)-\Rmnum{5}_6\nna\cdot v\right\|_3\\
\lesssim&\left[|\p_t\nn\eta|_0+|\Delta_*\eta|_0\right]\cdot (\E(t)+\F(t))\D(t).
\end{aligned}
\end{equation}
Therefore we obtain the estimate of $\Rmnum{4}_6$ by adding \eqref{ttt1} and \eqref{ttt2} together
\begin{equation}\label{pf15}
\begin{aligned}
\left|\Rmnum{4}_6\right|
\lesssim \left[|\p_t\nn\eta|_0+|\Delta_*\eta|_0\right]\cdot (\E(t)+\F(t))\D(t).
\end{aligned}
\end{equation}

\par In summary, adding \eqref{pf7}, \eqref{pf9}-\eqref{pf14} and \eqref{pf15} together, we conclude that
\begin{equation*}
\begin{aligned}
&\frac{d}{dt}\int_{\Gamma}\left[|\p_t\eta|^2-\p_t\eta\cdot\Delta_*\eta+|\Delta_*\eta|^2+gb|\nn\eta|^2\right]dx'+\int_{\Gamma}\left[|\nn\p_t\eta|^2+|\Delta_*\eta|^2\right]dx'\\\lesssim &\left[|\p_t\nn\eta|_0+|\Delta_*\eta|_0\right]\cdot (1+\E(t)+\F(t))\D(t),
\end{aligned}
\end{equation*}
which completes the proof of the estimates \eqref{pf} by Cauchy inequality.
\end{proof}

\subsection{Proof of Theorem \ref{a-priori}}
\par In this subsection, we will establish the a-priori estimates by combining the temporal, tangential and normal estimates obtained in Subsection $2.1-2.4$ together.
\begin{proof}[\textbf{Proof of Theorem \ref{a-priori}.}]
	To establish the a-priori estimates, combining the temporal estimates in Proposition \ref{TE}, tangential estimates in Proposition \ref{Tang E} and normal estimates in Proposition \ref{NE} together and choosing $\delta$ small enough, under the a-priori assumption \eqref{a-priori assumption}, we obtain
	\begin{equation}\label{ppp}
	\frac{d}{dt}\E^2(t)+\vartheta\D^2(t)\leq0,
	\end{equation}
	where the energy $\E(t)$ and dissipation $\D(t)$ are defined by \eqref{DED} and the constant $\vartheta>0$ is independent of the time $T$. Therefore integrating \eqref{ppp} with respect to the time $t$ over $[0,T]$, we have
	\begin{equation}\label{ppp1}
	\sup_{0\leq t\leq T}\E^2(t)+\vartheta\int_{0}^{T}\D^2(t)dt\leq \E^2(0).
	\end{equation}
	By the estimates \eqref{pf} on the free boundary $\zeta$ in Proposition \ref{FBE}, we get
	\begin{equation}\label{aaa1}
	\begin{aligned}
	&\sup_{0\leq t\leq T}\int_{\Gamma}\left[|\p_t\eta|^2-\p_t\eta\cdot\Delta_*\eta+|\Delta_*\eta|^2+gb|\nn\eta|^2\right]dx'+\int_{0}^{T}\int_{\Gamma}\left[|\nn\p_t\eta|^2+|\Delta_*\eta|^2\right]dx'dt\\\lesssim &\int_{\Gamma}\left[|\p_t\eta|^2-\p_t\eta\cdot\Delta_*\eta+|\Delta_*\eta|^2+gb|\nn\eta|^2\right](0,x')dx'+\int_{0}^{T}(1+\E(t)+\F(t))\D(t)^2dt.
	\end{aligned}
	\end{equation}
	Note that
	$$\p_t\eta(0,x')=(1+|\nn|)^{\frac{3}{2}}\nn\left[w(0,x',0)-v(0,x',0)\cdot\nn\eta(0,x')\right]$$
	and by \eqref{p5}, \eqref{ppp1}, \eqref{aaa1} and the a-priori assumption \eqref{a-priori assumption}, we obtain
	$$\sup_{0\leq t\leq T}\int_{\Gamma}\left[|\p_t\eta|^2-\p_t\eta\cdot\Delta_*\eta+|\Delta_*\eta|^2+gb|\nn\eta|^2\right]dx'+\int_{0}^{T}\int_{\Gamma}\left[|\nn\p_t\eta|^2+|\Delta_*\eta|^2\right]dx'dt\lesssim \E^2(0)+\F^2(0),$$
	so combining this estimate with \eqref{ppp} and \eqref{ppp1} together, we completes the proof of Theorem \ref{a-priori}.
\end{proof}

\section{Proof of Theorem \ref{th2}}
\par In this section, we will show the proof of Theorem \ref{th2} in the horizontal infinite domain. The a-priori estimates in Theorem \ref{a-priori} also hold in this case, so there exists a unique global solution to equations \eqref{p6}-\eqref{p61} under the assumption of Theorem \ref{th2}. However, the decay rate can not be directly obtained by the a-priori estimates because of the invalidity of the Poincar\'e inequality on the free boundary $\zeta$. Here we adopt the negative-index Sobolev space to overcome this difficulty.
\par We first give the definition of the negative-index Sobolev space.
\begin{definition}\nonumber
	For any $\gamma\in (0,1)$,\\
	\begin{equation*}
	\begin{aligned}
	&v_{-\gamma}(t,x',x_3):=|\nn|^{-\gamma}v(t,x',x_3)=\int_{\R^2}e^{2\pi ix'\cdot\xi}|\xi|^{-\gamma}\hat{v}(t,\xi,x_3)d\xi;\\
	&w_{-\gamma}(t,x',x_3):=|\nn|^{-\gamma}w(t,x',x_3)=\int_{\R^2}e^{2\pi ix'\cdot\xi}|\xi|^{-\gamma}\hat{w}(t,\xi,x_3)d\xi;\\
	&\zeta_{-\gamma}(t,x'):=|\nn|^{-\gamma}\zeta(t,x')=\int_{\R^2}e^{2\pi ix'\cdot\xi}|\xi|^{-\gamma}\hat{\zeta}(t,\xi)d\xi,
	\end{aligned}
	\end{equation*}
	where $\hat{\cdot}$ means the Fourier transformation in $\R^2$.
\end{definition}

\par By means of the negative-index Sobolev space, we give the proof of Theorem \ref{th2}.

\begin{proof}[\textbf{Proof of Theorem \ref{th2}.}]
	By the a-priori estimates in Theorem \ref{a-priori} and the local existence in Proposition \ref{local existence} in the appendix, the global well-posedness is obtained, so we only need to prove the large time behavior.
	
	\par Applying the operator $|\nn|^{-\gamma}$ to the equations \eqref{pl1} and \eqref{pl2}, we have
	\begin{equation}\label{q1}
	\left\{\begin{aligned}
	&\p_tv_{-\gamma}+g\nn\zeta_{-\gamma}-\Delta v_{-\gamma}+f\vec{\kappa}\times v_{-\gamma}=|\nn|^{-\gamma}G_1,\\
	&\nn\cdot v_{-\gamma}+\p_3w_{-\gamma}=|\nn|^{-\gamma}G_2
	\end{aligned}\right.
	\end{equation}
	in the domain $\Omega$ with the boundary conditions
	\begin{equation}\label{q2}
	\left\{\begin{aligned}
	&\p_3v_{-\gamma}=|\nn|^{-\gamma}G_3 \qquad& \text{on } \Gamma,\\
	&v_{-\gamma}=w_{-\gamma}=0\qquad& \text{on } \Sigma_b,\\
	&\p_t\zeta_{-\gamma}=w_{-\gamma}+|\nn|^{-\gamma}G_4 \qquad& \text{on } \Gamma.
	\end{aligned}\right.
	\end{equation}
	Multiplying $\eqref{q1}_1$ by $v_{-\gamma}$ and integrating the resulted equation by parts over $\Omega$, we get
	\begin{equation*}
	\frac{1}{2}\frac{d}{dt}\int_{\Omega}|v_{-\gamma}|^2dx-\int_{\Omega}g\zeta_{-\gamma}\nn\cdot v_{-\gamma}dx+\int_{\Omega}|\n v_{-\gamma}|^2dx-\int_{\Gamma}|\nn|^{-\gamma}G_3\cdot v_{-\gamma}dx'
	=\int_{\Omega}|\nn|^{-\gamma}G_1\cdot v_{-\gamma}dx.
	\end{equation*}
	By the boundary conditions $\eqref{q1}_2$ and $\eqref{q2}_3$, it holds
	\begin{equation*}
	\begin{aligned}
	-\int_{\Omega}g\zeta_{-\gamma}\nn\cdot v_{-\gamma}dx&=\int_{\Omega}g\zeta_{-\gamma}\cdot[\p_3w_{-\gamma}-|\nn|^{-\gamma}G_2]dx=\int_{\Gamma}g\zeta_{-\gamma}\cdot w_{-\gamma}dx'-\int_{\Omega}g\zeta_{-\gamma}\cdot|\nn|^{-\gamma}G_2dx\\
	&=\frac{g}{2}\frac{d}{dt}\int_{\Gamma}|\zeta_{-\gamma}|^2dx'-\int_{\Gamma}g\zeta_{-\gamma}\cdot|\nn|^{-\gamma}G_4dx' -\int_{\Omega}g\zeta_{-\gamma}\cdot|\nn|^{-\gamma}G_2dx.
	\end{aligned}\end{equation*}
	Adding above two equalities together, we obtain
	\begin{equation*}
	\begin{aligned}
	&\frac{1}{2}\frac{d}{dt}\left[\int_{\Omega}|v_{-\gamma}|^2dx+g\int_{\Gamma}|\zeta_{-\gamma}|^2dx'\right]+\int_{\Omega}|\n v_{-\gamma}|^2dx\\
	=&\int_{\Omega}\left(|\nn|^{-\gamma}G_1\cdot v_{-\gamma}+g\zeta_{-\gamma}\cdot|\nn|^{-\gamma}G_2\right)dx+\int_{\Gamma}\left(|\nn|^{-\gamma}G_3\cdot v_{-\gamma}+g\zeta_{-\gamma}\cdot|\nn|^{-\gamma}G_4\right)dx'\\
	 \lesssim&\|v_{-\gamma}\|_1\cdot\left(\left\||\nn|^{-\gamma}G_1\right\|_0+\left||\nn|^{-\gamma}G_3\right|_0\right)+|\zeta_{-\gamma}|_0\cdot\left(\left\||\nn|^{-\gamma}G_2\right\|_0+\left||\nn|^{-\gamma}G_4\right|_0\right),
	\end{aligned}
	\end{equation*}
and by Poincar\'e inequality, it obtain
\begin{equation}\label{q4}
\begin{aligned}
&\frac{d}{dt}\left[\int_{\Omega}|v_{-\gamma}|^2+g\int_{\Gamma}|\zeta_{-\gamma}|^2\right]+\int_{\Omega}|\n v_{-\gamma}|^2\\
\lesssim& \left\||\nn|^{-\gamma}G_1\right\|_0^2+\left||\nn|^{-\gamma}G_3\right|_0^2+|\zeta_{-\gamma}|_0\cdot\left(\left\||\nn|^{-\gamma}G_2\right\|_0+\left||\nn|^{-\gamma}G_4\right|_0\right)\\
\lesssim&\left(\E(t)+|\zeta_{-\gamma}|_0\right)\D^2(t).
\end{aligned}
\end{equation}
In the last inequality, Lemma \ref{LN1} and Lemma \ref{LN2} in the appendix are used to estimate the nonlinear terms.

\par Integrating \eqref{q4} over $[0,t]$ and combining the resulted inequality with the a-priori estimate \eqref{a-prior-est} together, we obtain
\begin{equation*}
\begin{aligned}
\int_{\Gamma}|\zeta_{-\gamma}(t,\cdot)|^2&\leq C\left(\E^2(0)+\F^2(0)+\|v_{-\gamma}(0,\cdot)\|_0^2+|\zeta_{-\gamma}(0,\cdot)|_0^2+\int_{0}^{t}|\zeta_{-\gamma}(s,\cdot)|_0\cdot\D^2(s)ds\right),
\end{aligned}
\end{equation*}
where the positive constant $C$ is independent of the time $t$. Define
$$\varUpsilon(t):=\E^2(0)+\F^2(0)+\|v_{-\gamma}(0,\cdot)\|_0^2+|\zeta_{-\gamma}(0,\cdot)|_0^2+\int_{0}^{t}|\zeta_{-\gamma}(s,\cdot)|_0\cdot\D^2(s)ds,$$
which satisfies
$$\begin{aligned}
\varUpsilon'(t)=|\zeta_{-\gamma}(t,\cdot)|_0\cdot\D^2(t)&\leq \frac{1}{2}\left(|\zeta_{-\gamma}(t,\cdot)|_0^2\cdot\D^2(t)+\D^2(t)\right)\\&\leq \frac{C}{2}\varUpsilon(t)\D^2(t)+\frac{1}{2}\D^2(t).
\end{aligned}
$$
By the Gronwall inequality and the a-priori esitmate \eqref{a-prior-est}, it holds
\begin{equation}\label{q5}
|\zeta_{-\gamma}(t,\cdot)|^2_0\leq C\left(\E^2(0)+\F^2(0)+\|v_{-\gamma}(0,\cdot)\|_0^2+|\zeta_{-\gamma}(0,\cdot)|_0^2\right).
\end{equation}

\par By the interpolation inequality, we have
\begin{equation*}
|\zeta(t,\cdot)|_0\lesssim |\zeta_{-\gamma}(t,\cdot)|_0^{\frac{1}{1+\gamma}}\cdot |\nn\zeta(t,\cdot)|_0^{\frac{\gamma}{1+\gamma}},
\end{equation*}
and combining this with \eqref{q5} together, we get
\begin{equation}\label{q6}
|\zeta(t,\cdot)|_0\leq C|\nn\zeta(t,\cdot)|_0^{\frac{\gamma}{1+\gamma}},
\end{equation}
where the positive constant $C$ only relies on the initial data, independent of the time $t$.
By \eqref{q6}, we immediately get
$$\E(t)\lesssim \D^{\frac{\gamma}{1+\gamma}}(t),$$
and then combining it with the a-priori esitmates \eqref{ETF} together, we obtain
\begin{equation*}
\frac{d}{dt}\E^2(t)+\vartheta\E^{\frac{2(1+\gamma)}{\gamma}}(t)\leq 0
\end{equation*}
for some positive constant $\vartheta$.
Therefore we obtain the decay rate
$$\E(t)\leq C(1+t)^{-\frac{\gamma}{2}},$$
where the constant $C>0$ relies on the initial data and $\gamma$, independent of the time $t$.
\end{proof}

\section{Appendix}
\subsection{The local existence}
\par For the completeness of this paper, we show the local existence to equations \eqref{p1} and \eqref{p2}, which is proved in our recent paper \cite{local1, local}.
\begin{proposition}\label{local existence}
Assume that the initial data $(v_0,\zeta_0)\in H^4(\Omega)\times H^{\frac{9}{2}}(\Gamma)$ in the horizontal periodic domain or horizontal infinite domain and $\sup\limits_{x'\in\Gamma}|\zeta_0(x')|<b$. Suppose the condition \eqref{fbass} and the compatibility \eqref{compatibility} hold. Then there exists a positive constant $T$, only depending on the initial data $(v_0,\zeta_0)$, such that equations \eqref{p3}-\eqref{IC} have an unique local strong solution $(v,\zeta)\in L^{\infty}([0,T],H^4(\Omega))\times L^{\infty}([0,T],H^{\frac{9}{2}}(\Gamma))\cap L^{2}([0,T],H^5(\Omega))\times L^{2}([0,T],H^{\frac{9}{2}}(\Gamma))$, satisfying
	$$\sup\limits_{0\leq t\leq T}\left(\E^2(t)+\F^2(t)\right)+\int_0^T(\D^2(t)+\F^2(t))dt\leq C(\E(0)+\F(0))$$
for some positive constant $C$.
\end{proposition}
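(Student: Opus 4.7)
The plan is to prove local existence via a standard linearization-and-iteration scheme on the reformulated system \eqref{p6}--\eqref{p61}, \eqref{IC} on the fixed reference domain $\Omega$, coupled with the strongly damped wave equation \eqref{nfe} for the free surface obtained by integrating the horizontal momentum equations over the vertical direction.

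\par First I would set up an approximating sequence $\{(v^{(n)},\zeta^{(n)})\}_{n\geq 0}$, initialized by $(v^{(0)},\zeta^{(0)})=(v_0,\zeta_0)$ extended trivially in $t$. At step $n+1$, I freeze the coefficients $\a^{(n)}, J^{(n)}, K^{(n)}$ built from $\theta^{(n)}=\chi(x_3)\tilde{\zeta}^{(n)}$ via the harmonic extension, and then solve two essentially decoupled linear problems: a parabolic Stokes-type system for $v^{(n+1)}$ on $\Omega$ (with viscosity $-\Delta$ and source terms/boundary data computed from $(v^{(n)},\zeta^{(n)})$ through \eqref{pl1}--\eqref{NDDD}), and the strongly damped wave equation
$$\p_t^2\zeta^{(n+1)}-gb\Delta_*\zeta^{(n+1)}-\Delta_*\p_t\zeta^{(n+1)}=\Phi^{(n)}\qquad\text{on }\Gamma,$$
with initial data $(\zeta_0,\ w(0,\cdot,0)-v_0\cdot\nn\zeta_0)$, where $\Phi^{(n)}$ is the forcing \eqref{pfn1}--\eqref{pfn3} evaluated at $(v^{(n)},\zeta^{(n)})$. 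The vertical component $w^{(n+1)}$ is recovered from \eqref{p5} and the pressure from $P^{(n+1)}=P_0+g\zeta^{(n+1)}$.

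\par Standard parabolic theory (Galerkin or semigroup methods) supplies $v^{(n+1)}$ at the regularity level dictated by $\E$ and $\D$ in \eqref{DED}, while classical theory for strongly damped wave equations on $\T^2$ or $\R^2$ supplies $\zeta^{(n+1)}\in L^\infty([0,T],H^{9/2}(\Gamma))$ once $\Phi^{(n)}$ has enough regularity. The uniform a priori estimates I would then propagate along the iteration mirror the decomposition of Section~2, namely the temporal (Proposition~\ref{TE}), tangential (Proposition~\ref{Tang E}), normal (Proposition~\ref{NE}) and wave--damping surface estimates (Proposition~\ref{FBE}). The hypothesis $\sup_{x'\in\Gamma}|\zeta_0(x')|<b$ ensures $J^{(n)}\geq c>0$ on $[0,T]$ for $T$ small, keeping the quasilinear coefficient structure uniformly nondegenerate, and the bounds close into
$$\sup_{0\leq t\leq T}(\E^2(t)+\F^2(t))+\int_0^T(\D^2(t)+\F^2(t))dt\leq C(\E(0)+\F(0))$$
uniformly in $n$, provided that the compatibility conditions \eqref{compatibility} are propagated through each linear step.

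\par To extract the limit, I would show that the iteration map is a contraction in a weaker norm (for instance $v$ with one fewer spatial derivative and $\zeta$ in $H^{7/2}$) by subtracting consecutive systems, running the same energy--dissipation machinery on the differences, and invoking Gronwall's lemma on a short time interval; uniqueness follows from the same argument applied to two candidate solutions. The main obstacle I anticipate is reconciling the $H^{9/2}$ regularity of $\zeta$ with the $H^4$ regularity of $v$: since $w=-\int_{-b}^{x_3}J\nna\cdot v\,d\tau$ is one derivative less regular than $v$, the kinematic boundary condition alone cannot deliver $\zeta^{(n+1)}\in H^{9/2}$, and one must exploit the wave-with-damping structure (together with the commutator estimate of Lemma~\ref{CE} for $(1+|\nn|)^{3/2}\nn$) to recover the missing half derivative. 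A secondary technical point is preserving the compatibility conditions \eqref{compatibility} through the linear iteration, which may require adjusting the initial data of the linearized problem at each step and absorbing the resulting error terms into the energy bounds.
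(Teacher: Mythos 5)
Your plan coincides with the paper's own (sketched) argument: the paper proves Proposition \ref{local existence} by the contraction mapping principle, constructing the approximate solutions via the strongly damped wave equation \eqref{nfe} for the free surface to recover the extra half derivative on $\zeta$, deferring the details to the companion papers \cite{local1,local}. Your iteration with frozen coefficients, the damped-wave step for $\zeta^{(n+1)}$, and contraction in a lower-order norm is essentially that same route.
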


\begin{remark}
	The proof of Proposition \ref{local existence} is showed by the simple tool, contraction mapping principle. The essential idea is to construct the approximate solution via the new derived wave equation \eqref{nfe} with strong damping term, seeing the details in our recent paper \cite{local1,local}.
\end{remark}

\subsection{Analytic tools}
In the following, we will list some useful analytic tools which are applied to estimate the nonlinear terms.
\begin{lemma}\label{SME}
The following results hold on the smooth domain $\Gamma\subset\R^n$.
\begin{enumerate}
	\item Let $0\leq r\leq s_1\leq s_2$ be such that $s_1>\frac{n}{2}$. Let $f\in H^{s_1}(\Gamma)$ and $g\in H^{s_2}(\Gamma)$. Then $f\cdot g\in H^r(\Gamma)$ and
	$$\|f\cdot g\|_{H^r}\lesssim \|f\|_{H^{s_1}}\cdot\|g\|_{H^{s_2}}. $$
	\item Let $0\leq r\leq s_1\leq s_2$ be such that $s_2>r+\frac{n}{2}$. Let $f\in H^{s_1}(\Gamma)$ and $g\in H^{s_2}(\Gamma)$. Then $f\cdot g\in H^r(\Gamma)$ and
	$$\|f\cdot g\|_{H^r}\lesssim \|f\|_{H^{s_1}}\cdot\|g\|_{H^{s_2}}. $$
	\item Let $0\leq r\leq s_1\leq s_2$ be such that $s_2>r+\frac{n}{2}$. Let $f\in H^{-r}(\Gamma)$ and $g\in H^{s_2}(\Gamma)$. Then $f\cdot g\in H^{-s_1}(\Gamma)$ and
	$$\|f\cdot g\|_{H^{-s_1}}\lesssim \|f\|_{H^{-r}}\cdot\|g\|_{H^{s_2}}. $$
\end{enumerate}
\end{lemma}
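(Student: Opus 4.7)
The plan is to reduce to the model case $\Gamma = \mathbb{R}^n$ by standard Stein extension on the smooth domain and then invoke the Littlewood--Paley / Bony paraproduct machinery. Write $fg = T_f g + T_g f + R(f,g)$, where $T_h u := \sum_j S_{j-2} h \cdot \Delta_j u$ is the paraproduct and $R(f,g) := \sum_{|j-k|\le 1} \Delta_j f\,\Delta_k g$ is the high-high remainder, and use the standard bounds
\[
\|T_h u\|_{H^\sigma}\lesssim \|h\|_{L^\infty}\|u\|_{H^\sigma},\qquad \|R(f,g)\|_{H^{s_1+s_2-n/2}}\lesssim \|f\|_{H^{s_1}}\|g\|_{H^{s_2}}\ (s_1+s_2>0),
\]
together with the Sobolev embeddings $H^{s}\hookrightarrow L^\infty$ for $s>n/2$ and $H^{s}\hookrightarrow W^{k,\infty}$ for $s>k+n/2$.

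Part~(1) is the classical Schauder/Moser algebra estimate: since $s_1>n/2$ gives $H^{s_1}\hookrightarrow L^\infty$, $H^{s_1}(\Gamma)$ is a Banach algebra, so
\[
\|fg\|_{H^r}\le \|fg\|_{H^{s_1}}\lesssim \|f\|_{H^{s_1}}\|g\|_{H^{s_1}}\le \|f\|_{H^{s_1}}\|g\|_{H^{s_2}},
\]
where the last inequality uses $s_2\ge s_1$. One can also recover this directly from the paraproduct decomposition by putting either factor into $L^\infty$.

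For part~(2), the hypothesis $s_2>r+n/2$ yields $H^{s_2}\hookrightarrow W^{r,\infty}$, which lets one distribute derivatives via Leibniz: at most $r$ derivatives land on $g$ (absorbed in $L^\infty$) and the remainder on $f$ (absorbed in $L^2$). In paraproduct language, $T_g f$ and $R(f,g)$ exploit the $W^{r,\infty}$ control of $g$ to give $\|T_g f\|_{H^r}+\|R(f,g)\|_{H^r}\lesssim \|g\|_{H^{s_2}}\|f\|_{H^{s_1}}$, while $T_f g$ inherits $s_1\ge r$ derivatives from $f$ combined with the low-frequency $L^\infty$ behaviour of $g$. Summing yields $\|fg\|_{H^r}\lesssim \|f\|_{H^{s_1}}\|g\|_{H^{s_2}}$. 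Part~(3) then follows by duality: since $H^{-s_1}=(H^{s_1})^*$, one writes
\[
\|fg\|_{H^{-s_1}} = \sup_{\|\phi\|_{H^{s_1}}\le 1}|\langle f,g\phi\rangle|\le \|f\|_{H^{-r}}\cdot \sup_{\|\phi\|_{H^{s_1}}\le 1}\|g\phi\|_{H^r},
\]
and applying part~(2) (with the exact same triple $(r,s_1,s_2)$) to the pair $(\phi,g)$ gives $\|g\phi\|_{H^r}\lesssim \|g\|_{H^{s_2}}\|\phi\|_{H^{s_1}}$, completing the bound.

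The main obstacle is the sharp threshold accounting in part~(2): one has to verify that the strict inequality $s_2>r+n/2$ really is enough to handle the high-high remainder $R(f,g)$ in the borderline case $r=s_1$, which requires summing Littlewood--Paley pieces in the limiting Besov space $B^{s_1+s_2-n/2}_{2,\infty}$ and embedding back into $H^{s_1}$. Away from this borderline every step is a routine consequence of the paraproduct bounds.
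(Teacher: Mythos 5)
The paper states this lemma in its appendix as a standard analytic tool without giving a proof or a reference, so there is no paper proof to compare against; your job was to supply one. Your overall strategy --- extend to $\mathbb{R}^n$, use the Banach algebra property of $H^{s_1}$ for $s_1>n/2$ in part (1), Bony's decomposition in part (2), and duality to derive part (3) from part (2) --- is the standard and correct route, and the reduction of (3) to (2) with the identical triple $(r,s_1,s_2)$ is exactly right.

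Two details in your treatment of part (2) are garbled, though neither breaks the argument. First, your sentence about $T_f g$ has the roles of $f$ and $g$ reversed: in $T_f g=\sum_j S_{j-2}f\,\Delta_j g$, $f$ is the \emph{low}-frequency factor and $g$ the \emph{high}-frequency one, so the $H^r$ regularity is drawn from $g$ (available since $s_2\ge s_1\ge r$), not "inherited from $f$," and it is $f$, not $g$, that contributes a low-frequency $L^\infty$ (or $B^{s_1-n/2}_{\infty,\infty}$ when $s_1\le n/2$) factor. Concretely: if $s_1>n/2$ use $\|T_fg\|_{H^r}\lesssim\|f\|_{L^\infty}\|g\|_{H^r}$; if $s_1\le n/2$ use $\|T_fg\|_{H^{s_1+s_2-n/2}}\lesssim\|f\|_{H^{s_1}}\|g\|_{H^{s_2}}$, and $s_1+s_2-n/2>r$ follows from $s_2>r+\tfrac n2$ together with $s_1\ge 0$. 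Second, the borderline you flag for $R(f,g)$ is not actually present: since $s_1+s_2>0$ (indeed $s_2>n/2$), the standard remainder estimate puts $R(f,g)$ in $B^{s_1+s_2-n/2}_{2,1}\hookrightarrow H^{s_1+s_2-n/2}$, not merely $B^{s_1+s_2-n/2}_{2,\infty}$, and again $s_1+s_2-n/2>r$ strictly, so no limiting Besov embedding is needed even when $r=s_1$. With those two sentences corrected the proof is complete.
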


\par The important inequalities are described in the next two lemmas to be applied in the negative-index Sobolev space, which can be proved by Hardy-Littlewood-Sobolev inequality, seeing the details in \cite{GuoY1} and the referrences therein.
\begin{lemma}\label{LN1}
Let $\gamma\in(0,1)$. Then
\begin{itemize}[(i)]
	\item If $f\in L^2(\Omega)$, $g$ and $\nn g\in H^1(\Omega)$, we have
	$$\left\||\nn|^{-\gamma}(f\cdot g)\right\|_0\lesssim \|f\|_0\cdot\|g\|_1^{\gamma}\cdot\|\nn g\|_1^{1-\gamma}.$$
	\item If $f\in L^2(\Gamma)$ and $g\in H^1(\Gamma)$, we have
	$$\left||\nn|^{-\gamma}(f\cdot g)\right|_0\lesssim |f|_0\cdot|g|_0^{\gamma}\cdot|\nn g|_0^{1-\gamma}.$$
\end{itemize}
\end{lemma}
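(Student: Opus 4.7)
\smallskip

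\noindent\textbf{Proof proposal.} My plan is to prove part (ii) on $\Gamma=\mathbb{R}^2$ first by a combination of the Hardy--Littlewood--Sobolev (HLS) inequality, H\"older and Gagliardo--Nirenberg, and then to deduce part (i) by slicing in the vertical variable and invoking the one-dimensional embedding $H^1([-b,0])\hookrightarrow L^\infty([-b,0])$ for the map $x_3\mapsto \|g(\cdot,x_3)\|_{L^2(\Gamma)}$.

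For part (ii), note that $|\nabla_*|^{-\gamma}$ is (up to a constant) the Riesz potential $I_\gamma$ of order $\gamma$ on $\mathbb{R}^2$. HLS gives, for the exponent $p$ defined by $\tfrac{1}{2}=\tfrac{1}{p}-\tfrac{\gamma}{2}$, i.e.\ $p=\tfrac{2}{1+\gamma}$,
\[
\bigl||\nabla_*|^{-\gamma}(fg)\bigr|_0 \;=\;\|I_\gamma(fg)\|_{L^2(\mathbb{R}^2)}\;\lesssim\;\|fg\|_{L^p(\mathbb{R}^2)}.
\]
H\"older with the split $\tfrac{1}{p}=\tfrac{1}{2}+\tfrac{\gamma}{2}$ yields $\|fg\|_{L^p}\leq |f|_0\,\|g\|_{L^{2/\gamma}(\mathbb{R}^2)}$. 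Finally, the two-dimensional Gagliardo--Nirenberg inequality with $q=2/\gamma$, so that $\tfrac{2}{q}=\gamma$, gives
\[
\|g\|_{L^{2/\gamma}(\mathbb{R}^2)}\;\lesssim\;|g|_0^{\gamma}\,|\nabla_* g|_0^{1-\gamma},
\]
and chaining these three estimates produces exactly the bound in (ii). (If one is concerned about endpoints, the HLS inequality is safe since $\gamma\in(0,1)$ keeps $p\in(1,2)$.)

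For part (i), I would apply (ii) on each horizontal slice. Since $|\nabla_*|^{-\gamma}$ acts only in $x'$,
\[
\bigl\||\nabla_*|^{-\gamma}(fg)\bigr\|_0^2 \;=\;\int_{-b}^{0}\bigl||\nabla_*|^{-\gamma}(f(\cdot,x_3)g(\cdot,x_3))\bigr|_0^2\,dx_3,
\]
and part (ii) bounds the integrand by $\|f(\cdot,x_3)\|_{L^2_{x'}}^2\,\|g(\cdot,x_3)\|_{L^2_{x'}}^{2\gamma}\,\|\nabla_* g(\cdot,x_3)\|_{L^2_{x'}}^{2(1-\gamma)}$. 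Pulling the two $g$-factors out in $L^\infty_{x_3}$ reduces the remaining integral to $\|f\|_0^2$, so it only remains to check
\[
\|g\|_{L^\infty_{x_3}L^2_{x'}}\;\lesssim\;\|g\|_1,\qquad \|\nabla_* g\|_{L^\infty_{x_3}L^2_{x'}}\;\lesssim\;\|\nabla_* g\|_1.
\]
This is the standard one-dimensional trace/embedding: writing $\|g(\cdot,x_3)\|_{L^2(\Gamma)}^2-\|g(\cdot,\tau)\|_{L^2(\Gamma)}^2=2\int_\tau^{x_3}\!\!\int_{\Gamma}g\,\partial_3 g\,dx'\,ds$ and using Cauchy--Schwarz bounds the left-hand side by $\|g\|_0(\|g\|_0+2\|\partial_3 g\|_0)\lesssim\|g\|_1^2$, and similarly for $\nabla_* g$. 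Combining these slicewise yields the claim in (i).

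The only subtle point is really the optimality of the exponents. The HLS step forces $p=2/(1+\gamma)$, and this precisely matches the H\"older split that makes a pure $L^2$ norm of $f$ appear while leaving a single $L^{2/\gamma}$ norm of $g$; the two-dimensional GN inequality with the correct interpolation index is what converts this into the asymmetric $|g|_0^{\gamma}|\nabla_* g|_0^{1-\gamma}$ bound. Once this chain is set up, the lifting from (ii) to (i) is routine and the $H^1(\Omega)\hookrightarrow L^\infty_{x_3}L^2_{x'}$ embedding is the only place a global domain property of $\Omega$ is used; this gives the appearance of $\|g\|_1$ and $\|\nabla_* g\|_1$ (rather than the stronger $L^2$ norms allowed in the $\mathbb{R}^2$ statement) on the right-hand side.
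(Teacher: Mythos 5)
Your proof is correct, and it follows essentially the same route the paper indicates: the paper does not give a proof of Lemma \ref{LN1} but simply remarks that it ``can be proved by Hardy--Littlewood--Sobolev inequality'' and cites \cite{GuoY1}, and your chain HLS $\to$ H\"older $\to$ Gagliardo--Nirenberg on $\Gamma=\mathbb{R}^2$, followed by the slicewise lift using $\|h\|_{L^\infty_{x_3}L^2_{x'}}\lesssim\|h\|_{H^1(\Omega)}$, is precisely that argument with the exponents ($p=2/(1+\gamma)$, $q=2/\gamma$, interpolation index $1-\gamma$) worked out correctly.
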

\begin{lemma}\label{LN2}
Let $\gamma\in(0,1)$. If $f\in H^k(\Omega)$ for any $k\geq1$, then
$$\left\||\nn|^{-\gamma}\nn^kf\right\|_0\lesssim \|\nn^{k-1}f\|_0^{\gamma}\cdot \|\nn^kf\|_0^{1-\gamma}.$$
\end{lemma}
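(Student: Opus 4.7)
The plan is to pass to the horizontal Fourier side and apply H\"older's inequality twice. Since $\Omega=\Gamma\times(-b,0)$ with $\Gamma=\T^2$ or $\R^2$, and the operator $|\nn|^{-\gamma}$ acts only in the horizontal variables, I would take the horizontal Fourier transform $\hat{f}(\xi,x_3)$ for each fixed $x_3$ (Fourier series on $\T^2$, Fourier integral on $\R^2$). By Plancherel in $x'$ followed by integration in $x_3$,
\begin{equation*}
\||\nn|^{-\gamma}\nn^k f\|_0^2 \asymp \int_{-b}^{0}\int_{\Gamma^*}|\xi|^{2(k-\gamma)}|\hat{f}(\xi,x_3)|^2\,d\xi\,dx_3,
\end{equation*}
where the combined Fourier multiplier for $|\nn|^{-\gamma}\nn^k$ has magnitude comparable to $|\xi|^{k-\gamma}$. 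Since $k\ge 1$ and $\gamma\in(0,1)$ give $k-\gamma>0$, this multiplier vanishes at $\xi=0$, so in the periodic case the zero mode causes no trouble.

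The heart of the argument is the log-convex split $k-\gamma=\gamma(k-1)+(1-\gamma)k$, which yields the pointwise factorization
\begin{equation*}
|\xi|^{2(k-\gamma)}|\hat{f}|^2 = \bigl(|\xi|^{2(k-1)}|\hat{f}|^2\bigr)^{\gamma}\bigl(|\xi|^{2k}|\hat{f}|^2\bigr)^{1-\gamma}.
\end{equation*}
Applying H\"older's inequality with conjugate exponents $1/\gamma$ and $1/(1-\gamma)$ first in the frequency variable $\xi$ (for each fixed $x_3$), and then again in $x_3$, I would obtain
\begin{equation*}
\int_{-b}^{0}\int|\xi|^{2(k-\gamma)}|\hat{f}|^2\,d\xi\,dx_3 \leq \Bigl(\int_{-b}^{0}\int|\xi|^{2(k-1)}|\hat{f}|^2\,d\xi\,dx_3\Bigr)^{\gamma}\Bigl(\int_{-b}^{0}\int|\xi|^{2k}|\hat{f}|^2\,d\xi\,dx_3\Bigr)^{1-\gamma}.
\end{equation*}
A final use of Plancherel converts the right-hand side to $\|\nn^{k-1}f\|_0^{2\gamma}\|\nn^k f\|_0^{2(1-\gamma)}$, and taking square roots gives the claim.

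The only minor subtlety is that $\nn^k f$ is a tensor of $k$-th order horizontal derivatives rather than a scalar quantity. This is handled either componentwise, by running the scalar argument on each $\p_1^{\alpha_1}\p_2^{\alpha_2}f$ with $\alpha_1+\alpha_2=k$ and summing, or in one stroke via the elementary equivalence $\sum_{\alpha_1+\alpha_2=k}|\xi_1|^{2\alpha_1}|\xi_2|^{2\alpha_2}\asymp|\xi|^{2k}$ (up to constants depending only on $k$). I do not foresee any real obstacle; the argument is essentially the Fourier-multiplier form of the log-convexity of weighted $L^2$ norms, with the vertical variable $x_3$ playing a passive role throughout.
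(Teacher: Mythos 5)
Your proof is correct, and it is the natural argument for this lemma. The paper itself offers no proof but instead refers to Guo–Tice \cite{GuoY1} and mentions the Hardy–Littlewood–Sobolev inequality as the relevant tool; that remark is really aimed at Lemma~\ref{LN1}, where the presence of a \emph{product} $f\cdot g$ makes HLS (via the convolution structure on the Fourier side) the right device. For Lemma~\ref{LN2}, which is a pure Fourier-multiplier interpolation, your Plancherel-plus-H\"older route is the standard and more direct proof, and it is effectively what one finds in the cited reference for this kind of estimate. Two small cosmetic points: you do not need to apply H\"older in two stages (first in $\xi$, then in $x_3$) — a single application of H\"older with exponents $1/\gamma$ and $1/(1-\gamma)$ over the product measure $d\xi\,dx_3$ gives the bound in one step; and the observation that the symbol $|\xi|^{k-\gamma}$ vanishes at $\xi=0$, while correct, is not really needed here because Lemma~\ref{LN2} is only invoked in the horizontal whole-space case ($\Gamma=\R^2$). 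The log-convex split $k-\gamma=\gamma(k-1)+(1-\gamma)k$, the pointwise factorization, and the componentwise treatment of the $k$-th order tensor $\nn^k f$ via $\sum_{\alpha_1+\alpha_2=k}\xi_1^{2\alpha_1}\xi_2^{2\alpha_2}\asymp|\xi|^{2k}$ are all handled correctly.
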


\section*{Acknowledgements}
The authors would like to thank the anonymous referees for their valuable comments and many useful suggestions which helped to improve the exposition of the current paper.
The authors would like to thank Professor Zhouping Xin for helpful suggestions and discussions.
The research of Hai-Liang Li's work was supported partially by the National Natural Science Foundation of China
(No. 11931010, 12226326, 12226327), by the key research project of Academy
for Multidisciplinary Studies, Capital Normal University, and by the Capacity
Building for Sci-Tech Innovation-Fundamental Scientific Research Funds (No.007/20530290068). Liang's work is partially supported by the National Science Foundation of China
(No. 11701053) and the Fundamental Research Funds for the Central Universities(No.0903005203477, 2020CDJQY-A040 and 2020CDJQY-Z001).

\bibliographystyle{amsplain}

\begin{thebibliography}{10}
	
	
\bibitem{AG}
A\'zerad, P.; Guill\'en, F.: \emph{Mathematical justification of the hydrostatic approximation in the primitive equations of geophysical fluid dynamics}. SIAM J. Math. Anal., 33 (2001), 847–859.

\bibitem{Beale2}
Beale, J.T.: \emph{The initial value problem for the Navier-Stokes equations with a free surface}. Comm. Pure Appl.
Math. 34 (1981), No.3, 359-392.


\bibitem{B}
Bresch, D.; Guill\'{e}n-Gonz\'{a}lez, F.; Masmoudi, N.; Rodr\'{\i}guez-Bellido, M. A.: \emph{On the uniqueness of weak solutions of the two-dimensional primitive equations}. Differential Integral Equations, 16 (2003), 77–94.


\bibitem{BKL}
Bresch, D.; Kazhikhov, A.; Lemoine, J.: \emph{On the two-dimensional hydrostatic Navier-Stokes equations}. SIAM J. Math. Anal. 36 (2004/05), no. 3, 796–814 (electronic).

\bibitem{Bryan}
Bryan, K.: \emph{A numerical method for the study of the circulation of the world ocean}. J. Comp. Phys. 4 (1969), 347-376.

\bibitem{CIT}
Cao, C.S.; Ibrahim, S.; Nakanishi, K.; Titi, E.S.: \emph{Finite-time blowup for the inviscid primitive equations of oceanic and atmospheric dynamics}. Comm. Math. Phys. 337 (2015), no. 2, 473–482.


\bibitem{CLT1}
Cao, C.S.; Li, J.K.; Titi, E.S.: \emph{Local and global well-posedness of strong solutions to the 3D primitive equations with vertical eddy diffusivity}. Arch. Rational Mech. Anal., 214 (2014), 35–76.
\bibitem{CLT2}
Cao, C.S.; Li, J.K.; Titi, E.S.: \emph{Global well-posedness of strong solutions to the 3D primitive equations with horizontal eddy diffusivity}. J. Differential Equations, 257 (2014), 4108–4132.
\bibitem{CLT3}
Cao, C.S.; Li, J.K.; Titi, E.S.: \emph{Global well-posedness of the 3D primitive equations with only horizontal viscosity and diffusivity}. Comm. Pure Appl. Math., 69 (2016), no. 8, 1492–1531.
\bibitem{CLT4}
Cao, C.S.; Li, J.K.; Titi, E.S.: \emph{Strong solutions to the 3D primitive equations with horizontal dissipation: near $H^1$ initial data}. J. Funct. Anal. 272 (2017), no. 11, 4606–4641.
\bibitem{CLT5}
Cao, C.S.; Li, J.K.; Titi, E.S.: \emph{Global well-posedness of the 3D primitive equations with horizontal viscosities and vertical diffusion}. Phys. D 412 (2020), 132606, 25pp.
\bibitem{CT}
Cao, C.S.; Titi, E.S.: \emph{Global well-posedness of the three-dimensional viscous primitive equations of large scale ocean and atmosphere dynamics}. Ann. of Math., 166 (2007), 245–267.
\bibitem{CLT6}
Cao, C.S.; Titi, E.S.: \emph{Global well-posedness of the 3D primitive equations with partial vertical turbulence mixing heat diffusion}. Comm. Math. Phys., 310 (2012), 537–568.

\bibitem{Cro}
Crowley, W.P.: \emph{A numerical model for viscous, free-surface, barotropic wind driven ocean circulations}. J. Comp. Phys. 5 (1970), 139-168.

\bibitem{F}
Furukawa, K.; Giga, Y.; Hieber, M.; Hussein, A.; Kashiwabara, T.; Wrona, M.: \emph{Rigorous justification of the hydrostatic approximation for the primitive equations by scaled Navier-Stokes equations}. Nonlinearity 33 (2020), no. 12, 6502–6516.


\bibitem{GG}
Guill\'{e}n-Gonz\'{a}lez, F.; Masmoudi, N.; Rodr\'{\i}guez-Bellido, M. A.: \emph{Anisotropic estimates and strong solutions of the primitive equations}. Differ. Integral Equ., 14 (2001), 1381–1408.

\bibitem{GH}
Guo, B.L.; Huang, D.W.: \emph{Infinite-dimensional dynamical systems in atmospheric and oceanic science}. World Scientific Publishing Co. Pte. Ltd., Hackensack, NJ; Zhejiang Science and Technology Publishing House, Hangzhou, 2014.

\bibitem{GuoY1}
Guo, Y.; Tice, I.: \emph{Decay of viscous surface waves without surface tension in horizontally infinite domains}. Anal.PDE 6 (2013), No.6, 1429-1533.

\bibitem{GuoY2}
Guo, Y.; Tice, I.: \emph{Local well-posedness of the viscous surface wave problem without surface tension}. Anal.PDE 6 (2013), No.2, 287-369.

\bibitem{GuoY3}
Guo, Y.; Tice, I.: \emph{Almost exponential decay of periodic viscous surface waves without surface tension}. Arch. Ration. Mech. Anal. 207 (2013), No.2, 459-531.

\bibitem{Hataya}
Hataya, Y.: \emph{Decaying solution of a Navier-Stokes flow without surface tension}. J. Math. Kyoto Univ. 49 (2009), No.4, 691-717.

\bibitem{HT1}
Honda, H.; Tani, A.: \emph{Small-time existence of a strong solution of primitive equations for the atmosphere}. Adv. Math. Sci. Appl. 20 (2010), no. 2, 547–583.
\bibitem{HT2}
Honda, H.; Tani, A.: \emph{Small-time existence of a strong solution of primitive equations for the ocean}. Tokyo J. Math. 35 (2012), no. 1, 97–138.

\bibitem{IKZ}
Ignatova, M.; Kukavica, I.; Ziane, M.: \emph{Local existence of solutions to the free boundary value problem for the primitive equations of the ocean}. J. Math. Phys. 53 (2012), no. 10, 103101, 17 pp.

\bibitem{KPRZ}
Kukavica, I.; Pei, Y.; Rusin, W.; Ziane, M.: \emph{Primitive equations with continuous initial data}. Nonlinearity, 27 (2014), 1135–1155.

\bibitem{K}
Kobelkov, G.M.: \emph{Existence of a solution ‘in the large’ for the 3D large-scale ocean dynamics equations}. C. R. Math. Acad. Sci. Paris 343 (2006), no. 4, 283–286.

\bibitem{local1}
Kong, H.; Liang, C.C.: \emph{Well-posedness to primitive equations with free boundary}. submitted.

\bibitem{local}
Li, H.-L.; Liang, C.C.: \emph{The uniform regularity to the Navier-Stokes equations with the free boundary under the hydrostatic balance}. preprint.
	
\bibitem{LT}
Li, J.K.; Titi, E.S.: \emph{Existence and uniqueness of weak solutions to viscous primitive equations for a certain class of discontinuous initial data}. SIAM J. Math. Anal. 49 (2017), no. 1, 1–28.

\bibitem{LT2}
Li, J.K.; Titi, E.S.: \emph{The primitive equations as the small aspect ratio limit of the Navier-Stokes equations: Rigorous justification of the hydrostatic approximation}. J. Math. Pures Appl. (9) 124 (2019), 30–58.
	
\bibitem{LTW1}
Lions, J.L.; Temam, R.; Wang, S.: \emph{New formulations of the primitive equations of the atmosphere and appliations}. Nonlinearity, 5 (1992), 237–288.
\bibitem{LTW2}
Lions, J.L.; Temam, R.; Wang, S.: \emph{On the equations of the large-scale ocean}. Nonlinearity, 5 (1992), 1007–1053.
\bibitem{LTWF}
Lions, J.L.; Temam, R.; Wang, S.: \emph{Free boundary value problems for the coupled models of the atmosphere and the ocean}. C. R. Acad. Sci. Paris S\'{e}r. I Math. 318 (1994), no. 12, 1165–1171.
\bibitem{LTW3} Lions, J.L.; Temam, R.; Wang, S.: \emph{Mathematical study of the coupled models of atmosphere and ocean (CAO III)}. J. Math. Pures Appl., 74 (1995), 105–163.

\bibitem{Majda}
Majda, A.J.: \emph{Introduction to PDEs and Waves for the Atmosphere and Ocean (Courant Lecture Notes in Mathematics vol 9)}. Providence, RI: American Mathematical Society, 2003.


\bibitem{Ped}
Pedlosky, J.: \emph{Geophysical Fluid Dynamics}. Springer-Verlag, New York, 1987.

\bibitem{PTZ}
Petcu, M.; Temam, R.; Ziane, M.: \emph{Some mathematical problems in geophysical fluid dynamics}. Elsevier: Handbook of Numerical Analysis, 14 (2009), 577–750.


\bibitem{Ric}
Richardson, L.F.: \emph{Weather Prediction by Numerical Process}. Cambridge Univ. Press, Cambridge 1922 (reprint, Dover, New York, 1988).

\bibitem{Sal}
Salmon, R.: \emph{Lectures on Geophysical Fluid Dynamics}. Oxford Univ. Press, New York, 1998.

\bibitem{Sma}
Smagorinsky, J.: \emph{General circulation experiments with the primitive equations. 1. The basic experiment}. Mon. Wea. Rev. 91 (1963), 98-164.


\bibitem{Sol}
Solonnikov, V.A.: \emph{Solvability of a problem on the motion of a viscous incompressible fluid that is bounded by a free surface}. Math. USSR-Izv. 11 (6), 1323–1358 (1977).

\bibitem{T}
Tachim Medjo, T.: \emph{On the uniqueness of $z$-weak solutions of the threedimensional primitive equations of the ocean}. Nonlinear Anal. Real World Appl., 11 (2010), 1413–1421.

\bibitem{TZ}
Temam, R.; Ziane, M.: \emph{Some mathematical problems in geophysical fluid dynamics}. Handbook of mathematical fluid dynamics, Vol. III, 535–657, North-Holland, Amsterdam, 2004.

\bibitem{W}
Wong, T.K.: \emph{Blowup of solutions of the hydrostatic Euler equations}. Proc. Amer. Math. Soc. 143 (2015), no. 3, 1119–1125.

\bibitem{wu}
Wu, L.: \emph{Well-posedness and decay of the viscous surface wave}. SIAM J. Math. Anal. 46 (2014), no. 3, 2084–2135.

\bibitem{WZ1}
Zaj\c{a}czkowski, W.M.: \emph{On nonstationary motion of a compressible barotropic viscous fluid bounded by a free surface}. Dissertationes Math. (Rozprawy Mat.) 324 (1993), 101pp.

\bibitem{WZ2}
Zaj\c{a}czkowski W.M.: \emph{On nonstationary motion of a compressible barotropic viscous capillary fluid bounded by a free surface}.
SIAM J. Math. Anal. 25 (1994), No.1, 1-84.

\bibitem{zeng}
Zeng, Q.C.: \emph{Mathematical and Physical Foundations of Numerical Weather Prediction}, Science Press, Beijing, 1979 (in Chinese).

\end{thebibliography}

\end{document}